\declaretheoremstyle[headfont=\normalfont, bodyfont=\itshape, postheadspace=1em]{assumpstyle}
\declaretheorem[style=assumpstyle]{condition}
\newtheorem{algorithm}{Algorithm}[section]
\newtheorem{theorem}[algorithm]{Theorem}
\newtheorem{lemma}[algorithm]{Lemma}
\newtheorem{proposition}[algorithm]{Proposition}
\newtheorem{corollary}[algorithm]{Corollary}
\newtheorem{remark}[algorithm]{Remark}
\newcommand{\R}{\mathbb{R}}
\newcommand{\X}{\mathbf{X}}
\newcommand{\Y}{\mathbf{Y}}
\newcommand{\F}{\mathcal{F}}
\newcommand{\Fls}{\mathcal{F}^{i_{s}^{\ell}}}
\newcommand{\vets}{\mathbf{s}}
\newcommand{\vetr}{\mathbf{r}}
\newcommand{\vetx}{\mathbf{x}}
\newcommand{\vety}{\mathbf{y}}
\newcommand{\vetz}{\mathbf{z}}
\newcommand{\vetv}{\mathbf{v}}
\newcommand{\vetg}{\mathbf{g}}
\newcommand{\veth}{\mathbf{h}}
\newcommand{\imb}{\mathbf{b}}
\newcommand{\bb}{\begin{equation}}
\newcommand{\ee}{\end{equation}}
\newcommand{\ds}{\displaystyle}
\newcommand{\II}{\mathcal{I}}
\newcommand{\PX}{\mathcal{P}_{\mathbf{X}}}
\newcommand{\OO}{\mathcal{O}}
\newcommand{\V}{\mathcal{V}}
\newcommand{\SSS}{\mathcal{S}}
\begin{document}
 
\title[]{String-Averaging Incremental Subgradients for Constrained Convex Optimization with Applications to Reconstruction of 
Tomographic Images}
\author{Rafael Massambone de Oliveira$^1$, Elias Salom\~ao Helou$^1$ and Eduardo Fontoura Costa$^1$}
\address{$^1$ University of S\~ao Paulo - Institute of Mathematics and Computer Sciences, Department of Applied Mathematics and Statistics, S\~ao Carlos-SP, CEP 13566-590, Brazil}
\ead{massambone@usp.br, elias@icmc.usp.br, efcosta@icmc.usp.br}

\begin{abstract}
We present a method for non-smooth convex minimization which is based on subgradient directions and string-averaging techniques. 
In this approach, the set of available data is split into sequences (strings) and a given iterate is processed independently 
along each string, possibly in parallel, by an incremental subgradient method (ISM). 
The end-points of all strings are averaged to form the next iterate. 
The method is useful to solve sparse and large-scale non-smooth convex optimization problems, such as those arising in 
tomographic imaging. 
A convergence analysis is provided under realistic, standard conditions. 
Numerical tests are performed in a tomographic image reconstruction application, showing good performance for the 
convergence speed when measured as the decrease ratio of the objective function, in comparison to classical ISM.
\end{abstract}
\vspace{2pc}\noindent{\it Keywords:} convex optimization, incremental algorithms, subgradient methods, projection methods, string-averaging algorithms.
\maketitle

\section{Introduction} \label{intro}

A fruitful approach to solve an inverse problem is to recast it as an optimization
problem, leading to a more flexible formulation that can be handled with different
techniques. The \textit{reconstruction of tomographic images} is a classical example of a
problem that has been explored by optimization methods, among which the well-known 
\textit{incremental subgradient method} (ISM) \cite{solodov98, nedic01, helou09}, that is 
a variation of the \textit{subgradient method} \cite{dev85, shor85, polyak87}, features 
nice performance in terms of convergence speed.
There are many papers that discuss incremental gradient/subgradient algorithms for convex/non-convex objective 
functions (smooth or not) with applications to several fields \cite{bertsekas1997new,bertsekas2000gradient,
blatt2007convergent,nedic01,sundhar2010distributed,sundhar2009incremental,solodov1998incremental,solodov98,
tanaka2003subset,tseng1998incremental}. Some examples of applications to tomographic 
image reconstruction are found in \cite{ahn2003globally,browne1996row,de2001fast,hudson1994accelerated,
tanaka2003subset}.
In this paper, we consider a rather general optimization
problem that can be addressed by ISM and is useful for tomographic reconstruction
and other problems, including to find solutions for ill-conditioned and/or large-scale linear systems. This problem
consists of determining:
\bb \label{1-intro} \begin{array}{c} \ds \vetx \in \arg \min f(\vetx)
\\
\mbox{s.t.} \quad \vetx \in \X \subset \R^n, \end{array} \ee
where:
\begin{description}
\item[(i)] $f(\vetx) := P\sum_{i=1}^{m} \xi_i f_{i}(\vetx)$ in which $f_i : \R^n \rightarrow \R$ 
are convex (and possibly non-differentiable) functions;
\item[(ii)] $\X$ is a non-empty, convex and closed set;
\item[(iii)] The set $\II = \{ S_1, \dots, S_P \}$ is a \textit{partition} of $\{1, \dots, m \}$, i.e., 
$S_{\ell} \cap S_{j} = \emptyset$ for any $\ell, j \in \{ 1, \dots, P \}$ with $\ell \neq j$ 
and $\bigcup_{\ell=1}^{P} S_{\ell} = \left\{1, \dots, m\right\}$;
\item[(iv)] Given $w_\ell\in [0,1]$ and $S_\ell \in \II$, $\ell=1,\dots,P$, the weights $\xi_i$ satisfy $\xi_i = w_\ell$ for
all $i \in S_\ell$;
\item[(v)] $\sum_{\ell=1}^{P} w_{\ell} = 1$.
\end{description}

Problem (\ref{1-intro}) with conditions \textbf{(i)}-\textbf{(v)} is reduced to the classical problem of minimizing 
$\sum_{i=1}^{m} f_i(\vetx)$, s.t. $\vetx \in \X$, when $w_\ell = 1/P$ for all $\ell = 1, \dots, P$. The reason why we 
write the problem in this more complex way is twofold.
On one hand, it is common to find problems in which a set of fixed weights are used to 
prioritize the contribution of some component functions. For instance, in the context of 
distributed networks, the component functions $f_{i}$ (also called ``agents'') can be affected by external conditions, 
network topology, traffic, etc., making possible that some sets of agents have 
a prevalent role on the network, which can be modeled
by the weights (related to the corresponding subnets).
On the other hand, the weights and the partition, which bring flexibility to the model
and could possibly be explored aiming for instance at faster convergence, will fit naturally
in our algorithmic framework.

We consider an approach that mixes ISM and \textit{string-averaging algorithms} (SA algorithms). The general form of the SA 
algorithm was proposed initially in \cite{censor01} and applied in solving 
\textit{convex feasibility problems} (CFPs) with algorithms that use {\it projection methods} \cite{censor01,censor03,penfold10}. 
Strings are created so that 
ISM (more generally, any $\epsilon$-incremental subgradient method) can be processed in an 
independent form for each string (by step operators). Then, an average of string iterations is computed 
(combination operator), guiding the iterations towards the solution. To complete, approximate projections 
are used to maintain feasibility. We provide an analysis of convergence under reasonable assumptions, such as diminishing step-size rule 
and boundedness of the subgradients.

Some previous works in the literature have improved the understanding and practical efficiency of ISM by 
creating more general algorithmic structures, enabling a broader analysis of convergence and making them more robust 
and accurate \cite{nedic01, helou09, sundhar2009incremental, sundhar2010distributed, johansson2010}. 
We improve on those results by adding a string-averaging structure to the ISM that allows for an efficient 
parallel processing of a complete iteration which, consequently, can lead to fast convergence and suitable approximate solutions. 
Furthermore, the presented techniques present better smoothing properties in practice, which is good for imaging tasks. 
These features are desirable, especially when we seek to solve ill-conditioned/large scale problems. As mentioned at the 
beginning of this section, one of our goals is to obtain an efficient method for solving problems of reconstruction of 
tomographic images from incompletely sampled data.

Although our work is closely linked to ISM, it is important to mention other classes of methods that can be applied 
to convex optimization problems.
Under reasonable assumptions, problem (\ref{1-intro}) can be 
solved using \textit{proximal-type algorithms} (for a description of some of the main methods, see \cite{combettes2011}). 
For instance, in \cite{combettes2008} the authors propose a proximal decomposition method derived from the Douglas-Rachford algorithm and establish
its weak convergence in Hilbert spaces. Some variants and generalizations of such methods can be found in \cite{bredies2009, 
raguet2013,combettes2016, chen1997, combettes2011}. 
The \textit{bundle approach}~\cite{hil93} is often used for numerically solving non-differentiable convex optimization problems. 
Also, \textit{first order accelerated techniques}~\cite{bet09,bet09b} form yet another family of popular techniques for convex optimization problem endowed with a certain separability property. 
The advantage of ISM over the aforementioned techniques lies in its lightweight iterations from the computational viewpoint, 
not even requiring sufficient decrease properties to be checked. Besides, ISM presents a fast practical convergence rate in 
the first iterations which enables this 
technique to achieve good reconstructions within a small amount of time even for the huge problem sizes that appear, for example, in tomography.

The {\it tomographic image reconstruction problem} consists in finding an image described by a function 
$\psi : \mathbb R^2 \to \mathbb R$ 
from samples of its {\it Radon Transform}, which can be recast into solving the linear system
\begin{equation}
   \label{image system} R \vetx = \imb,
\end{equation}
where $R$ is the discretization of the Radon Transform operator, $\imb$ contains the collected samples of the 
Radon Transform and the desired image is represented by the vector $\vetx$. We consider solving the problem (\ref{image system}), 
rewriting it as a minimization problem, as in (\ref{1-intro}).
Solving problem (\ref{image system}) from an optimization standpoint is not a new idea. In particular, \cite{helou09} 
illustrates the application of some 
of the methods arising from a general framework to tomographic image reconstruction with a limited number of 
views. For the discretized problem (\ref{image system}), iterative methods such as ART (Algebraic Reconstruction Technique) 
\cite{natterer2001mathematical}, POCS (Projection Onto Convex Sets) \cite{bauschke1996projection, combettes97}, and 
Cimmino \cite{combettes1994inconsistent} have been widely used in the past.

Tomographic image reconstruction is an inverse problem in the sense that the image $\psi$ is to be obtained from the 
inversion of a linear compact operator, which is well known to be an ill-conditioned problem. While the specific case of 
Radon inversion has an analytical solution, that was published in 1917 by Johann Radon 
(for details see \cite{natterer86}), both such analytical techniques and the aforementioned iterative methods for linear systems of equations
suffer from amplification of the statistical noise which, in practice, is always present  in the right-hand side of~(\ref{image system}).
Therefore, methods designed to deal with noisy data have been developed, based on a maximum likelihood approach, 
among which EM (Expectation Ma\-xi\-mi\-za\-ti\-on) \cite{shepp1982maximum, vardi1985statistical}, 
OS-EM (Ordered Subsets Expectation Ma\-xi\-mi\-za\-ti\-on) \cite{hudson1994accelerated}, 
RAMLA (Row-Action Maximum Likelihood Algorithm) \cite{browne1996row}, 
BSREM (Block Sequential Regularized Expectation Ma\-xi\-mi\-za\-ti\-on) \cite{de2001fast}, 
DRAMA (Dynamic RAMLA) \cite{tanaka2003subset, helou05}, modified BSREM and relaxed 
OS-SPS (Ordered Subset-Se\-pa\-ra\-ble Paraboloidal Surrogates) \cite{ahn2003globally} are some of the best known in 
the literature. 
In \cite{hcc14}, a variant 
of the EM algorithm was introduced, called \textit{String-Averaging Expectation-Maximization} (SAEM) \textit{algorithm}. 
The SAEM algorithm 
was used in problems of image reconstruction in {\it Single-Photon Emission Computerized To\-mo\-gra\-phy} (SPECT) and 
showed good performance in simulated and real data studies. High-contrast images, with less noise and clearer object 
boundaries were reconstructed without incurring in more computation time.
Besides the BSREM, DRAMA, modified BSREM and relaxed OS-SPS, that are relaxed 
algorithms for (penalized) maximum-likelihood image reconstruction in tomography, the method introduced in \cite{dewaraja2010} considers 
an approach, based in OS-SPS, in which extra anatomical boundary information is
used. Other methods that use penalized models can be found in \cite{harmany2012, chouzenoux2013}. Proximal 
methods were used in \cite{anthoine2012} to reconstruct images obtained via {\it Cone Beam Computerized Tomography} (CBCT) and 
{\it Positron Emission Tomography} (PET). In \cite{chouzenoux2013}, the {\it Majorize-Minimize Memory Gradient algorithm} 
\cite{chouzenoux2011, chouzenoux2013siam} is studied and applied to imaging tasks.

The paper is organized as follows:
Section \ref{sec.2} contains some preliminary theory involving incremental subgradient methods, optimality and feasibility 
operators and string-averaging algorithm; 
Section \ref{sec.3} discusses the proposed algorithm to solve (\ref{1-intro}), \textbf{(i)}-\textbf{(v)}; 
Section \ref{sec.4} shows theoretical convergence results; 
in Section \ref{sec.5} numerical tests are performed with reconstruction of tomographic images. 
Final considerations are given in Section \ref{sec.6}.

\section{Preliminary theory} \label{sec.2}

Throughout the text, we will use the following notations: bold-type notations e.g. $\vetx$, $\vetx_i$ and $\vetx_{i}^{k}$ are vectors 
whereas $x$ is a number. We denote $x_i$ as the $i$th coordinate of vector $\vetx$. Moreover,
\begin{eqnarray*} \ds \PX(\vetx):= \arg \min_{\vety \in \X} \left\| \vety - \vetx \right\|, \quad d_{\X}(\vetx):= \left\|\vetx - \PX(\vetx) \right\|, \\ 
\left[ x \right]_{+} := \max\left\{0,x\right\}, \quad f^{\ast} = \inf_{\vetx \in \X} f(\vetx)  \quad \mbox{and} \quad \X^{\ast} = \left\{\vetx \in \X \, | \, f(\vetx) = f^{\ast}\right\},\end{eqnarray*}
where we assume that $\X^{\ast} \neq \emptyset$.

One of the main methods for solving (\ref{1-intro}) is the
\textit{subgradient method}, whose extensive theory can be found in \cite{dev85, shor85, polyak87, hil93, bertsekas1999nonlinear},
\bb \label{subgrad-method} \vetx^{k+1} = \PX\left(\vetx^k - \lambda_k \sum_{i=1}^{m} \vetg_{i}^{k}\right), \quad \lambda_k > 0, \quad \vetg_{i}^{k} \in \partial f_i(\vetx^k),\ee 
where the {\it subdifferential} of $f: \R^n \rightarrow \R$ at $\vetx$ (the set of all subgradients) 
can be defined by
\bb \ds \partial f(\vetx) := \left\{\vetg \, | \, f(\vetx) + \left\langle \vetg, \vetz - \vetx \right\rangle \leq f(\vetz), \,\, \forall \vetz\right\}. \ee
A similar approach to (\ref{subgrad-method}), known as \textit{incremental subgradient method}, was studied 
firstly by Kibardin in \cite{kibardin1979decomposition} and then analyzed by Solodov and 
Zavriev in \cite{solodov98}, in which a complete iteration of the algorithm can be described as follows:
\begin{eqnarray} \vspace{0.3cm} \vetx_{0}^{k} &=& \vetx^{k} \nonumber
 \\ \label{sub-iterate} \vspace{0.3cm} \vetx_{i}^{k} &=& \vetx_{i-1}^{k} - \lambda_k \vetg_{i}^{k}, \quad i = 1, \dots, m, \quad \vetg_{i}^{k} \in \partial f_{i}(\vetx_{i-1}^{k}) 
 \\ \ds \vetx^{k+1} &=& \PX\left(\vetx_{m}^{k}\right). \nonumber \end{eqnarray}
A variant of this algorithm that uses projection onto $\X$ to compute the sub-iterations $\vetx_{i}^{k}$ was analyzed in \cite{nedic01}.

The method we propose in this paper for solving the problem given in (\ref{1-intro}), \textbf{(i)}-\textbf{(v)} has the 
following general form described in \cite{helou09}:
\bb \label{ov} \begin{array}{rcl} \vetx^{k+1/2} &=& \ds \OO_f(\lambda_k, \vetx^k); \\ \vetx^{k+1} &=& \ds \V_{\X}(\vetx^{k+1/2}). \end{array} \ee
In the above equations, $\OO_f$ is called {\it optimality operator} and $\V_{\X}$ is the 
{\it feasibility operator}.
This framework was created to handle quite general algorithms for convex optimization problems. 
The basic idea consists in dividing an iterate in two parts: an optimality step which tries to guide the iterate towards the 
minimizer of the objective function (but not necessarily in a descent direction), followed by the feasibility step that drives 
the iterate in the direction of feasibility.

Next we enunciate a result due to Helou and De Pierro (see \cite[Theorem 2.5]{helou09}), establishing convergence of the method 
(\ref{ov}) under some conditions. This result is the key for the convergence analysis of the algorithm we propose in section 
\ref{sec.3}.

\begin{theorem} \label{conv1} The sequence $\{\vetx^k \}$ generated by the method described in \emph{(\ref{ov})} converges in the sense that
\begin{equation*} \ds d_{\X^{\ast}}(\vetx^k) \rightarrow 0 \qquad \mbox{and} \qquad \lim_{k \rightarrow \infty} f(\vetx^k) = f^{\ast}, \end{equation*}
if all of the following conditions hold:
\begin{condition}[Properties of optimality operator] \label{cond1} For every $\vetx \in \X$ and for all sequence $\lambda_k \geq 0$, there exist 
$\alpha > 0$ and a sequence 
$\rho_k \geq 0$ such that the optimality operator $\OO_f$ satisfies for all $k \geq 0$
\bb \label{cond1-eq1} \ds \|\OO_f(\lambda_k, \vetx^k) - \vetx \|^2 \leq \|\vetx^k - \vetx \|^2 - \alpha \lambda_k (f(\vetx^k) - f(\vetx)) + \lambda_k \rho_k. \ee
We further assume that the error term in the above inequality vanishes, i.e., $\rho_k \to 0$ and we consider a 
boundedness property for the optimality operator: there is $\gamma > 0$ such that
\bb \label{cond1-eq2} \ds \| \vetx^k - \OO_f (\lambda_k, \vetx^k) \| \leq \lambda_k \gamma. \ee
\end{condition}
\begin{condition}[Property of feasibility operator] \label{cond2} For the feasibility operator $\V_{\X}$, we impose that for all 
$\delta > 0$, exists $\epsilon_{\delta} > 0$ such that, if $d_{\X}(\vetx^{k+1/2}) \geq \delta$ and $\vetx \in \X$ we have
\bb \ds \|\V_{\X}(\vetx^{k+1/2}) - \vetx \|^2 \leq \| \vetx^{k+1/2} - \vetx \|^2 - \epsilon_{\delta}. \ee
Moreover, for all $\vetx \in \X$, $\V_{\X}(\vetx) = \vetx$, i.e., $\vetx$ is a fixed point of $\V_{\X}$.
\end{condition} 
\begin{condition}[Diminishing step-size rule] \label{cond3} The sequence $\{ \lambda_k \}$ satisfies
\bb \label{hippasso} \ds \lambda_k \rightarrow 0^{+}, \qquad \sum_{k=0}^{\infty} \lambda_k = \infty.\ee
\end{condition}
\begin{condition} \label{cond4} The optimal set $\X^{\ast}$ is bounded, $\{ d_{\X}(\vetx^k) \}$ is bounded and 
\begin{equation*} [f(\PX(\vetx^k)) - f(\vetx^k)]_{+} \rightarrow 0. \end{equation*}
\end{condition}
\end{theorem}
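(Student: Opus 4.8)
The plan is to collapse the two-step iteration (\ref{ov}) into a single quasi-Fej\'er-type recursion with respect to the optimal set and then extract the two conclusions from it. Fix any $\vetx^* \in \X^*$, so $f(\vetx^*) = f^*$. Composing the conditions, I would first observe that Condition~\ref{cond2}, applied with an arbitrary infeasibility threshold together with the fixed-point property $\V_\X(\vetx) = \vetx$ for $\vetx \in \X$, forces $\V_\X$ to be nonexpansive relative to feasible points: $\|\V_\X(\vetx^{k+1/2}) - \vetx^*\|^2 \le \|\vetx^{k+1/2} - \vetx^*\|^2$ always, with the sharper bound $\|\V_\X(\vetx^{k+1/2}) - \vetx^*\|^2 \le \|\vetx^{k+1/2} - \vetx^*\|^2 - \epsilon_\delta$ whenever $d_\X(\vetx^{k+1/2}) \ge \delta$. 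Feeding the optimality bound (\ref{cond1-eq1}) into this yields the master inequality
\[ \|\vetx^{k+1} - \vetx^*\|^2 \le \|\vetx^k - \vetx^*\|^2 - \alpha\lambda_k(f(\vetx^k) - f^*) + \lambda_k\rho_k, \]
carrying an extra $-\epsilon_\delta$ on $\delta$-infeasible steps. The crucial sign control comes from Condition~\ref{cond4}: since $\PX(\vetx^k) \in \X$ gives $f(\PX(\vetx^k)) \ge f^*$, writing $\mu_k := [f(\PX(\vetx^k)) - f(\vetx^k)]_+ \to 0$ one gets $f(\vetx^k) - f^* \ge -\mu_k$, hence $-\alpha\lambda_k(f(\vetx^k) - f^*) \le \alpha\lambda_k\mu_k$ in every case.

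Second, I would establish asymptotic feasibility, $d_\X(\vetx^k) \to 0$. On any step with $d_\X(\vetx^{k+1/2}) \ge \delta$ the master inequality gives $\|\vetx^{k+1} - \vetx^*\|^2 \le \|\vetx^k - \vetx^*\|^2 + \lambda_k(\alpha\mu_k + \rho_k) - \epsilon_\delta$. Because $\lambda_k \to 0$ and $\mu_k, \rho_k \to 0$ while $\epsilon_\delta > 0$ is a fixed constant, for all large $k$ the error $\lambda_k(\alpha\mu_k + \rho_k)$ is at most $\epsilon_\delta/2$, so each such step strictly decreases $\|\vetx^k - \vetx^*\|^2$ by at least $\epsilon_\delta/2$; were there infinitely many, the nonnegative quantity $\|\vetx^k - \vetx^*\|^2$ would be driven below zero, a contradiction. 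Thus $d_\X(\vetx^{k+1/2}) \ge \delta$ occurs only finitely often; since $\delta$ is arbitrary and $\|\vetx^{k+1/2} - \vetx^k\| \le \lambda_k\gamma \to 0$ by (\ref{cond1-eq2}), both $d_\X(\vetx^{k+1/2})$ and $d_\X(\vetx^k)$ tend to $0$.

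Third, I would prove $\liminf_k f(\vetx^k) = f^*$. Suppose not; then there are $\eta > 0$ and an index $K$ with $f(\vetx^k) - f^* \ge \eta$ for all $k \ge K$. As $\rho_k \to 0$ I may enlarge $K$ so that $\rho_k \le \alpha\eta/2$, whereupon the master inequality collapses to $\|\vetx^{k+1} - \vetx^*\|^2 \le \|\vetx^k - \vetx^*\|^2 - (\alpha\eta/2)\lambda_k$. Summing from $K$ and invoking $\sum_k \lambda_k = \infty$ (Condition~\ref{cond3}) drives the right-hand side to $-\infty$, contradicting nonnegativity. Together with the lower bound $f(\vetx^k) \ge f^* - \mu_k$, this pins down $\liminf_k f(\vetx^k) = f^*$.

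Finally --- and this is where I expect the real difficulty --- I would upgrade the $\liminf$ to the full statement $\lim_k f(\vetx^k) = f^*$ and $d_{\X^*}(\vetx^k) \to 0$. The obstruction is that the error terms $\lambda_k\rho_k$ and $\alpha\lambda_k\mu_k$ are not summable (only their factors vanish, while $\sum_k \lambda_k = \infty$), so the recursion is not Fej\'er monotone and convergence of $\{\|\vetx^k - \vetx^*\|\}$ cannot be read off directly. My plan is an up-crossing argument: using boundedness of the iterates (from Condition~\ref{cond4} with boundedness of $\X^*$) and the fact that bounded subgradients make $f$ Lipschitz on the region swept by the iterates, while $\|\vetx^{k+1} - \vetx^k\| \le \lambda_k\gamma + \|\V_\X(\vetx^{k+1/2}) - \vetx^{k+1/2}\| \to 0$. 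If $\limsup_k f(\vetx^k) > f^*$, the objective must repeatedly cross from below $f^* + \eta$ up past $f^* + 2\eta$; each crossing forces a definite accumulated step-length and hence, through the decrease active while $f(\vetx^k) > f^* + \eta$, a definite drop in $\|\vetx^k - \vetx^*\|^2$, so that infinitely many crossings again produce a contradiction. With $\lim_k f(\vetx^k) = f^*$ in hand, the conclusion $d_{\X^*}(\vetx^k) \to 0$ then follows from a sublevel-set stability argument: a bounded, asymptotically feasible sequence whose objective tends to $f^*$ must approach the bounded, closed optimal set $\X^*$.
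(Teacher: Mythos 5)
A preliminary remark: the paper itself does not prove Theorem \ref{conv1} --- it imports the statement from Theorem 2.5 of \cite{helou09} --- so your proposal has to be judged against that reference's argument. Your Step 1 (the master inequality, including the observation that Condition \ref{cond2} plus the fixed-point property make $\V_{\X}$ quasi-nonexpansive with respect to points of $\X$) and your Step 3 (the $\liminf$ argument) are correct. The gap is in Steps 2 and 4, and it is essentially the same gap twice: you argue that infinitely many drops of a fixed size $\epsilon_{\delta}/2$ in $\|\vetx^k-\vetx^{\ast}\|^2$ would force that quantity below zero, which tacitly assumes it is non-increasing on the remaining steps. It is not: on an ordinary step the master inequality permits an increase of $\lambda_k(\rho_k+\alpha\mu_k)$, and since only $\rho_k,\mu_k\to0$ while $\sum_k\lambda_k=\infty$, the series $\sum_k\lambda_k(\rho_k+\alpha\mu_k)$ may diverge (take $\lambda_k=k^{-1/2}$, $\rho_k=1/\log k$). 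The rises accumulated between two consecutive $\delta$-infeasible steps can then exceed $\epsilon_{\delta}/2$, and no contradiction results. The proof of asymptotic feasibility in \cite{helou09} (its Proposition 2.1) avoids this by running the drop argument on $d_{\X}(\vetx^k)$ itself: (\ref{cond1-eq2}) gives $d_{\X}(\vetx^{k+1/2})\leq d_{\X}(\vetx^k)+\gamma\lambda_k$, and Condition \ref{cond2} applied with $\vetx=\PX(\vetx^{k+1/2})$ gives $d_{\X}(\vetx^{k+1})^2\leq d_{\X}(\vetx^{k+1/2})^2-\epsilon_{\delta}$ on $\delta$-infeasible steps and $d_{\X}(\vetx^{k+1})\leq d_{\X}(\vetx^{k+1/2})<\delta$ otherwise. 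With that Lyapunov function a ``good'' step leaves the quantity small in absolute terms, not merely ``not much larger,'' so nothing accumulates and the contradiction goes through.

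Step 4 has a second, independent defect. An up-crossing of $f$ by $\eta$ does force $\sum\|\vetx^{k+1}-\vetx^k\|\geq\eta/L$, but the per-iteration displacement is $\gamma\lambda_k+\|\V_{\X}(\vetx^{k+1/2})-\vetx^{k+1/2}\|$, and the hypotheses bound the feasibility displacement only by $2d_{\X}(\vetx^{k+1/2})$, which tends to zero but is not $O(\lambda_k)$. A crossing therefore does not yield a lower bound on $\sum\lambda_k$, so the decrease $(\alpha\eta/2)\sum\lambda_k$ accrued while $f(\vetx^k)>f^{\ast}+\eta$ is not bounded away from zero --- and even if it were, the non-summable rises between crossings would defeat the count exactly as in Step 2. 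The argument that actually closes the proof uses the boundedness of $\X^{\ast}$ differently: since $\X^{\ast}$ is a nonempty bounded level set of the convex function $\max\{f-f^{\ast},d_{\X}\}$, for every $\eta>0$ there is $\epsilon>0$ such that $\{\vetx\,:\,d_{\X}(\vetx)\leq\epsilon,\ f(\vetx)\leq f^{\ast}+\epsilon\}$ lies within distance $\eta$ of $\X^{\ast}$. The $\liminf$ argument together with asymptotic feasibility places $\vetx^k$ in this set infinitely often; for $k$ large and as long as $f(\vetx^k)>f^{\ast}+\epsilon$, the master inequality (with $\rho_k\leq\alpha\epsilon$) makes $\|\vetx^k-\mathcal{P}_{\X^{\ast}}(\vetx^j)\|$ non-increasing, where $j$ is the last index with $f(\vetx^j)\leq f^{\ast}+\epsilon$; and the single escape step moves the iterate by at most $\gamma\lambda_j+2d_{\X}(\vetx^{j+1/2})\to0$. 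Hence $\limsup_k d_{\X^{\ast}}(\vetx^k)\leq\eta$ for every $\eta>0$, so $d_{\X^{\ast}}(\vetx^k)\to0$, and $f(\vetx^k)\to f^{\ast}$ follows from continuity of $f$ on the bounded region swept by the iterates. You should replace the up-crossing scheme by this trapping argument.
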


\begin{remark} \label{rmk1} \normalfont
Regarding the requirement $[f(\PX(\vetx^k)) - f(\vetx^k)]_{+} \rightarrow 0$, it holds if there is a bounded sequence $\{ \vetv^k \}$ where 
$\vetv^k \in \partial f(\PX(\vetx^k))$ and $d_{\X}(\vetx^k) \to 0$. Indeed, 
\begin{equation*}
\langle \vetv^k, \vety - \PX(\vetx^k) \rangle \leq f(\vety) - f(\PX(\vetx^k)), \quad \forall \, \vety \in \mathbb{R}^n.
\end{equation*}
By Cauchy-Schwarz inequality, we have $\| \vetv^k \| \|\PX(\vetx^k) - \vety \| \geq [f(\PX(\vetx^k)) - f(\vety)]_{+}$. Taking $\vety = 
\vetx^k$, then $d_{\X}(\vetx^k) \to 0$ ensures that $[f(\PX(\vetx^k)) - f(\vetx^k)]_{+} \to 0$. Therefore, under this mild boundedness assumption 
on the subdifferentials $\partial f(\PX(\vetx^k))$, proving that $d_{\X}(\vetx^k) \to 0$ also ensures that $[f(\PX(\vetx^k)) - f(\vetx^k)]_{+} \rightarrow 0$.

Concerning the assumption $d_{\X}(\vetx^k) \to 0$, Proposition 2.1 in \cite{helou09} shows that it holds if $\{d_{\X}(\vetx^k) \}$ is bounded, 
$\lambda_k \to 0^{+}$, and Equation 
(\ref{cond1-eq2}) plus Condition \ref{cond2} hold. Since Condition \ref{cond4} requires $\{d_{\X}(\vetx^k) \}$ to be bounded, then we have that 
$[f(\PX(\vetx^k)) - f(\vetx^k)]_{+} \to 0$ just under the boundedness assumption on $\partial f(\PX(\vetx^k))$. Furthermore, Corollary 2.7 in \cite{helou09} states 
that $d_{\X}(\vetx^k) \to 0$ if $\lambda_k \to 0^{+}$, Conditions \ref{cond1} and \ref{cond2} hold and there is $f_l$ such that $f(\vetx^k) \geq f_l$ for all 
$k$. Basically, the hypotheses of this corollary allow to show that $\{d_{\X}(\vetx^k) \}$ is bounded and result follow by Proposition 2.1. This is the situation that occurs 
in our numerical experiment. Such remarks are important to show how the hypotheses of our main convergence result (see Corollary \ref{convSA} in section 
\ref{sec.4}) can be reasonable. \qed
\end{remark}

To state our algorithm in next section, we need to define the operators $\OO_f$ and $\V_{\X}$. Below we present the 
last ingredient of our operator $\OO_f$, the \textit{String-Averaging} (SA) \textit{algorithm}.
Originally formulated in \cite{censor01}, SA algorithm consists of dividing an index set $\mathrm{I} = \left\{1,2, \dots, \eta \right\}$ 
into {\it strings} in the following manner
\bb \ds \Delta_{\ell} := \left\{i_{1}^{\ell}, i_{2}^{\ell}, \dots, i_{m(\ell)}^{\ell}\right\}, \ee
where $m(\ell)$ represents the number of elements in the string $\Delta_{\ell}$ and $\ell \in \{ 1,2, \dots,  N \}$. 
Let us consider $\mathcal{X}$ and $\mathcal{Y}$ as subsets of $\R^{n}$ where $\mathcal{Y} \subseteq \mathcal{X}$. 
The basic idea behind the method consists in the sequential application of 
{\it step operators} $\Fls: \mathcal{X} \to \mathcal{Y}$, for each $s = 1,2, \dots, m(\ell)$
over each string $\Delta_{\ell}$, producing $N$ vectors $\vety_{\ell}^{k} \in \mathcal{Y}$. Next, a {\it combination operator} 
$\F: \mathcal{Y}^N \to \mathcal{Y}$ mixes, usually by weighted average, all vectors $\vety_{\ell}^{k}$ to obtain 
$\vety^{k+1}$. We refer to the index $s$ as the \emph{step} and the index $k$ as the \emph{iteration}.
Therefore, given $\vetx^0 \in \mathcal{X}$ and strings $\Delta_1, \dots, \Delta_N$ of $\mathrm{I}$, 
a complete iteration of the SA algorithm is 
computed, for each $k \geq 0$, by equations
\bb \label{step_op} \ds \vety_{\ell}^{k} := \F^{i_{m(\ell)}^{\ell}} \circ \dots \circ \F^{i_{2}^{\ell}} \circ \F^{i_{1}^{\ell}}\left(\vetx^k\right), \ee
\bb \label{comb_op} \ds \vety^{k+1} := \F((\vety_{1}^{k}, \dots, \vety_{N}^{k})). \ee
The main advantage of this approach is to allow for computation of each vector $\vety_{\ell}^{k}$ \emph{in parallel} at each 
iteration $k$, which is possible because the step operators $\F^{i_{1}^{\ell}}, \dots, \F^{i_{m(\ell)}^{\ell}}$ act along each 
string independently.

\section{Proposed algorithm} \label{sec.3}

Now we are ready to define $\OO_f$ and $\V_{\X}$. Let us start by defining the optimality operator 
$\OO_f: \R_{+} \times \Y \to \Y$, where $\Y$ is a non-empty, closed and convex set 
such that $\X \subset \Y \subseteq \R^{n}$. For this, let $ \F^{i_{s}^{\ell}}: \R_{+} \times \Y \to \Y$ and $\F: \Y^P \to \Y$. Consider 
the set of strings $\Delta_1 = S_1, \dots, \Delta_P = S_P$ 
and the weight set $\{ w_\ell \}_{\ell=1}^{P}$ as defined in the problem given in (\ref{1-intro}) with conditions 
\textbf{(iii)}-\textbf{(v)}.  
Then, given $\vetx \in \Y$ and $\lambda \in \R_{+}$, we define
\bb \label{initial_opt_op} \vetx_{i_{0}^{\ell}} := \vetx, \quad \mbox{for all} \quad \ell = 1, \dots, P,\ee
\bb \label{step.operators} \ds \vetx_{i_{s}^{\ell}} := \F^{i_{s}^{\ell}}(\lambda, \vetx_{i_{s-1}^{\ell}}) := \vetx_{i_{s-1}^{\ell}} - 
\lambda \vetg_{i_{s}^{\ell}}, \quad s = 1, \dots, m(\ell), \ee
\bb \label{end.points} \vetx_{\ell} := \vetx_{i_{m(\ell)}^{\ell}}, \quad \ell = 1, \dots, P, \ee
\bb \label{SA_isoo} \OO_f(\lambda, \vetx) := \F((\vetx_1, \dots, \vetx_P)) := \sum_{\ell=1}^{P} w_{\ell} \vetx_{\ell}, \ee
where $\vetg_{i_{s}^{\ell}} \in \partial f_{i_{s}^{\ell}}(\vetx_{i_{s-1}^{\ell}})$.
Operators $\F^{i_{s}^{\ell}}$ in (\ref{step.operators}) correspond to the step operators in equation (\ref{step_op}) of the SA algorithm and its 
definition is motivated by equation 
(\ref{sub-iterate}) of the incremental subgradient method. Function $\F$ in (\ref{SA_isoo}) corresponds to the combination 
operator in (\ref{comb_op}) and performs a weighted average 
of the end-points $\vetx_{\ell}$, completing the definition of the operator $\OO_f$.

Now we need to define a feasibility operator $\V_{\X}$. 
For that, we use the {\it subgradient projection} \cite{bauschke06,combettes97,yamada2005adaptive,yamada2005hybrid}. 
Let us start noticing that every convex set $\X \neq \emptyset$ can be written as
\bb \label{feasible.set} \ds \X = \bigcap_{i=1}^{t} \, lev_0(h_i),\ee
where $lev_0(h_i) := \left\{\vetx \,| \, h_{i}(\vetx) \leq 0\right\}$. Each function $h_i : \R^n \rightarrow \R$ ($t$ is finite) is supposed to be convex. 
The feasibility operator $\V_{\X}: \R^n \rightarrow \R^n$ is defined in \cite{helou09} in the following form:
\bb \label{viabilidade} \ds \V_{\X}:= \SSS_{h_t}^{\nu_t} \circ \SSS_{h_{t-1}}^{\nu_{t-1}} \circ \dots \circ \SSS_{h_1}^{\nu_1}. \ee
This definition assumes that there is $\sigma \in (0,1]$ such that $\nu_i \in [\sigma, 2 - \sigma]$ for all $i$. Each operator 
$\SSS_{h}^{\nu}: \R^n \rightarrow \R^n$ in the previous definition is constructed using a $\nu$-relaxed version of the 
subgradient projection with Polyak-type step-sizes , i.e.,
\bb \label{subgrad_projection} \ds \SSS_{h}^{\nu}(\vetx) := \left\{ \begin{array}{ccl} \vetx - \nu \frac{[h(\vetx)]_{+}}{\left\|\veth\right\|^2} \veth, & \mbox{if} & \ds \veth \neq \mathbf{0}; \\ \vetx, &  & \mbox{otherwise}, \end{array} \right. \ee
where $\nu \in (0,2)$ and $\veth \in \partial h(\vetx)$.

In order to get a better understanding of the behavior of our feasibility operator, Figure \ref{feasibility_ex} 
shows the trajectory taken by successive applications of the operator $\V_{\X}$. The feasible
set $\X$ is the intersection of the zero sublevel sets of the following convex functions: $h_1(\vetx) = \langle \mathbf{a}, 
\vetx \rangle + 2\| \vetx \|_1 - 1$, $h_2(\vetx) = 3 \| \vetx \|_{\infty} - 2.5$ and 
$h_3(\vetx) = \| A \vetx - \mathbf{a} \|_1 + 2\|B \vetx - \mathbf{c} \|_2 - 10$ where 
\begin{equation*}
A = \left[ \begin{array}{cc} 2 & 1 \\ -1 & 3 \end{array} \right], \quad
B = \left[ \begin{array}{cc} 1 & 0 \\ -2 & 2 \end{array} \right], \quad 
\mathbf{a} = [2 \quad 1]^T \quad \mbox{and} \quad
\mathbf{c} = [1 \,\, -2]^T.
\end{equation*}
To obtain $\V_{\X}(\vetx) = \SSS_{\veth_3}^{\nu_3} \circ \SSS_{\veth_2}^{\nu_2} \circ \SSS_{\veth_1}^{\nu_1}(\vetx)$, 
we compute the subgradients $\veth_i \in \partial h_i(\vets_{i-1})$, $i = 1,\dots,3$, such that $\vets_0 := \vetx$ and 
$\vets_i := \SSS_{\veth_i}^{\nu_i}(\vets_{i-1})$. We choose $[-3 \,\, -2.5]^T$ as an initial point and the following relaxation parameters: 
$\nu_1 = 0.5$, $\nu_2 = 0.6$ and $\nu_3 = 0.7$.
\begin{figure}
\centering
\begin{tikzpicture}[scale=0.4]
\node (label) at (0,0){
\includegraphics[width=12cm,height=12cm]{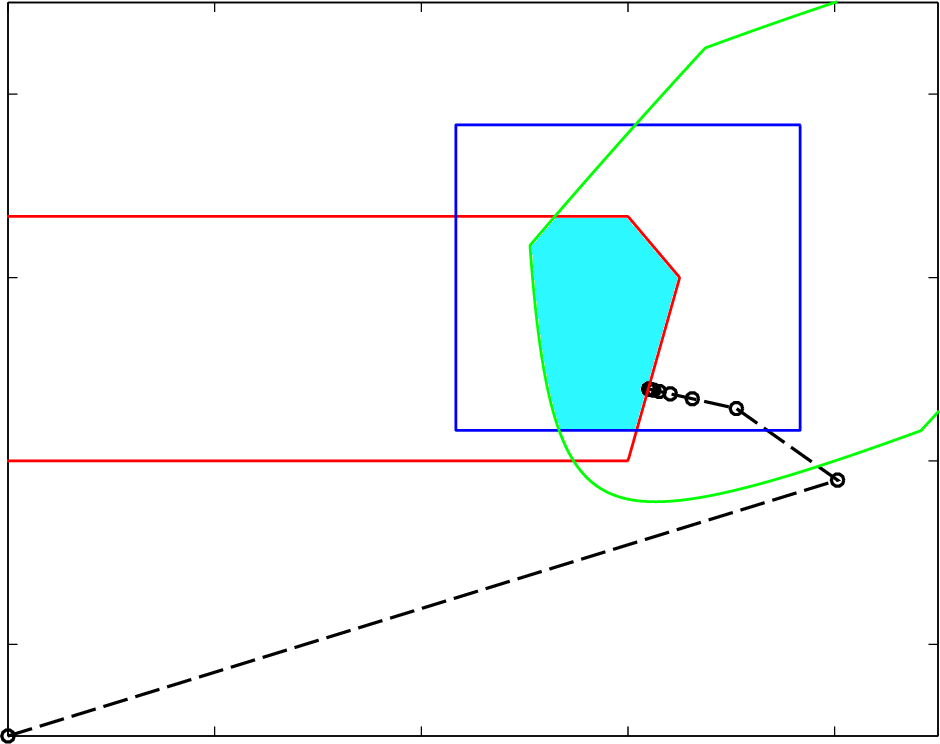}
};
\node  (h1) at (-8,-2.7) {$h_1(\vetx) = 0$};
\node  (h2) at (1.7,10.7) {$h_2(\vetx) = 0$};
\node[rotate = 25]  (h3) at (10,13.2) {$h_3(\vetx) = 0$};
\node  (X) at (4,2) {\Large $\X$};
\node at (15,-15.75) {$1.5$};
\node at (5.1,-15.75) {$0$};
\node at (-15,-15.75) {$-3$};
\node at (-16,15) {$1.5$};
\node at (-15.5,3.8) {$0$};
\node at (-16.5,-14.5) {$-2.5$};
\end{tikzpicture}
\caption{Ten successive applications of the feasibility operator $\V_{\X}$. The circles represent the points obtained in each application, 
starting from the point $[-3 \,\, -2.5]^T$. The dashed line describes the trajectory formed by these points.} \label{feasibility_ex}
\end{figure}

\begin{remark} \normalfont
A string-averaging version of the feasibility operator can easily be derived
in the following manner. Consider $Q$ strings 
$V_j := \{ i^j_1, i^j_2, \dots, i^j_{\kappa( j )} \} \subset \left\{1, \dots, t\right\}$ such that $\bigcup_{j=1}^{Q} V_j = \left\{1, \dots, t\right\}$, 
where $\kappa(j)$ is the number of elements in the string $V_j$. Then, for each $j = 1, \dots, Q$, we define the \emph{string feasibility operator} $\V_j$ as
\begin{equation*}
\mathcal V_j := \SSS_{\veth_{i^j_{\kappa( j )}}}^{\nu_{i^j_{\kappa( j )}}} \circ \SSS_{\veth_{i^j_{\kappa( j ) - 1}}}^{\nu_{i^j_{\kappa( j ) - 1}}} \circ \dots \circ \SSS_{\veth_{i^j_1}}^{\nu_{i^j_1}},
\end{equation*}
each satisfying for $\mathbf y \in \mathbf X_j := \bigcap_{i \in V_j} lev_0( h_i )$ and every $\mathbf x$ with $d_{\mathbf X_j}( \mathbf x ) \geq \delta$:
\begin{equation}\label{eq:veiaString}
   \| \mathcal V_j( \mathbf x ) - \mathbf y \|^2 \leq \| \mathbf x - \mathbf y \|^2 - \epsilon^j_\delta.
\end{equation}
We can then average these operators to obtain a new feasibility operator as 
$\tilde{ \mathcal V}_{\mathbf X} := 1/Q\sum_{j = 1}^Q\mathcal V_j$. Making use of the triangle inequality and 
$(\sum_{i =  1}^n a_i)^2 \leq n\sum_{i =  1}^n a_i^2$, we have
\begin{eqnarray*}
   \| \tilde{\mathcal V}_{\X}( \mathbf x ) - \mathbf y \|^2 & {}= \left\| \frac1Q\sum_{j = 1}^Q\left[ \mathcal V_j( \mathbf x ) - \mathbf y \right]\right \|^2=\frac1{Q^2}\left\| \sum_{j = 1}^Q\left[ \mathcal V_j( \mathbf x ) - \mathbf y \right]\right \|^2 \\
   & {}\leq \frac1{Q^2}\left(\sum_{j = 1}^Q\left\| \mathcal V_j( \mathbf x ) - \mathbf y \right \| \right)^2\leq \frac1Q\sum_{j = 1}^Q\left\| \mathcal V_j( \mathbf x ) - \mathbf y \right \|^2.
\end{eqnarray*}
Now we notice that if $\mathbf X$ is bounded, $d_{\mathbf X}( \mathbf x ) \geq \delta$ implies that $\max\{ d_{\mathbf X_j}( \mathbf x ) \} \geq \tilde\delta > 0$ (for weaker conditions under which the same holds, see the results in~\cite{hof92}). Therefore, by using~(\ref{eq:veiaString}) in the inequality above we obtain, for $\mathbf y \in \mathbf X = \bigcap_{j = 1}^Q\mathbf X_j$ and every $\mathbf x$ with $d_{\mathbf X}( \mathbf x ) \geq \delta$:
\begin{eqnarray*}
   \| \tilde{\mathcal V}_{\X}( \mathbf x ) - \mathbf y \|^2 & {}\leq \frac1Q\sum_{j = 1}^Q\left\{\left\| \mathbf x - \mathbf y \right \|^2 - \epsilon^j_{\delta_j} \right\},
\end{eqnarray*}
where $\delta_j := d_{\mathbf X_j}( \mathbf x )$. Therefore:
\begin{eqnarray*}
   \| \tilde{\mathcal V}_{\X}( \mathbf x ) - \mathbf y \|^2 & {}\leq \left\| \mathbf x - \mathbf y \right \|^2 - \tilde\epsilon_{\delta},
\end{eqnarray*}
where $ \tilde\epsilon_\delta = \min_{j \in \{ 1, \dots, Q\}}\{\epsilon^j_{\tilde\delta}\}$.

The above argument suggests that if the operators $\mathcal V_j$ satisfy Condition~\ref{cond2} with $\mathbf X$ replaced by 
$\mathbf X_j$, then its average also will satisfy Condition~\ref{cond2} with $\mathbf X$ replaced by $\bigcap_{j=1}^{Q} \X_j$.
To the best of our knowledge, the previous discussion presents a first step to generalize some of the results 
from~\cite{censor01,censor2013convergence,censor2014string} towards averaging strings of inexact projections, 
or more specifically, averaging of Fej\'er-monotone operators.
We do not make use of averaged feasibility operators in this paper for clarity of presentation and also because our numerical 
examples can be handled in the classical way, without string averaging, since our model has few constraints. \qed
\end{remark}

With the optimality and feasibility operators already defined, we present a complete description of the algorithm we propose to solve 
the problem defined in (\ref{1-intro}), \textbf{(i)}-\textbf{(v)}.

\begin{algorithm}[String-averaging incremental subgradient method] \label{alg-proposed}        
$ $
\begin{description}
\item[\textbf{Input:}] Choose an initial vector $\vetx^0 \in \Y$ and a sequence of step-sizes $\lambda_k \geq 0$.
\item[\textbf{Iteration:}] Given the current iteration $\vetx^k$, do
\begin{description}
\item[Step 1.] \emph{(Step operators)} Compute independently for each $\ell=1,\dots,P$:
\begin{eqnarray} \label{step.operators.alg} \ds \vetx_{i_{0}^{\ell}}^{k} &=& \vetx^k, \nonumber \\
		 \ds \vetx_{i_{s}^{\ell}}^{k} &=& \F^{i_{s}^{\ell}}(\lambda_k, \vetx_{i_{s-1}^{\ell}}^{k}), \quad s = 1, \dots, m(l), \nonumber \\
		 \ds \vetx_{\ell}^{k} &=& \vetx_{i_{m(\ell)}^{\ell}}^{k},
\end{eqnarray}
where $i_{s}^{\ell} \in \Delta_\ell := S_\ell$ for each $s=1,\dots,m(\ell)$ and $\F^{i_{s}^{\ell}}$ is defined in \emph{(\ref{step.operators})}.

\item[Step 2.] \emph{(Combination operator)} Use the end-points $\vetx_{\ell}^{k}$ obtained in Step \emph{1} and the optimality 
operator $\OO_f$ defined in \emph{(\ref{SA_isoo})} to obtain:
\bb \label{opt.operator.alg} \vetx^{k+1/2} = \OO_f(\lambda_k, \vetx^k).\ee

\item[Step 3.] Apply feasibility operator $\V_{\X}$ defined in \emph{(\ref{viabilidade})} on the sub-iteration $\vetx^{k+1/2}$ 
to obtain:
\bb \label{feas.operator.alg} \vetx^{k+1} = \V_{\X}(\vetx^{k+1/2}).\ee

\item[Step 4.] Update $k$ and return to Step \emph{1}.
\end{description}
\end{description}
\end{algorithm}

\section{Convergence analysis} \label{sec.4}

Along this section, we denote $ F_{S_\ell}(\vetx) = \sum_{s=1}^{m(\ell)} f_{i_{s}^{\ell}}(\vetx)$ for each $\ell = 1, \dots, P$.
The following subgradient boundedness assumption is key in this paper: for all $\ell$ and $s$,  
\bb \label{limitacao} \ds C_{i_{s}^{\ell}} = \sup_{k \geq 0}\left\{\left\|\vetg \right\| \, | \, \vetg \in \partial f_{i_{s}^{\ell}}(\vetx^k) \cup \partial f_{i_{s}^{\ell}}(\vetx_{i_{s-1}^{\ell}}^{k}) \right\} < \infty.\ee 
Recall that Theorem \ref{conv1} is the main tool for the convergence analysis, so we will show that each of its conditions are 
valid under assumption (\ref{limitacao}). We present auxiliary results in the next two lemmas.

\begin{lemma} \label{lem1} Let $\{\vetx^k \}$ be the sequence generated by Algorithm {\em \ref{alg-proposed}} 
and suppose that subgradient boundedness assumption \emph{(\ref{limitacao})} holds. Then, for each $\ell$ and $s$ and for all $k \geq 0$, we have 
\begin{description}
\item[(i)] \bb \label{ineq1} \ds f_{i_{s}^{\ell}}(\vetx^k) - f_{i_{s}^{\ell}}(\vetx_{i_{s-1}^{\ell}}^{k}) \leq C_{i_{s}^{\ell}} \|\vetx_{i_{s-1}^{\ell}}^{k} - \vetx^k \|. \ee
\item[(ii)] \bb \label{ineq2} \ds \|\vetx_{i_{s}^{\ell}}^{k} - \vetx^k \| \leq \lambda_k \sum_{r=1}^{s} C_{i_{r}^{\ell}}.\ee
\item[(iii)] For all $\vety \in \R^n$, we have
\bb \label{ineq3} \ds \left\langle \sum_{s=1}^{m(\ell)} \vetg_{i_{s}^{\ell}}^{k}, \vety - \vetx^k \right\rangle \leq F_{S_{\ell}}(\vety) - F_{S_{\ell}}(\vetx^k) + 2\lambda_k \sum_{s=2}^{m(\ell)} C_{i_{s}^{\ell}}\left(\sum_{r=1}^{s-1} C_{i_{r}^{\ell}}\right), \ee
where $\vetg_{i_{s}^{\ell}}^{k} \in \partial f_{i_{s}^{\ell}}(\vetx_{i_{s-1}^{\ell}}^{k})$.
\end{description}
\end{lemma}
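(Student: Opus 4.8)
The plan is to establish the three parts in order, since (iii) rests on both (i) and (ii). For part (i), I would invoke the subgradient inequality for $f_{i_{s}^{\ell}}$ at the point $\vetx^k$: choosing any $\vetg \in \partial f_{i_{s}^{\ell}}(\vetx^k)$ gives $f_{i_{s}^{\ell}}(\vetx_{i_{s-1}^{\ell}}^{k}) \geq f_{i_{s}^{\ell}}(\vetx^k) + \langle \vetg, \vetx_{i_{s-1}^{\ell}}^{k} - \vetx^k \rangle$, and rearranging together with the Cauchy--Schwarz inequality yields $f_{i_{s}^{\ell}}(\vetx^k) - f_{i_{s}^{\ell}}(\vetx_{i_{s-1}^{\ell}}^{k}) \leq \|\vetg\|\,\|\vetx_{i_{s-1}^{\ell}}^{k} - \vetx^k \|$. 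Since $\partial f_{i_{s}^{\ell}}(\vetx^k)$ is among the sets entering the supremum defining $C_{i_{s}^{\ell}}$ in (\ref{limitacao}), the bound $\|\vetg\| \leq C_{i_{s}^{\ell}}$ closes part (i). For part (ii), I would telescope the step operators (\ref{step.operators.alg}): writing $\vetx_{i_{s}^{\ell}}^{k} - \vetx^k = \sum_{r=1}^{s}(\vetx_{i_{r}^{\ell}}^{k} - \vetx_{i_{r-1}^{\ell}}^{k}) = -\lambda_k \sum_{r=1}^{s} \vetg_{i_{r}^{\ell}}$, and then applying the triangle inequality together with $\|\vetg_{i_{r}^{\ell}}\| \leq C_{i_{r}^{\ell}}$, which holds because $\vetg_{i_{r}^{\ell}} \in \partial f_{i_{r}^{\ell}}(\vetx_{i_{r-1}^{\ell}}^{k})$, the other set appearing in (\ref{limitacao}).

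Part (iii) is the crux and where the real work lies. I would start from the subgradient inequality for each $f_{i_{s}^{\ell}}$ taken at $\vetx_{i_{s-1}^{\ell}}^{k}$, namely $\langle \vetg_{i_{s}^{\ell}}^{k}, \vety - \vetx_{i_{s-1}^{\ell}}^{k} \rangle \leq f_{i_{s}^{\ell}}(\vety) - f_{i_{s}^{\ell}}(\vetx_{i_{s-1}^{\ell}}^{k})$, and then split the target inner product through the identity $\vety - \vetx_{i_{s-1}^{\ell}}^{k} = (\vety - \vetx^k) + (\vetx^k - \vetx_{i_{s-1}^{\ell}}^{k})$. This produces a leading term already controlled by the subgradient inequality above, plus a cross term $\langle \vetg_{i_{s}^{\ell}}^{k}, \vetx_{i_{s-1}^{\ell}}^{k} - \vetx^k \rangle$ which I would bound via Cauchy--Schwarz and part (ii), used at index $s-1$, by $C_{i_{s}^{\ell}} \lambda_k \sum_{r=1}^{s-1} C_{i_{r}^{\ell}}$.

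The main obstacle is accounting for the factor of $2$ in the error term. It arises because I must replace the undesirable quantity $f_{i_{s}^{\ell}}(\vetx_{i_{s-1}^{\ell}}^{k})$ by $f_{i_{s}^{\ell}}(\vetx^k)$ in order to reconstruct $F_{S_\ell}(\vetx^k)$ after summation. Invoking part (i) followed by part (ii) bounds $f_{i_{s}^{\ell}}(\vetx^k) - f_{i_{s}^{\ell}}(\vetx_{i_{s-1}^{\ell}}^{k})$ by a second copy of $C_{i_{s}^{\ell}} \lambda_k \sum_{r=1}^{s-1} C_{i_{r}^{\ell}}$, so the two error contributions combine into the stated $2\lambda_k C_{i_{s}^{\ell}} \sum_{r=1}^{s-1} C_{i_{r}^{\ell}}$. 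Summing the resulting per-$s$ estimate over $s = 1, \dots, m(\ell)$, the left-hand sides aggregate into $\langle \sum_{s=1}^{m(\ell)} \vetg_{i_{s}^{\ell}}^{k}, \vety - \vetx^k \rangle$, the function-value differences collect into $F_{S_\ell}(\vety) - F_{S_\ell}(\vetx^k)$, and the $s = 1$ error term vanishes since its inner sum is empty, allowing the index to start at $s = 2$ and matching (\ref{ineq3}) exactly.
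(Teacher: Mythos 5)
Your proposal is correct and follows essentially the same route as the paper: part (i) via the subgradient inequality at $\vetx^k$ plus Cauchy--Schwarz, part (ii) by telescoping the step updates, and part (iii) by splitting $\langle \vetg_{i_{s}^{\ell}}^{k}, \vety - \vetx^k\rangle$ into the subgradient term at $\vetx_{i_{s-1}^{\ell}}^{k}$ plus a cross term, with the factor of $2$ arising exactly as you describe from combining the cross-term bound with the (i)--(ii) correction of $f_{i_{s}^{\ell}}(\vetx_{i_{s-1}^{\ell}}^{k})$ to $f_{i_{s}^{\ell}}(\vetx^k)$. No gaps.
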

\begin{proof}
\begin{description}
\item[(i)] By definition of the subdifferential $\ds \partial f_{i_{s}^{\ell}}(\vetx^{k})$, we have
\begin{equation*} \ds f_{i_{s}^{\ell}}(\vetx^k) - f_{i_{s}^{\ell}}(\vetx_{i_{s-1}^{\ell}}^{k}) \leq - \langle \vetv_{i_{s}^{\ell}}^{k}, \vetx_{i_{s-1}^{\ell}}^{k} - \vetx^k \rangle,\end{equation*}
where $\vetv_{i_{s}^{\ell}}^{k} \in \partial f_{i_{s}^{\ell}}(\vetx^k)$. 
The result follows from the Cauchy-Schwarz inequality and the subgradient boundedness assumption (\ref{limitacao}).

\item[(ii)] Developing the equation $\ds \vetx_{i_{s}^{\ell}}^{k} = \vetx_{i_{s-1}^{\ell}}^{k} - \lambda_k \vetg_{i_{s}^{\ell}}^{k}$ for each $s=1, \dots,m(\ell)$ yields,
\begin{equation*}\begin{array}{lllll} \ds \|\vetx_{i_{1}^{\ell}}^{k} - \vetx^k \| &=& \ds \| \vetx^k - \lambda_k \vetg_{i_{1}^{\ell}}^{k} - \vetx^k \| &\leq& \lambda_k C_{i_{1}^{\ell}},
	\\  \ds \| \vetx_{i_{2}^{\ell}}^{k} - \vetx^k \| &\leq& \ds \| \vetx_{i_{1}^{\ell}}^{k} - \vetx^k \| + \lambda_k \| \vetg_{i_{2}^{\ell}}^{k} \| &\leq& \lambda_k (C_{i_{1}^{\ell}} + C_{i_{2}^{\ell}}),
	\\ \qquad \vdots & & \qquad \qquad \vdots & & \qquad \vdots
	\\  \ds \| \vetx_{i_{s}^{\ell}}^{k} - \vetx^k \| &\leq& \| \vetx_{i_{s-1}^{\ell}}^{k} - \vetx^k \| + \lambda_k \| \vetg_{i_{s}^{\ell}}^{k} \| &\leq& \ds \lambda_k \sum_{r=1}^{s} C_{i_{r}^{\ell}}. \end{array} \end{equation*}

\item[(iii)] By Cauchy-Schwarz inequality and definition of the subdifferential $\ds \partial f_{i_{s}^{\ell}}(\vetx_{i_{s-1}^{\ell}}^{k})$ we have,
$$\begin{array}{lll} \ds \left\langle \sum_{s=1}^{m(\ell)} \vetg_{i_{s}^{\ell}}^{k}, \vety - \vetx^k \right\rangle &=& \ds \sum_{s=1}^{m(\ell)} \langle  \vetg_{i_{s}^{\ell}}^{k}, \vetx_{i_{s-1}^{\ell}}^{k} - \vetx^k \rangle + \sum_{s=1}^{m(\ell)} \langle \vetg_{i_{s}^{\ell}}^{k}, \vety - \vetx_{i_{s-1}^{\ell}}^{k} \rangle \\
														   &\leq& \ds \sum_{s=1}^{m(\ell)} \| \vetg_{i_{s}^{\ell}}^{k} \| \|\vetx^k - \vetx_{i_{s-1}^{\ell}}^{k} \| + \sum_{s=1}^{m(\ell)} ( f_{i_{s}^{\ell}}(\vety) - f_{i_{s}^{\ell}}(\vetx_{i_{s-1}^{\ell}}^{k}) ) \\
														   &=& \ds \sum_{s=2}^{m(\ell)} \| \vetg_{i_{s}^{\ell}}^{k} \| \|\vetx^k - \vetx_{i_{s-1}^{\ell}}^{k} \| + F_{S_{\ell}}(\vety) - F_{S_{\ell}}(\vetx^k) \\
														   && \qquad \ds - \sum_{s=2}^{m(\ell)} (f_{i_{s}^{\ell}}(\vetx_{i_{s-1}^{\ell}}^{k}) - f_{i_{s}^{\ell}}(\vetx^k) ).
\end{array}$$
By eqs. (\ref{ineq1}), (\ref{ineq2}) and the subgradient boundedness assumption (\ref{limitacao}) we obtain,
$$\begin{array}{lll}
\ds \left\langle \sum_{s=1}^{m(\ell)} \vetg_{i_{s}^{\ell}}^{k}, \vety - \vetx^k \right\rangle &\leq& \ds \sum_{s=2}^{m(\ell)} \| \vetg_{i_{s}^{\ell}}^{k} \| \| \vetx^k - \vetx_{i_{s-1}^{\ell}}^{k} \| + F_{S_{\ell}}(\vety) - F_{S_{\ell}}(\vetx^k) \\
												 && \qquad \ds + \sum_{s=2}^{m(\ell)}  \|\vetv_{i_{s}^{\ell}}^{k} \| \| \vetx^k - \vetx_{i_{s-1}^{\ell}}^{k} \| \\
												 &\leq& \ds F_{S_{\ell}}(\vety) - F_{S_{\ell}}(\vetx^k) + \sum_{s=2}^{m(\ell)} ( \| \vetg_{i_{s}^{\ell}}^{k} \| + \| \vetv_{i_{s}^{\ell}}^{k} \| )\left(\lambda_k \sum_{r=1}^{s-1} C_{i_{r}^{\ell}}\right) \\
												 &\leq& \ds F_{S_{\ell}}(\vety) - F_{S_{\ell}}(\vetx^k) + 2 \lambda_k \sum_{s=2}^{m(\ell)} C_{i_{s}^{\ell}}\left(\sum_{r=1}^{s-1} C_{i_{r}^{\ell}}\right).
\end{array}$$
\end{description}
\end{proof}

The following Lemma is useful to analyze the convergence of the Algorithm \ref{alg-proposed}.

\begin{lemma} \label{lem2} Let $\{\vetx^k \}$ be the sequence generated by Algorithm {\em \ref{alg-proposed}} 
and suppose that assumption \emph{(\ref{limitacao})} holds.
Then, there is a positive constant $C$ such that, for all $\vety \in \Y \supset \X$ and for all $k \geq 0$ we have
\bb \label{desig} \ds \| \OO_f(\lambda_k, \vetx^k) - \vety \|^2 \leq \| \vetx^{k} - \vety \|^2 - \frac{2}{P} \lambda_{k} (f(\vetx^{k}) - f(\vety)) + C \lambda_{k}^2, \ee
\end{lemma}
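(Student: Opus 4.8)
The plan is to exploit that $\OO_f(\lambda_k,\vetx^k)$ is a convex combination of the string end-points and to reduce the estimate to a per-string bound fed by Lemma \ref{lem1}(iii). First I would write $\OO_f(\lambda_k,\vetx^k) - \vety = \sum_{\ell=1}^P w_\ell(\vetx_\ell^k - \vety)$, using $\sum_\ell w_\ell = 1$ (condition \textbf{(v)}). Since the $w_\ell$ are nonnegative and sum to one, convexity of $\|\cdot\|^2$ (Jensen's inequality) gives $\|\OO_f(\lambda_k,\vetx^k) - \vety\|^2 \leq \sum_{\ell=1}^P w_\ell\,\|\vetx_\ell^k - \vety\|^2$, so it suffices to bound each $\|\vetx_\ell^k - \vety\|^2$ separately.

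For the per-string estimate, I would use that the step operators (\ref{step.operators.alg}) telescope to $\vetx_\ell^k = \vetx^k - \lambda_k \sum_{s=1}^{m(\ell)} \vetg_{i_s^\ell}^k$. Expanding the square yields $\|\vetx_\ell^k - \vety\|^2 = \|\vetx^k - \vety\|^2 + 2\lambda_k\langle \sum_{s} \vetg_{i_s^\ell}^k, \vety - \vetx^k\rangle + \lambda_k^2\,\|\sum_{s} \vetg_{i_s^\ell}^k\|^2$. The cross term is exactly the quantity estimated in Lemma \ref{lem1}(iii), which bounds it by $F_{S_\ell}(\vety) - F_{S_\ell}(\vetx^k)$ plus a term of order $\lambda_k$; the quadratic term is controlled by the triangle inequality and assumption (\ref{limitacao}) via $\|\sum_{s} \vetg_{i_s^\ell}^k\|^2 \leq (\sum_{s} C_{i_s^\ell})^2$. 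Every contribution beyond $F_{S_\ell}(\vety) - F_{S_\ell}(\vetx^k)$ carries an extra factor of $\lambda_k$, so they collect into a single $\lambda_k^2$ term with a string constant depending only on the $C_{i_s^\ell}$.

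Finally I would recombine the strings. Multiplying by $w_\ell$ and summing, the leading term reproduces $\|\vetx^k - \vety\|^2$ (again by $\sum_\ell w_\ell = 1$), while the function terms assemble into $2\lambda_k \sum_\ell w_\ell\,(F_{S_\ell}(\vety) - F_{S_\ell}(\vetx^k))$. The key identity here is that, by conditions \textbf{(i)} and \textbf{(iv)}, $f = P\sum_\ell w_\ell F_{S_\ell}$, whence $\sum_\ell w_\ell(F_{S_\ell}(\vety) - F_{S_\ell}(\vetx^k)) = \frac{1}{P}(f(\vety) - f(\vetx^k))$, producing precisely the $-\frac{2}{P}\lambda_k(f(\vetx^k) - f(\vety))$ term. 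The remaining $\lambda_k^2$ contributions then define the constant $C$.

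The main subtlety I would watch is twofold. First, pinning down the coefficient $2/P$ exactly, which hinges on correctly identifying $\sum_\ell w_\ell F_{S_\ell}$ with $f/P$ through the weight structure in \textbf{(i)}, \textbf{(iv)}, \textbf{(v)} — an off-by-$P$ slip is the most likely error. Second, verifying that $C$ is genuinely independent of $k$, which is guaranteed because the $C_{i_s^\ell}$ in (\ref{limitacao}) are defined as suprema over all $k$, so the per-string constants (and hence their $w_\ell$-weighted sum) are uniform in $k$. Everything else is routine expansion plus a direct application of Lemma \ref{lem1}.
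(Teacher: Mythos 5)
Your proposal is correct and follows essentially the same route as the paper: telescope the string updates to get $\vetx_{\ell}^{k} = \vetx^k - \lambda_k \sum_{s} \vetg_{i_{s}^{\ell}}^{k}$, invoke Lemma \ref{lem1}(iii) for the cross term, bound the quadratic term via (\ref{limitacao}), and use $f = P\sum_{\ell} w_\ell F_{S_{\ell}}$ to obtain the $2/P$ coefficient. The only difference is that you apply Jensen's inequality to $\|\cdot\|^2$ before expanding, whereas the paper expands $\bigl\| \sum_{\ell} w_\ell (\vetx_{\ell}^{k} - \vety) \bigr\|^2$ directly as a single square; this costs nothing beyond a marginally larger constant ($\sum_{\ell} w_\ell \bigl(\sum_{s} C_{i_{s}^{\ell}}\bigr)^2$ in place of $\bigl(\sum_{\ell} w_\ell \sum_{s} C_{i_{s}^{\ell}}\bigr)^2$), which is immaterial since the lemma only asserts existence of some $C$.
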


\begin{proof}
Initially, we can develop equation (\ref{step.operators.alg}) for each $\ell = 1, \dots, P$ and obtain $\ds \vetx_{\ell}^{k} = \vetx^k - \lambda_k \sum_{s=1}^{m(\ell)} \vetg_{i_{s}^{\ell}}^{k}$, 
where $\vetg_{i_{s}^{\ell}}^{k} \in \partial f_{i_{s}^{\ell}}(\vetx_{i_{s-1}^{\ell}}^{k})$. Thus, from equation 
(\ref{opt.operator.alg}) we have for all $k \geq 0$,
\begin{equation*} \begin{array}{lll} 
\ds \OO_f(\lambda_k, \vetx^k) &=& \ds \sum_{\ell=1}^{P} w_{\ell} \vetx_{\ell}^{k} \\
		&=& \ds \sum_{\ell=1}^{P} w_{\ell} \left(\vetx^k - \lambda_k \sum_{s=1}^{m(\ell)} \vetg_{i_{s}^{\ell}}^{k} \right) \\
		&=& \ds \vetx^k - \lambda_k \sum_{\ell=1}^{P} w_{\ell} \sum_{s=1}^{m(\ell)} \vetg_{i_{s}^{\ell}}^{k}.
\end{array}\end{equation*}	
Using the above equation we obtain for all $\vety \in \Y$ and for all $k \geq 0$,
$$\begin{array}{lll}
\ds \| \OO_f(\lambda_k, \vetx^k) - \vety \|^2 &=& \ds \left\| \vetx^k - \vety - \lambda_k \sum_{\ell=1}^{P} w_\ell \sum_{s=1}^{m(\ell)} \vetg_{i_{s}^{\ell}}^{k} \right\|^2 \\
					 &=& \ds \|\vetx^k - \vety \|^2 - 2 \left\langle \vetx^k - \vety, \, \lambda_k \sum_{\ell=1}^{P} w_\ell \sum_{s=1}^{m(\ell)} \vetg_{i_{s}^{\ell}}^{k} \right\rangle + \left\| \lambda_k \sum_{\ell=1}^{P} w_\ell \sum_{s=1}^{m(\ell)} \vetg_{i_{s}^{\ell}}^{k} \right\|^2 \\
					 &=& \ds \|\vetx^k - \vety \|^2 + 2 \lambda_k \sum_{\ell=1}^{P} w_\ell \left\langle \sum_{s=1}^{m(\ell)} \vetg_{i_{s}^{\ell}}^{k}, \, \vety - \vetx^k \right\rangle + \lambda_{k}^{2} \left\| \sum_{\ell=1}^{P} w_\ell \sum_{s=1}^{m(\ell)} \vetg_{i_{s}^{\ell}}^{k} \right\|^2.
\end{array}$$
Now, using Lemma \ref{lem1} \textbf{(iii)}, triangle inequality and $P\sum_{\ell=1}^{P} w_\ell F_{S_{\ell}}(\vetx) = f(\vetx)$ we have,
$$\begin{array}{lll}
\ds \|\OO_f(\lambda_k, \vetx^k) - \vety \|^2 &\leq& \ds \| \vetx^k - \vety \|^2 - 2\lambda_k \sum_{\ell=1}^{P} w_\ell \left[F_{S_{\ell}}(\vetx^k) - F_{S_{\ell}}(\vety) - 2\lambda_k\sum_{s=2}^{m(\ell)} C_{i_{s}^{\ell}} \left(\sum_{r=1}^{s-1} C_{i_{r}^{\ell}}\right)\right] \\
					 && \ds \qquad + \lambda_{k}^{2} \left\| \sum_{\ell=1}^{P} w_\ell \sum_{s=1}^{m(\ell)} \vetg_{i_{s}^{\ell}}^{k}\right\|^2 \\
					 &\leq& \ds \| \vetx^k - \vety \|^2 - 2 \lambda_k \left(\sum_{\ell=1}^{P} w_\ell F_{S_{\ell}}(\vetx^k) - \sum_{\ell=1}^{P} w_\ell F_{S_{\ell}}(\vety) \right) \\
					 && \ds \qquad + 4 \lambda_{k}^{2} \sum_{\ell=1}^{P} w_\ell \left[\sum_{s=2}^{m(\ell)} C_{i_{s}^{\ell}} \left(\sum_{r=1}^{s-1} C_{i_{r}^{\ell}}\right)\right] + \lambda_{k}^{2} \left( \sum_{\ell=1}^{P} w_\ell \sum_{s=1}^{m(\ell)} \| \vetg_{i_{s}^{\ell}}^{k} \| \right)^2 \\
					 &=& \ds \| \vetx^k - \vety \|^2 - 2\frac{\lambda_k}{P} (f(\vetx^k) - f(\vety)) + 4\lambda_{k}^{2}\sum_{\ell=1}^{P} w_\ell \left[\sum_{s=2}^{m(\ell)} C_{i_{s}^{\ell}}\left(\sum_{r=1}^{s-1} C_{i_{r}^{\ell}}\right)\right] \\
					 && \ds \qquad + \lambda_{k}^{2} \left( \sum_{\ell=1}^{P} w_\ell \sum_{s=1}^{m(\ell)} \| \vetg_{i_{s}^{\ell}}^{k} \| \right)^2.
\end{array}$$
Finally, by subgradient boundedness assumption (\ref{limitacao}), we obtain for all $\vety \in \Y$ and for all $k \geq 0$,
$$\begin{array}{lll}
\ds \| \OO_f(\lambda_k, \vetx^k) - \vety \|^2 &\leq& \ds \| \vetx^k - \vety \|^2 - 2\frac{\lambda_k}{P} (f(\vetx^k)- f(\vety)) \\
					 && \qquad + \ds \lambda_{k}^{2} \left[ 4\sum_{\ell=1}^{P} w_\ell \left[\sum_{s=2}^{m(\ell)} C_{i_{s}^{\ell}}\left(\sum_{r=1}^{s-1} C_{i_{r}^{\ell}} \right) \right] + \left( \sum_{\ell=1}^{P} w_\ell \sum_{s=1}^{m(\ell)} C_{i_{s}^{\ell}} \right)^2 \right] \\
					 &=& \ds \| \vetx^{k} - \vety \|^2 - \frac{2}{P} \lambda_{k} (f(\vetx^{k}) - f(\vety)) + C \lambda_{k}^2.
\end{array}$$
\end{proof}

The next two propositions aim at showing
that, under some mild additional hypothesis, $\OO_f$ and $\V_{\X}$ satisfy Conditions~\ref{cond1}-\ref{cond2}.

\begin{proposition} \label{prop otim} Let $\{\vetx^k \}$ be the sequence generated by Algorithm {\em \ref{alg-proposed}} and 
suppose that subgradient boundedness assumption \emph{(\ref{limitacao})} holds. 
Then, if $\lambda_k \rightarrow 0^{+}$, the optimality operator $\OO_f$ satisfies Condition \emph{\ref{cond1}} of Theorem 
\emph{\ref{conv1}}.
\end{proposition}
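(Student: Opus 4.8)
The plan is to extract every requirement of Condition~\ref{cond1} directly from Lemma~\ref{lem2}, invoking the hypothesis $\lambda_k \to 0^+$ at exactly one point. First I would apply Lemma~\ref{lem2} with the arbitrary fixed point $\vety = \vetx \in \X$, which is admissible because $\X \subset \Y$. This gives
\bb \ds \| \OO_f(\lambda_k, \vetx^k) - \vetx \|^2 \leq \| \vetx^{k} - \vetx \|^2 - \frac{2}{P} \lambda_{k} (f(\vetx^{k}) - f(\vetx)) + C \lambda_{k}^2. \ee
Matching this against (\ref{cond1-eq1}), I would set $\alpha := 2/P > 0$ and $\rho_k := C \lambda_k \geq 0$, so that $C\lambda_k^2 = \lambda_k \rho_k$ and the first inequality of Condition~\ref{cond1} holds verbatim for every $k \geq 0$.

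Next I would verify that the error term vanishes. Since $\rho_k = C\lambda_k$ and $\lambda_k \to 0^+$, we immediately get $\rho_k \to 0$. This is the only step in which the diminishing step-size hypothesis enters: its sole role here is to promote the quadratic remainder $C\lambda_k^2$ of Lemma~\ref{lem2} into an $o(\lambda_k)$ term, that is, one of the form $\lambda_k \rho_k$ with $\rho_k \to 0$.

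Finally, for the boundedness estimate (\ref{cond1-eq2}) I would reuse the explicit representation of the optimality operator established at the beginning of the proof of Lemma~\ref{lem2}, namely $\OO_f(\lambda_k, \vetx^k) = \vetx^k - \lambda_k \sum_{\ell=1}^{P} w_\ell \sum_{s=1}^{m(\ell)} \vetg_{i_{s}^{\ell}}^{k}$. Subtracting $\vetx^k$, taking norms, and applying the triangle inequality together with the subgradient boundedness assumption (\ref{limitacao}) yields
\bb \ds \| \vetx^k - \OO_f(\lambda_k, \vetx^k) \| = \lambda_k \left\| \sum_{\ell=1}^{P} w_\ell \sum_{s=1}^{m(\ell)} \vetg_{i_{s}^{\ell}}^{k} \right\| \leq \lambda_k \sum_{\ell=1}^{P} w_\ell \sum_{s=1}^{m(\ell)} C_{i_{s}^{\ell}}, \ee
so that (\ref{cond1-eq2}) holds with $\gamma := \sum_{\ell=1}^{P} w_\ell \sum_{s=1}^{m(\ell)} C_{i_{s}^{\ell}}$, a quantity that is finite and positive by (\ref{limitacao}).

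I do not anticipate a genuine obstacle: Lemma~\ref{lem2} (and through it Lemma~\ref{lem1}) already absorbs the delicate subgradient bookkeeping, so the proposition reduces to reading off the constants correctly. The one point deserving care is uniformity in $k$: the constants $\alpha$ and $\gamma$, as well as the factor $C$ appearing in $\rho_k$, must not depend on the iteration index. This is guaranteed precisely because the bounds $C_{i_{s}^{\ell}}$ in (\ref{limitacao}) are defined as a supremum over all $k \geq 0$, so every estimate above is valid simultaneously for the entire sequence $\{\vetx^k\}$.
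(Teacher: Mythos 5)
Your proposal is correct and follows essentially the same route as the paper's own proof: apply Lemma~\ref{lem2} with $\vety=\vetx\in\X$ to read off $\alpha=2/P$ and $\rho_k=C\lambda_k$, then use the explicit form $\OO_f(\lambda_k,\vetx^k)=\vetx^k-\lambda_k\sum_{\ell=1}^{P}w_\ell\sum_{s=1}^{m(\ell)}\vetg_{i_s^\ell}^{k}$ with the triangle inequality and (\ref{limitacao}) to get $\gamma=\sum_{\ell=1}^{P}w_\ell\sum_{s=1}^{m(\ell)}C_{i_s^\ell}$. Your added remarks on where $\lambda_k\to 0^+$ enters and on uniformity of the constants in $k$ are accurate and consistent with the paper.
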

\begin{proof}
Lemma \ref{lem2} ensures that for all $\vetx \in \X \subset \Y$ we have,
\begin{equation*} \ds \| \OO_f (\lambda_k, \vetx^k) - \vetx \|^2 \leq \| \vetx^k - \vetx \|^2 - \frac{2}{P} \lambda_k (f(\vetx^k) - f(\vetx)) + C \lambda_{k}^{2}. \end{equation*}
Defining $\alpha = 2 / P$ and $\rho_k = \lambda_k C \geq 0$, equation~(\ref{cond1-eq1}) is satisfied and $\rho_k \to 0$. 
Furthermore, by triangle inequality and subgradient boundedness assumption (\ref{limitacao}) we have,
\begin{eqnarray*} \ds \| \OO_f (\lambda_k, \vetx^k) - \vetx^k \| &=& \ds \left\|\sum_{\ell=1}^{P} w_{\ell} \vetx_{\ell}^{k} - \vetx^k\right\| \\
								      &=& \ds \left\| \vetx^k - \lambda_k \sum_{\ell=1}^{P} w_\ell \sum_{s=1}^{m(\ell)} \vetg_{i_{s}^{\ell}}^{k} - \vetx^k \right\| \\
								      &=& \ds \lambda_k \left\|\sum_{\ell=1}^{P} w_\ell \sum_{s=1}^{m(\ell)} \vetg_{i_{s}^{\ell}}^{k} \right\| \\
								      &\leq& \ds \lambda_k \sum_{\ell=1}^{P} w_\ell \sum_{s=1}^{m(\ell)} C_{i_{s}^{\ell}},
\end{eqnarray*}
implying that equation~(\ref{cond1-eq2}) is satisfied with $\gamma = \sum_{\ell=1}^{P} w_\ell \sum_{s=1}^{m(\ell)} C_{i_{s}^{\ell}}$. Therefore, 
Condition~\ref{cond1} is satisfied.
\end{proof}

\begin{proposition} \emph{(\cite{helou09}, Proposition 3.4)} \label{prop viab} 
Let $\vetx^{k+1/2}$ given in \emph{(\ref{opt.operator.alg})} be the first element $\vets_{0}^{k}$ of the sequence 
$\{ \vets_{i}^{k} \}$, $i = 1, \dots, t$, given as $\vets_{i}^{k}:= \SSS_{h_i}^{\nu_i}(\vets_{i-1}^{k})$. In this sense, consider 
that $\veth_{i}^{k} \in \partial h_i(\vets_{i-1}^{k})$. Suppose that for some index $j$, the set $lev_0(h_j)$ is bounded. 
In addition, consider that all sequences 
$\left\{\veth_{i}^{k}\right\}$ are bounded.
Then, $\V_{\X}$ satisfies Condition \emph{\ref{cond2}} of Theorem \emph{\ref{conv1}}.
\end{proposition}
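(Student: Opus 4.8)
The plan is to reduce the whole statement to a single-step Fej\'er inequality for the relaxed subgradient projection and then telescope. First I would fix one constraint $h$ with $\veth \in \partial h(\vetx)$ and a point $\vety \in lev_0(h)$, and establish $\|\SSS_{h}^{\nu}(\vetx) - \vety\|^2 \leq \|\vetx - \vety\|^2 - \nu(2-\nu)\,[h(\vetx)]_{+}^2/\|\veth\|^2$. When $h(\vetx) \leq 0$ or $\veth = \mathbf{0}$ the operator in (\ref{subgrad_projection}) is the identity and the bound is trivial; when $h(\vetx) > 0$ I would expand $\|\vetx - \nu\frac{h(\vetx)}{\|\veth\|^2}\veth - \vety\|^2$ and use the subgradient inequality $\langle \veth, \vetx - \vety \rangle \geq h(\vetx) - h(\vety) \geq h(\vetx)$ (valid because $h(\vety) \leq 0$) to control the cross term. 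The very same computation yields the fixed-point half of Condition~\ref{cond2}: if $\vetx \in \X$ then $[h_i(\vetx)]_{+} = 0$ for every $i$, so each $\SSS_{h_i}^{\nu_i}$ fixes $\vetx$ and hence so does the composition $\V_{\X}$ in (\ref{viabilidade}).

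Writing $\vets_0^k = \vetx^{k+1/2}$ and $\vets_i^k = \SSS_{h_i}^{\nu_i}(\vets_{i-1}^k)$, I would apply the single-step bound at each $i$ with a common $\vety \in \X \subseteq lev_0(h_i)$ and telescope the composition. Since $\nu_i \in [\sigma, 2-\sigma]$ gives $\nu_i(2-\nu_i) \geq \sigma(2-\sigma) =: \theta > 0$, and the hypothesis supplies $\|\veth_i^k\| \leq H := \sup_{i,k}\|\veth_i^k\| < \infty$, this produces the global estimate $\|\V_{\X}(\vetx^{k+1/2}) - \vety\|^2 \leq \|\vetx^{k+1/2} - \vety\|^2 - (\theta/H^2)\sum_{i=1}^t [h_i(\vets_{i-1}^k)]_{+}^2$ for every $\vety \in \X$ and every $k$. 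What remains is to turn the nonnegative quantity $\Phi_k := \sum_{i=1}^t [h_i(\vets_{i-1}^k)]_{+}^2$ into a \emph{uniform} lower bound $\epsilon_\delta$ whenever $d_{\X}(\vetx^{k+1/2}) \geq \delta$; note $\Phi_k$ does not depend on $\vety$, so it suffices to produce such a bound once.

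To obtain $\epsilon_\delta$ I would argue by contradiction and compactness. If no bound existed there would be, for some $\delta > 0$, a sequence of indices along which $d_{\X}(\vets_0) \geq \delta$ yet $\Phi_k \to 0$, forcing $[h_i(\vets_{i-1})]_{+} \to 0$ for each $i$. The crucial step is to extract a bounded orbit: because $lev_0(h_j)$ is bounded and $h_j$ is convex, every nonempty sublevel set $\{h_j \leq c\}$ is bounded (all share the same recession cone), so $[h_j(\vets_{j-1})]_{+} \to 0$ confines $\vets_{j-1}$ to a bounded set, and forward Fej\'er monotonicity together with the displacement bound $\|\vets_i - \vets_{i-1}\| = \nu_i [h_i(\vets_{i-1})]_{+}/\|\veth_i\| \leq \nu_i\, d_{lev_0(h_i)}(\vets_{i-1})$ (again from the subgradient inequality) would propagate boundedness along the string. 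Passing to a convergent subsequence $\vets_i \to \bar{\vets}_i$, continuity of the $h_i$ gives $\bar{\vets}_{i-1} \in lev_0(h_i)$ while the displacements tend to $0$, so all $\bar{\vets}_i$ collapse to a single $\bar{\vets} \in \bigcap_i lev_0(h_i) = \X$; hence $\vetx^{k+1/2} = \vets_0 \to \bar{\vets} \in \X$, contradicting $d_{\X}(\vets_0) \geq \delta$.

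I expect the boundedness step to be the main obstacle. Once $\vets_{j-1}$ is bounded, the forward iterates $\vets_j, \dots, \vets_t$ are immediately controlled by Fej\'er monotonicity, but propagating boundedness \emph{backward} to $\vets_0 = \vetx^{k+1/2}$ is delicate, since a small violation $[h_i(\vets_{i-1})]_{+}$ need not bound the corresponding displacement when $lev_0(h_i)$ is unbounded and $\|\veth_i\|$ is allowed to approach $0$. This is precisely where the two standing hypotheses, bounded $lev_0(h_j)$ and uniformly bounded subgradients $\{\veth_i^k\}$, must be used in tandem, and where the uniformity of $\epsilon_\delta$ over all $k$ is ultimately secured.
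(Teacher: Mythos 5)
The paper itself does not prove this proposition; it is quoted from \cite{helou09} (Proposition 3.4 there), so your proposal is supplying an argument the paper omits. Your overall strategy --- single-step Fej\'er inequality for $\SSS_{h}^{\nu}$, telescoping, then a contradiction/compactness argument using the one bounded level set --- is the standard and correct route, and the first two stages (including the fixed-point half of Condition \ref{cond2} and the identity $\|\SSS_{h}^{\nu}(\vetx)-\vety\|^2\leq\|\vetx-\vety\|^2-\nu(2-\nu)[h(\vetx)]_+^2/\|\veth\|^2$) are fine. But the obstacle you flag in your last paragraph is a genuine gap, and it is created by your own reduction: by replacing the true per-step decrease $\nu_i(2-\nu_i)[h_i(\vets_{i-1}^k)]_+^2/\|\veth_i^k\|^2$ with the weaker lower bound $(\theta/H^2)[h_i(\vets_{i-1}^k)]_+^2$, you throw away exactly the quantity that controls the displacements. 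Assuming only $\Phi_k=\sum_i[h_i(\vets_{i-1}^k)]_+^2\to 0$ does \emph{not} force $\|\vets_i^k-\vets_{i-1}^k\|=\nu_i[h_i(\vets_{i-1}^k)]_+/\|\veth_i^k\|\to 0$, because $\|\veth_i^k\|$ may vanish: for the convex function $h(x,y)=\sqrt{x^2+y^2}-y$ along $(1,y)$ with $y\to\infty$ one has $[h]_+\to 0$, $\|\nabla h\|\to 0$, yet the displacement tends to $\nu/2$ and the distance to $lev_0(h)$ stays equal to $1$. So the step ``the displacements tend to $0$, hence all $\bar{\vets}_i$ collapse to a single $\bar{\vets}$'' is unjustified as written, and the backward propagation of boundedness to $\vets_0^k$ fails with it.

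The repair is to run the contradiction on the correct quantity. Suppose $d_{\X}(\vets_0^k)\geq\delta$ along a subsequence while the total decrease $D_k:=\sum_i\nu_i(2-\nu_i)[h_i(\vets_{i-1}^k)]_+^2/\|\veth_i^k\|^2\to 0$. Then each term tends to $0$, which yields \emph{two} conclusions simultaneously: (a) since the per-step decrease equals $\frac{2-\nu_i}{\nu_i}\|\vets_i^k-\vets_{i-1}^k\|^2\geq\frac{\sigma}{2-\sigma}\|\vets_i^k-\vets_{i-1}^k\|^2$, all displacements tend to $0$; and (b) since $[h_i(\vets_{i-1}^k)]_+\leq\|\veth_i^k\|\cdot\bigl([h_i(\vets_{i-1}^k)]_+/\|\veth_i^k\|\bigr)\leq H\,[h_i(\vets_{i-1}^k)]_+/\|\veth_i^k\|\to 0$, all constraint violations tend to $0$ --- this is where the boundedness of $\{\veth_i^k\}$ is actually needed. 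Now (b) with the bounded set $lev_0(h_j)$ confines $\vets_{j-1}^k$ to a bounded set (all nonempty sublevel sets of a finite convex function share the same recession cone, as you say), and (a) transfers this boundedness to every $\vets_i^k$, in particular to $\vets_0^k=\vetx^{k+1/2}$, in both directions along the string. Extracting a convergent subsequence and using (a) again, all $\vets_i^k$ converge to one point $\bar{\vets}$, which lies in every $lev_0(h_i)$ by (b) and continuity, hence $\bar{\vets}\in\X$, contradicting $d_{\X}(\vets_0^k)\geq\delta$. With this modification your argument closes and gives a uniform $\epsilon_\delta$, matching the cited result.
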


The main result of the paper is given next.
\begin{corollary} \label{convSA} Let $\{ \vetx^k \}$ be the sequence generated by Algorithm \emph{\ref{alg-proposed}} and 
suppose that subgradient boundedness assumption \emph{(\ref{limitacao})} holds. In addition, suppose that $lev_0(h_j)$ is bounded 
for some $j$ and all sequences $\{ \veth_{i}^{k} \}$ are bounded. 
Then, if Conditions \emph{\ref{cond3}}-\emph{\ref{cond4}} of Theorem \emph{\ref{conv1}} hold, we have
\begin{equation*} \ds d_{\X^{\ast}}(\vetx^k) \rightarrow 0 \qquad \mbox{and} \qquad \lim_{k \rightarrow \infty} f(\vetx^k) = f^{\ast}.\end{equation*}
\end{corollary}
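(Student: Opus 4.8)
The plan is to treat this as an immediate corollary of Theorem \ref{conv1}: under the stated hypotheses I would verify each of Conditions \ref{cond1}--\ref{cond4} in turn, and then invoke the theorem to read off the conclusion. The substantive analytic work has already been carried out in Lemma \ref{lem2} and the two preceding propositions, so the proof amounts to assembling these pieces and checking that the hypotheses line up.

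First I would dispatch Condition \ref{cond1}. Condition \ref{cond3} is assumed, and it supplies in particular $\lambda_k \to 0^+$; this is exactly the hypothesis required by Proposition \ref{prop otim}. Combined with the subgradient boundedness assumption (\ref{limitacao}), that proposition already establishes that the optimality operator $\OO_f$ satisfies Condition \ref{cond1}, with $\alpha = 2/P$, $\rho_k = \lambda_k C$, and $\gamma = \sum_{\ell=1}^{P} w_\ell \sum_{s=1}^{m(\ell)} C_{i_{s}^{\ell}}$. Nothing further is needed here, since the key descent-type estimate (\ref{desig}) was done in Lemma \ref{lem2}.

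Next I would verify Condition \ref{cond2}. The point to check is that the feasibility step of Algorithm \ref{alg-proposed} fits the template of Proposition \ref{prop viab}: Step 3 applies $\V_{\X}$ to $\vetx^{k+1/2}$, so $\vetx^{k+1/2}$ is precisely the initial element $\vets_0^k$ of the subgradient-projection cascade $\vets_i^k := \SSS_{h_i}^{\nu_i}(\vets_{i-1}^k)$ analyzed there. Since we assume $lev_0(h_j)$ is bounded for some index $j$ and that all subgradient sequences $\{\veth_i^k\}$ are bounded, Proposition \ref{prop viab} yields Condition \ref{cond2} directly.

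Finally, Conditions \ref{cond3} and \ref{cond4} are assumed outright in the statement, so all four conditions of Theorem \ref{conv1} hold, and the conclusions $d_{\X^{\ast}}(\vetx^k) \to 0$ and $\lim_{k \to \infty} f(\vetx^k) = f^{\ast}$ follow at once. I do not expect a genuine analytic obstacle in this step; the only care required is bookkeeping, namely confirming that the diminishing step-size rule indeed provides the $\lambda_k \to 0^+$ needed upstream by Proposition \ref{prop otim}, and that the algorithm's feasibility step is structurally identical to the one analyzed in Proposition \ref{prop viab}. Condition \ref{cond4} itself is the mildest of the hypotheses to interpret, and Remark \ref{rmk1} already indicates when the requirement $[f(\PX(\vetx^k)) - f(\vetx^k)]_{+} \to 0$ is automatic, so I would regard its assumption as reasonable rather than restrictive.
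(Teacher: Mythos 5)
Your proposal is correct and follows exactly the same route as the paper's own proof: invoke Propositions \ref{prop otim} and \ref{prop viab} to certify Conditions \ref{cond1} and \ref{cond2}, note that Conditions \ref{cond3} and \ref{cond4} are hypotheses, and conclude by Theorem \ref{conv1}. Your version is simply a more explicit bookkeeping of the same two-line argument the authors give.
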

\begin{proof}
Propositions \ref{prop otim} and \ref{prop viab} state that operators $\OO_f$ and $\V_{\X}$ 
satisfy Conditions \ref{cond1}-\ref{cond2}. Therefore, the result is obtained applying Theorem \ref{conv1}.
\end{proof}

Recall that we discuss the reasonability of the Condition \ref{cond4} as a hypothesis for this corollary in Remark \ref{rmk1}.

\section{Numerical experiments} \label{sec.5}

In this section, we apply the problem formulation (\ref{1-intro}), \textbf{(i)}-\textbf{(v)} and the method given in Algorithm 
\ref{alg-proposed} to the reconstruction of tomographic images from few views, and we explore results obtained from simulated and 
real data to show that 
the method is competitive when compared with the classic incremental subgradient algorithm. Let us start with a brief description 
of the problem.
The task of reconstructing tomographic images is related to the mathematical problem of finding a function $\psi:\mathbb{R}^2 \rightarrow \mathbb{R}$ from its line integrals along straight lines. More specifically, we desire to find $\psi$ given the following function:
\bb \label{radon_transform} \ds \mathcal{R}[\psi](\theta, t) := \int_{\mathbb{R}} \psi(t(\cos \theta, \sin \theta)^T + s(- \sin \theta, \cos \theta)^T) \, ds. \ee
The application $\psi \mapsto \mathcal{R}[\psi]$ is so-called \textit{Radon Transform} and for a fixed $\theta$, $\mathcal{R}_{\theta}[\psi](t)$ is known as a \textit{projection} of $\psi$.
For a detailed discussion about the physical and mathematical aspects involving tomography and the definition in 
(\ref{radon_transform}), see, for example \cite{natterer86, natterer2001mathematical, herman2009fundamentals}.

We now provide an example to better understand the geometric meaning of the definition of Radon transform. 
We can display $\psi$ as a picture if we assign to each value in $[0,1]$, a grayscale such as in Figure 
\ref{example_radon}-(a). Here we use an artificial image made up of a sum of 
indicator functions of ellipses. The bar on the right indicates the grayscale used. 
We also show the axes $t$, $x$, $y$ and the integration path for a given pair $(\theta, t')$, which appears 
as the dashed line segment. The $t$-axis directions vary according to the number of angles adopted for the 
reconstruction process. 
In general, $\theta \in [0, \pi)$ because $\mathcal{R}[\psi](\theta + \pi, -t) = \mathcal{R}[\psi](\theta,t)$. 
For a fixed angle $\theta$, the projection $\mathcal{R}_{\theta}[\psi](t)$ is computed for each $t' \in [-1,1]$. 
Figure \ref{example_radon}-(b) shows the projections obtained for three fixed angles $\theta$: $\theta_1$, $\theta_2$ and $\theta_3$. 
Its representation given in Figure \ref{example_radon}-(c) as an image in the $\theta \times t$ coordinate system is called 
\textit{sinogram}. We also call the Radon transform at a fixed angle a \emph{view} or \emph{projection}.
\begin{figure}
\centering
\includegraphics[width=\textwidth]{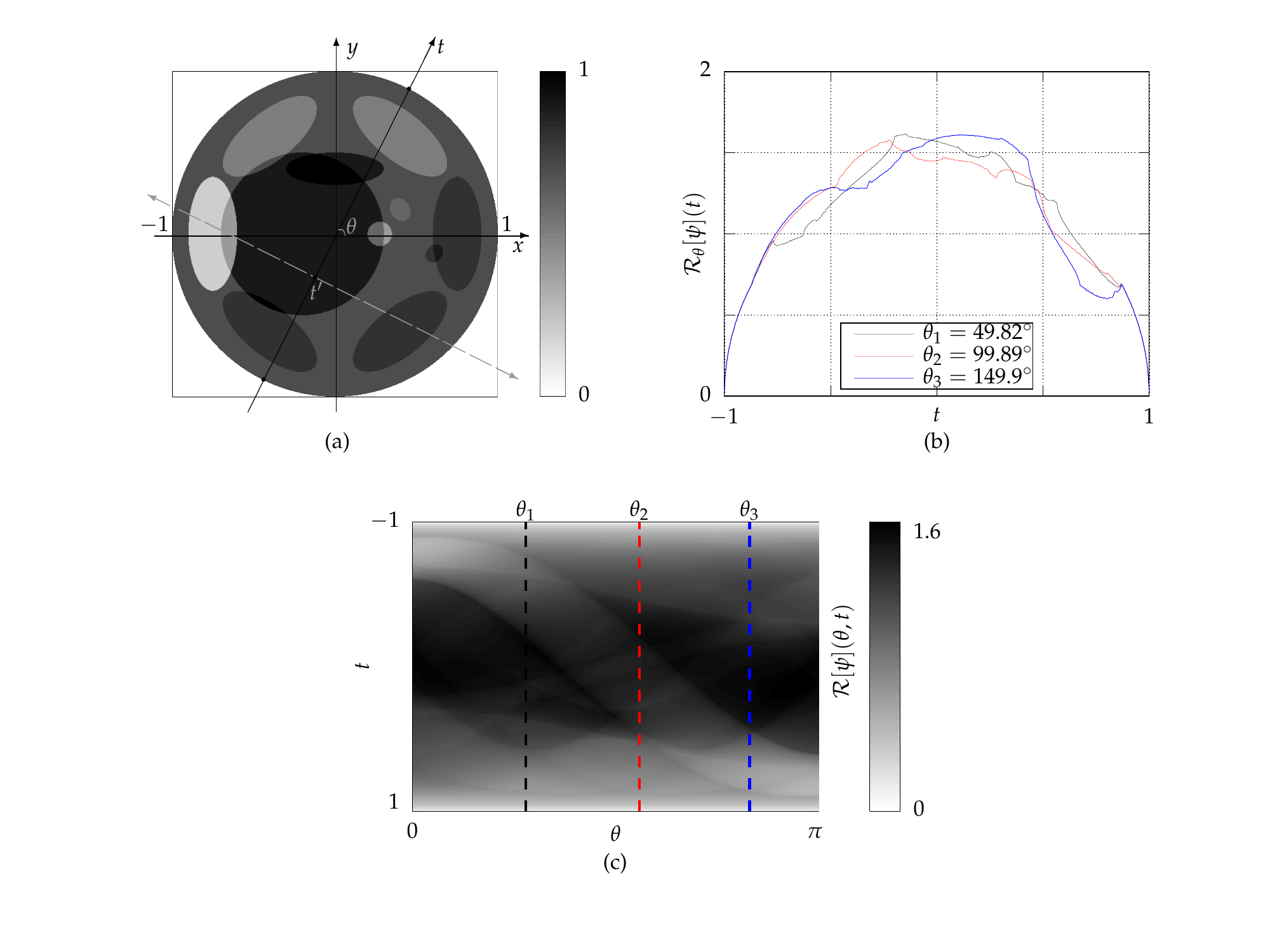}
\caption{(a) An artificial image and the integration path for a given $(\theta, t')$ used to compute a projection 
$\mathcal{R}_{\theta}[\psi](t)$. (b) Projections for three fixed angles. (c) Sinogram obtained from image in (a).} \label{example_radon}
\end{figure}

The Radon transform models the data in a tomographic image reconstruction problem. 
That is, for reconstruction of the function $\psi$, we must go from Figure~\ref{example_radon}-(c) to the desired 
image in Figure~\ref{example_radon}-(a), i.e., it would be desirable to calculate the inverse 
$\mathcal{R}^{-1}$. However, as already mentioned, the Radon Transform is a compact operator and therefore its inversion is 
an ill-conditioned problem. In fact, for $n=2$ and $n=3$, Radon obtained inversion formulas involving first and 
second order differentiation of the data \cite{natterer2001mathematical}, respectively, implying in an unstable process 
due the increase of the error propagation in presence of perturbed data (when noise is present, which may occur due to width 
of the x-ray beam, scatter, hardening of the beam, photon statistics, detector inaccuracies, etc \cite{herman2009fundamentals}). 
Other difficulties arise when using analytical
solutions in practice due to, for example, the limited number of views that often occurs.
This is why more sophisticated optimization models are useful, and it is desirable to use an objective function and constraints that forces the consistency of the solution to the data and guarantees stability of the solution.

\subsection{Experimental work}
In what follows we provide a detailed description of the experimental setup.

\begin{description}
\item a) \textit{The problem}: we consider the task of reconstructing an image from few views. We use a model based in the 
$\ell_1$-norm of the residual associated to the linear system $R \vetx = \imb$, where $R$ is the $m \times n$ Radon matrix, obtained 
through discretization of the Radon transform in (\ref{radon_transform}), 
$\vetx \in \R^n$ is the solution that we want to find, $\imb \approx R \vetx^{\ast} \in \mathbb{R}^m$ represents the data that we have 
for the reconstruction (sinogram), $\vetx^{\ast} \in \mathbb{R}_{+}^{n}$ is the original
image and $m \ll n$. The choice of the $\ell_1$-norm serves as a way to promote robustness to the error $\imb - R \vetx^{\ast}$, which in the case of synchrotron illuminated 
tomography has relatively few very large components and many smaller ones. The small errors are related with the Poisson nature of the data, while the outliers happen because of systematic 
detection failure either due to dust in the ray path or to, e.g., failed detector pixels. 
In this manner, the following optimization problem has suitable features for the use of the Algorithm \ref{alg-proposed}:
\bb \label{problem_rn1} \begin{array}{l} \min \, \, f(\vetx) = \left\| R \vetx - \imb \right\|_1
\\
\mbox{s.t.} \quad h(\vetx) = TV(\vetx) - \tau \leq 0,
\\
\qquad \, \vetx \in \mathbb{R}_{+}^{n}. \end{array} \ee
Note that the objective function $f(\vetx) = \sum_{i=1}^{m} | \langle \vetr_i, \vetx \rangle - b_i| = \sum_{i=1}^{m} f_i(\vetx)$, 
where $\vetr_i$ represents the $i$-th row of $R$. In comparison to (\ref{1-intro}) \textbf{(i)}-\textbf{(v)}, model 
(\ref{problem_rn1}) suggests constant 
weights $w_\ell = 1 / P$ for all $\ell = 1, \dots, P$ to satisfy conditions \textbf{(iv)} and \textbf{(v)}. 
In our tests, we use $P = 1, \dots, 6$ and, to build the sets $S_\ell$, we ordered 
the indices of the data randomly and then distributed in $P$ equally sized sets (or as close to it as possible) 
aiming at satisfying condition \textbf{(iii)}. We also assume that the image $\vetx^{\ast}$ to be reconstructed has large 
approximately constant areas, 
as is often the case in tomographic images.
Operator $TV: \mathbb{R}^n \rightarrow \mathbb{R}_{+}$ is called \textit{total variation} and is defined by
\begin{equation*} TV(\vetx) = \sum_{i=1}^{r_2} \sum_{j=1}^{r_1} \sqrt{\left(x_{i,j} - x_{i-1,j}\right)^2 + \left( x_{i,j} - x_{i,j-1}\right)^2},\end{equation*}
where $\vetx = [x_q]^T$, $q \in \{1, \dots, n \}$, $n = r_1 r_2$ and $x_{i,j} := x_{(i-1)r_1 + j}$.
We have also used the boundary conditions $x_{0,j} = x_{i,0} = 0$ and $\tau = TV(\vetx^{\ast})$.

\item b) \textit{Data generation}: for this simulated experiment, we have considered the reconstruction of the Shepp-Logan 
phantom \cite{kas88}. 
In Figure \ref{phantom}, we show this image using a grayscale version with resolution of $256 \times 256$. 
This resolution will also be used for the reconstruction.
\begin{figure}[htbp!]
\centering
\includegraphics[scale=0.5]{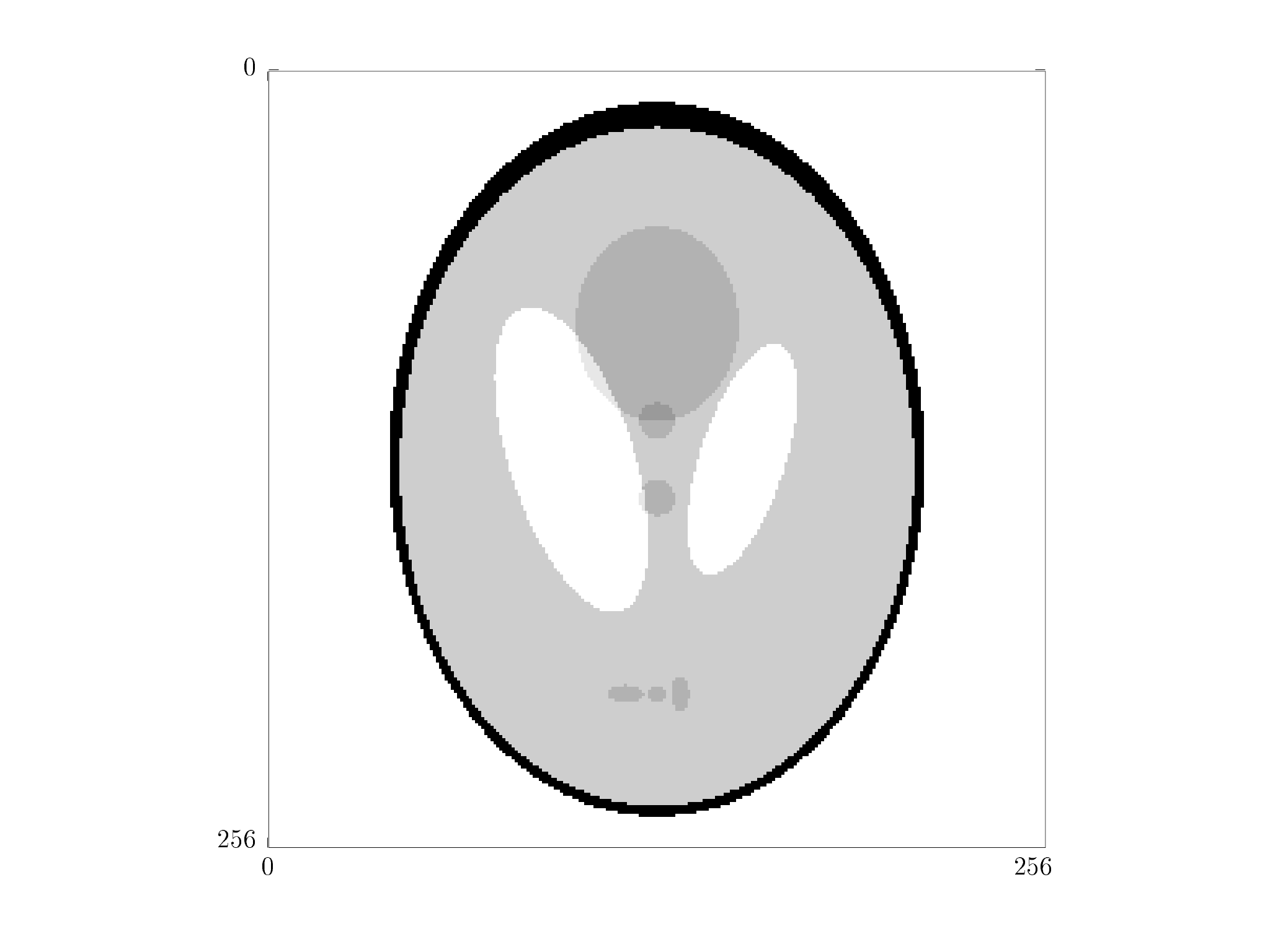}
\caption{Shepp-Logan phantom with resolution $256 \times 256$.}\label{phantom}
\end{figure}
For the vector $\imb$ that contains the data to be used in the reconstruction, we need an efficient routine for calculating the 
product $R \vetx $. We consider $24$ equally spaced angular projections with each sampled at $256$ equally spaced 
points.
We also consider reconstruction of images affected by Poisson noise, i.e., we execute the algorithms using data that was 
generated as samples of a Poisson random variable having as parameter the exact Radon Transform 
of the scaled phantom:
\bb \imb \sim Poisson \left( \kappa \mathcal{R}[\psi](\theta, t) \right), \ee
where the scale factor $\kappa > 0$ is used to control the simulated photon count, i.e., the noise level. Figure \ref{data} shows 
the result obtained for $\imb = R \vetx^{\ast}$, where $\vetx^{\ast}$ is the Shepp-Logan phantom, in both cases, i.e., with and 
without noise in the data.
\begin{figure}[htbp!]
\centering
\subfloat[Noise-free.]{
\includegraphics[scale=0.35]{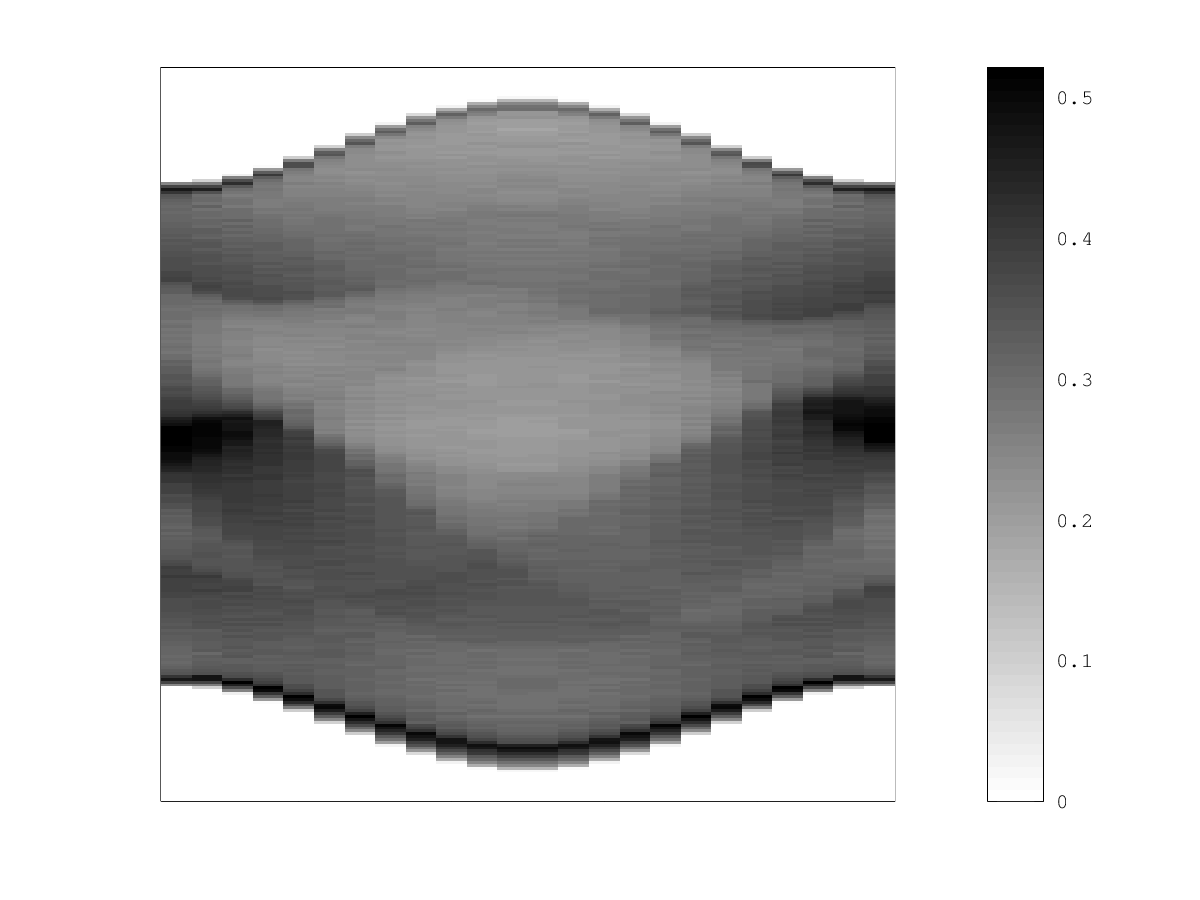}
}
\subfloat[$\kappa = 10^2$.]{
\includegraphics[scale=0.35]{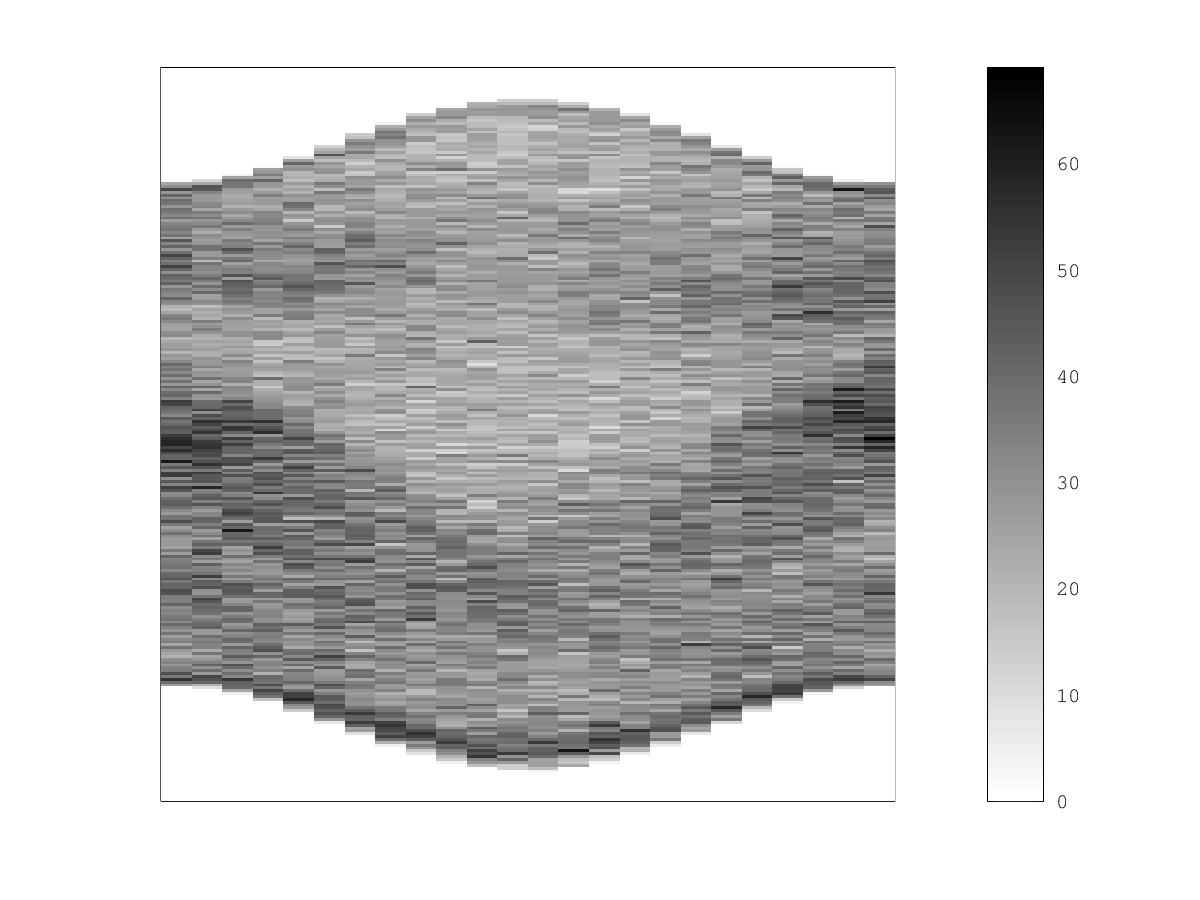}
}
\\ 
\subfloat[$\kappa = 4 \times 10^2$.]{
\includegraphics[scale=0.35]{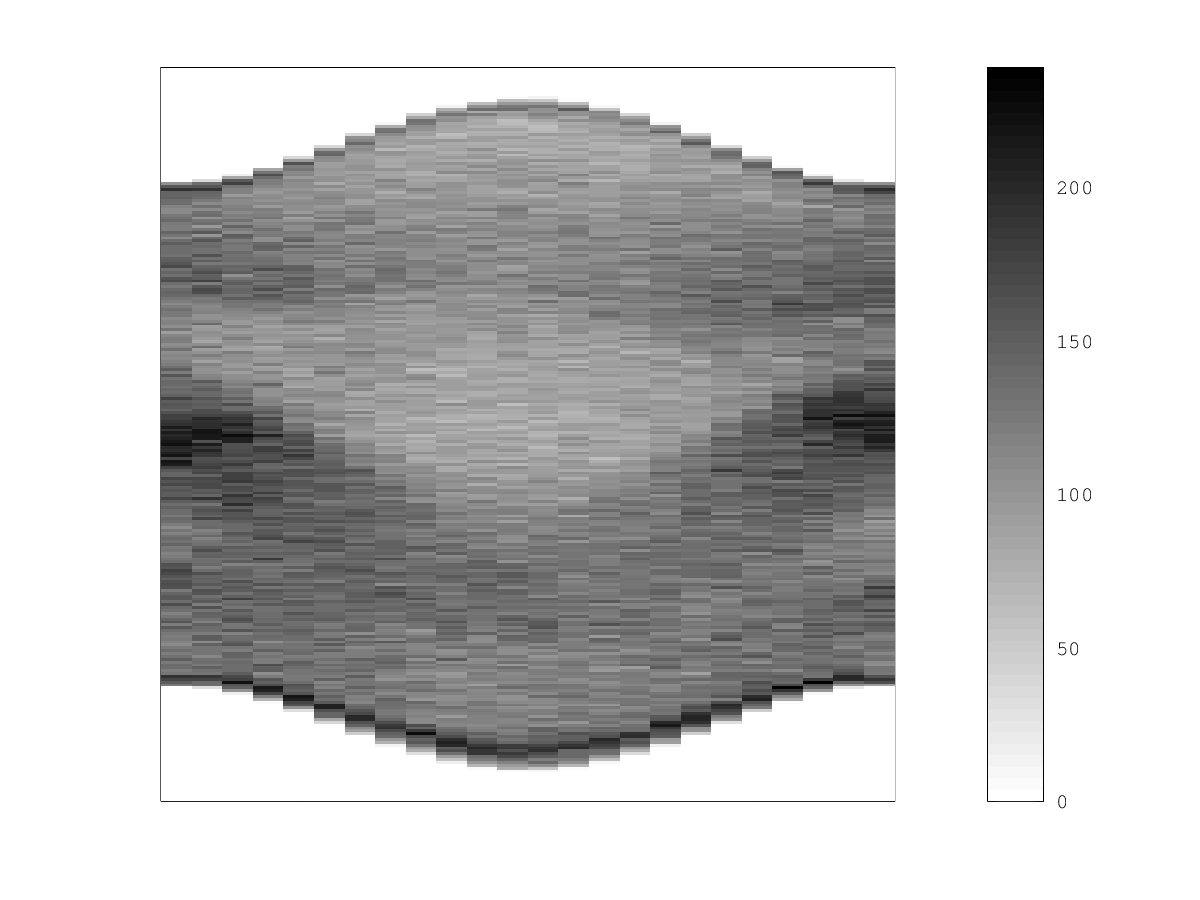}
}
\subfloat[$\kappa = 10^3$.]{
\includegraphics[scale=0.35]{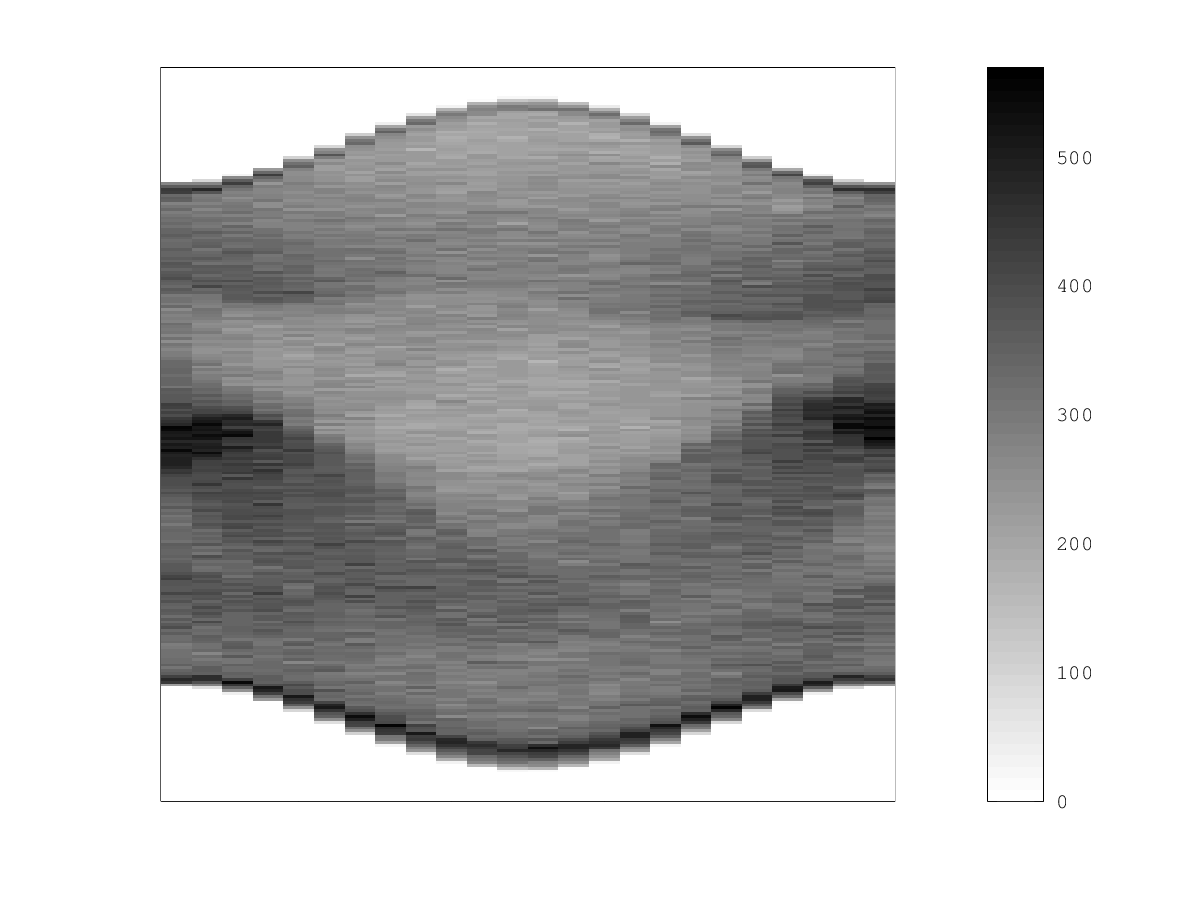}
}
\caption{Sinograms obtained from Radon transform of Shepp-Logan phantom. Only $24$ equally spaced 
angular projections are taken, each sampled at $256$ equally spaced points.} \label{data}
\end{figure}

\item c) \textit{Initial image}: for the initial image $\vetx^0$, we seek an uniform vector that somehow has information 
from data obtained by the Radon Transform of Shepp-Logan head phantom. For that, we use an initial image that satisfy
$ \sum_{i=1}^{m} \langle \vetr_i, \vetx^0 \rangle = \sum_{i=1}^{m} b_i$.
Therefore, supposing $x_{j}^{0} = \zeta$ for all $j = 1, \dots, n$, we can compute $\zeta$ by
\bb \label{initial_image} \ds \zeta = \frac{\sum_{i=1}^{m} b_i}{\sum_{i=1}^{m} \langle \vetr_i, \mathbf{1} \rangle},\ee
where $\mathbf{1}$ is the $n$-vector whose components are all equal to $1$.

\item d) \textit{Applying Algorithm \emph{\ref{alg-proposed}}}: 
step-size sequence $\left\{ \lambda_k \right\}$ was determined by the formula
\bb \label{stepsize} \ds \lambda_k = (1 - \rho c_k) \frac{\lambda_0}{\alpha k^s / P + 1},\ee
where the sequence $c_k$ starts with $c_0 = 0$ and the following terms are given by
\begin{equation*} \ds c_k = \frac{\left\langle \vetx^{k-1/2} - \vetx^{k-1}, \vetx^k - \vetx^{k-1/2} \right\rangle}{\left\|\vetx^{k-1/2} - \vetx^{k-1} \right\| \left\| \vetx^k - \vetx^{k-1/2} \right\|}.\end{equation*}
Each $c_k$ is the cosine of the angle between the directions taken by optimality and feasibility operators in the previous 
iteration. Thus, the factor $(1 - \rho c_k)$ serves as an empirical way to prevent oscillations. 
Finally, we use $\lambda_0 = \mu \left\|R \vetx^0 - \imb \right\|_1 / \left\| \vetg^0 \right\|^2$, where $\mu$ is the number of 
parcels in 
which the sum is divided and $\vetg^0$ is a subgradient of objective function in $\vetx^0$. Other free parameters in 
(\ref{stepsize}) were tuned and set to: $\rho = 0.999$, $s = 0.51$ and $\alpha = 1.0$.

\hspace{0.7cm} Now we need to calculate the subgradients for the objective function and $TV$. Since the vector 
\begin{equation*} \ds \mathbf{sign}(\vetx) = [u_i]^T, \quad \mbox{such that} \quad u_i := \left\{ \begin{array}{ccc} 1, & \mbox{if} & x_i > 0, \\ 0, & \mbox{if} & x_i = 0, \\ -1, &  & \mbox{otherwise} \end{array} \right. \end{equation*}
belongs to the set $\partial \left\| \vetx \right\|_1$, then the theorem 4.2.1, p. 263 in \cite{hil93} guarantees that
\begin{equation*} R^T \mathbf{sign}(R \vetx - \imb) \in \partial \left\| R \vetx - \imb \right\|_1,\end{equation*}
and this subgradient will be used in our experiments. In particular, for each $k \geq 0$, $\ell = 1, \dots, P$ and $s = 1, \dots, 
m(\ell)$ we use 
\begin{equation} \label{subgrad_objfunc}
\vetg_{i_{s}^{\ell}}^{k} =  \left\{ \begin{array}{ccc} \vetr_{i_{s}^{\ell}}, & \mbox{if} & \langle \vetr_{i_{s}^{\ell}}, \vetx_{i_{s-1}^{\ell}}^{k} \rangle - b_{i_{s}^{\ell}} > 0, \\
						       - \vetr_{i_{s}^{\ell}}, & \mbox{if} & \langle \vetr_{i_{s}^{\ell}}, \vetx_{i_{s-1}^{\ell}}^{k} \rangle - b_{i_{s}^{\ell}} < 0, \\
						       0, & & \mbox{otherwise.} \end{array} \right.
\end{equation}
A subgradient $\veth = [t_{i,j}]$ for $h(\vetx) = TV(\vetx) - \tau$ can be computed by
\begin{equation} \label{subgrad_tv} \begin{array}{rcl} \ds t_{i,j} & = & \ds \frac{2x_{i,j} - x_{i,j-1} - x_{i-1,j}}{\sqrt{(x_{i,j} - x_{i,j-1})^2 + (x_{i,j} - x_{i-,j})^2}} \\ & & 
  \ds \qquad + \frac{x_{i,j} - x_{i,j+1}}{\sqrt{(x_{i,j+1} - x_{i,j})^2 + (x_{i,j+1} - x_{i-1,j+1})^2}} \\ & & 
  \ds \qquad \qquad + \frac{x_{i,j} - x_{i+1,j}}{\sqrt{(x_{i+1,j} - x_{i,j})^2 + (x_{i+1,j} - x_{i+1,j-1})^2}},  \end{array} \end{equation}
where, if any denominator is zero, we annul the correspondent parcel.
\end{description}

Once we have determined the strings by setting $\Delta_1 = S_1, \dots, \Delta_P = S_P$ and weights ($w_\ell = 1/P$ for all $\ell$), 
the step-size sequence $\lambda_k$ in (\ref{stepsize}), initial image $\vetx^0$ in (\ref{initial_image}) and subgradients 
$\vetg_{i_{s}^{\ell}}^{k} \in \partial f_{i_{s}^{\ell}}(\vetx_{i_{s-1}^{\ell}}^{k})$ in (\ref{subgrad_objfunc}), 
optimality operator $\OO_f$ (\ref{initial_opt_op})-(\ref{SA_isoo}) can be applied. 
By considering the subdifferential of $\| \vetx \|_1$, it is clear that $\partial f(\vetx)$ is uniformly bounded, ensuring that 
assumption (\ref{limitacao}) holds. Moreover, since $\rho \in [0,1)$, $\alpha > 0$, $s \in (0, 1]$ and $c_k \in [-1,1]$, by 
Cauchy-Schwarz inequality we can ensure that $\lambda_k > 0$ and that Condition \ref{cond3} of Theorem \ref{conv1} holds. 

Using $\veth \in \partial h(\vetx)$ defined in 
(\ref{subgrad_tv}), operator $\SSS_{h}^{\nu}$ can be computed by equation (\ref{subgrad_projection}), such that, 
in our tests, we use $\nu = 1$. 
The feasibility operator is thus given by $\ds \V_{\X} = \mathcal{P}_{\mathbb{R}_{+}^{n}} \circ \SSS_{h}^{\nu}$ 
(see equation (\ref{viabilidade})). The projection step can be regarded as a special case of the operator $\SSS_{g}^{\nu}$ with 
$\nu=1$ and $g = d_{\mathbb{R}_{+}^{n}}$. It is easy to see that $\V_{\X}$ defined in this way satisfies the conditions 
established in the Proposition \ref{prop viab}. 

In conclusion, once $\left\| R \vetx - \imb \right\|_1 \geq 0$, Corollary 2.7 in \cite{helou09} implies that 
$\left\{d_{\X}(\vetx^k)\right\} \rightarrow 0$ (the sequence is bounded). Since $\partial f(\vetx)$ is uniformly bounded, we 
have that $\left[ f(\PX(\vetx^k)) - f(\vetx^k) \right]_{+} \rightarrow 0$ and Condition \ref{cond4} of Theorem \ref{conv1} holds.
Thus, Corollary \ref{convSA} can be applied ensuring convergence of the Algorithm \ref{alg-proposed}.

\subsection{Image reconstruction analysis}
To run the experiments, we used a computer with processor Intel Core i7-4790 CPU @ 3.60 GHz x8 and 16 GB of memory. 
The operating system used was Linux and the implementation was realized in C$++$.
Figure \ref{graficos} shows the decrease of the objective function with respect to computation time to compare convergence 
speed and image quality in the performed reconstructions. Furthermore, in order to obtain a more meaningful analysis, 
we consider the graphic of the total variation $TV(\vetx)$ and the \textit{relative squared error},
\begin{equation*} \ds RSE(\vetx) = \frac{\left\|\vetx - \vetx^{\ast} \right\|^{2}}{\left\| \vetx^{\ast} \right\|^2}.\end{equation*}
Note that the $RSE$ metric requires information on the desired image. 
Also we show graphs of $TV(\vetx^k)$ as function of $f(\vetx^k)$.
\begin{figure}
\centering
\includegraphics[scale=0.8]{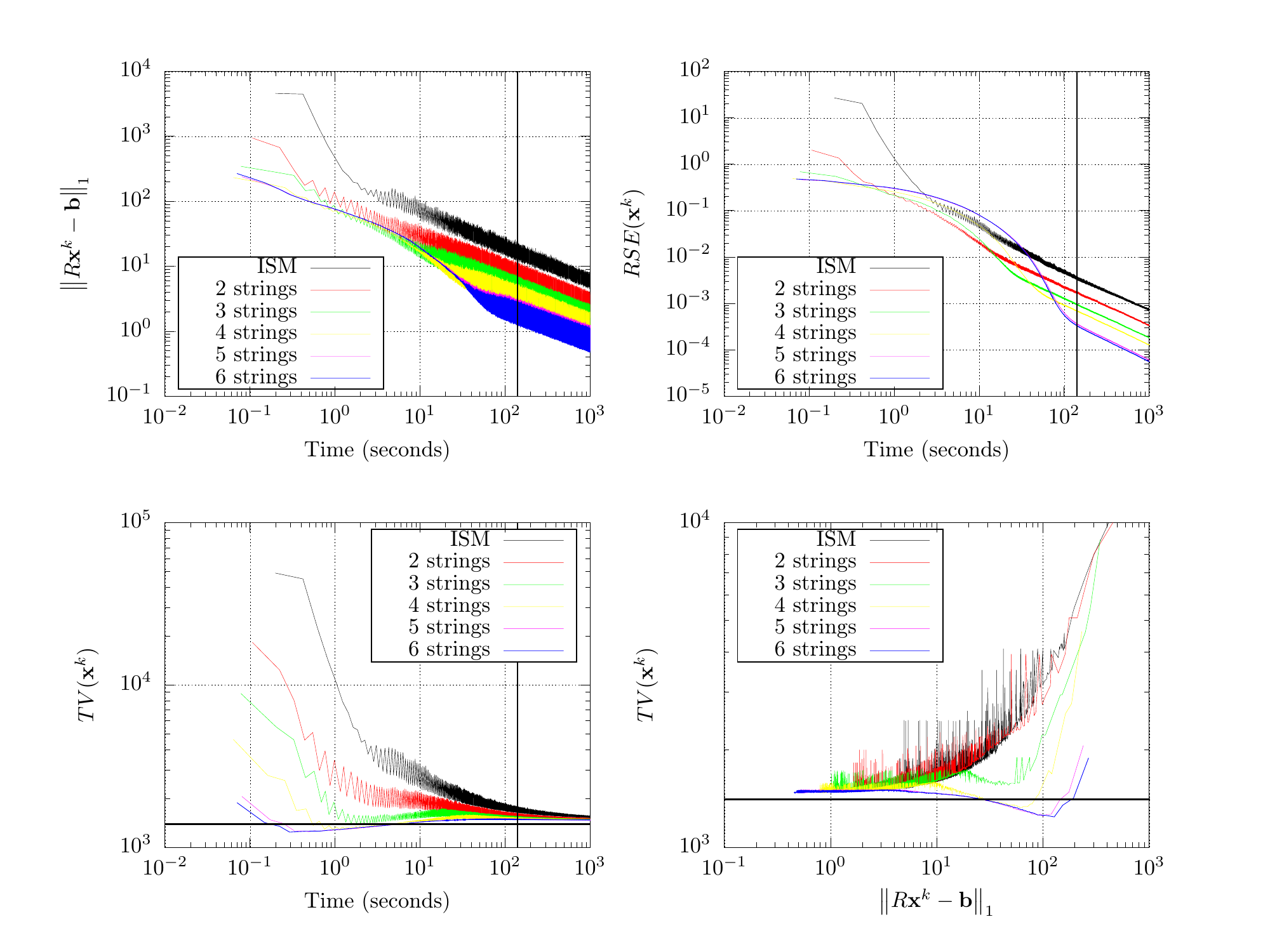}
\caption{Decrease of the objective function, $TV$ and $RSE$ in a noise-free condition. 
Comparing ISM (1 string) with algorithms that use 2-6 strings, executed in parallel, 
lower values are reached for the objective and $RSE$ functions. 
For $TV$, we get an oscillation with lower intensity (especially when we used 4-6 strings). 
Note that solid horizontal lines on $TV$ graph represent the target value $\tau = TV(\vetx^{\ast})$. 
The solid vertical lines show, for a fixed computation time, that both functions values are decreasing with respect to the 
number of strings $P$. 
Figure \ref{imagens} shows the reconstructed images by the algorithms for this fixed computation time. 
In the bottom right figure, note that $TV$ values appear to represent, at many fixed levels of residual $\ell_1$-norm, 
a decreasing function of the number of strings.} \label{graficos}
\end{figure}

When we use $P = 2, \dots, 6$ (algorithm is executed with 2-6 strings), it is possible to observe a faster decrease in the 
values of the objective function $f(\vetx^k)$ as the running time increases, if compared to the case where $P=1$. 
Since there is no guarantee that algorithm produces a descent direction in each iteration, it is important to note that, 
in some of the tests, the intensity of the oscillation, i.e., $f(\vetx^{k+1}) - f(\vetx^k)$ for $f(\vetx^{k+1}) > f(\vetx^{k})$, 
decreases as the number of strings 
increases (note, for example, the algorithm performance with 5 and 6 strings, from $0$ to about $40$ seconds). A similar behavior 
can be noted for values $TV(\vetx^k)$.
For 4-6 strings, the algorithm is able to provide images with a more appropriate $TV$ level, approaching the feasible region more quickly.
Even if closer to satisfying the constraints, for methods with a larger number of strings,
the values of $RSE(\vetx^k)$ and of the objective function decrease with lower intensity oscillations 
and reach lower values within the same computation time. Interestingly, the experiments with noisy data show that 
the algorithm generates a sharp decrease in the intensity of oscillation precisely where image quality seems to reach a good level.
The study of conditions under which we can establish a stopping criterion for the algorithm are left for future research, perhaps taking advantage of this kind of phenomenon.

The quality of the reconstruction is significantly affected by the increase in the number of strings. 
Figure \ref{imagens} shows the reconstructed images obtained in the experiments. 
There is a clear difference in the quality of reconstruction for ISM and algorithms with 2-6 strings. 
For 5 and 6 strings the reconstruction is visually perfect.

\begin{figure}[htbp!]
\centering
\subfloat[ISM (1 string)]{
\includegraphics[scale=0.4]{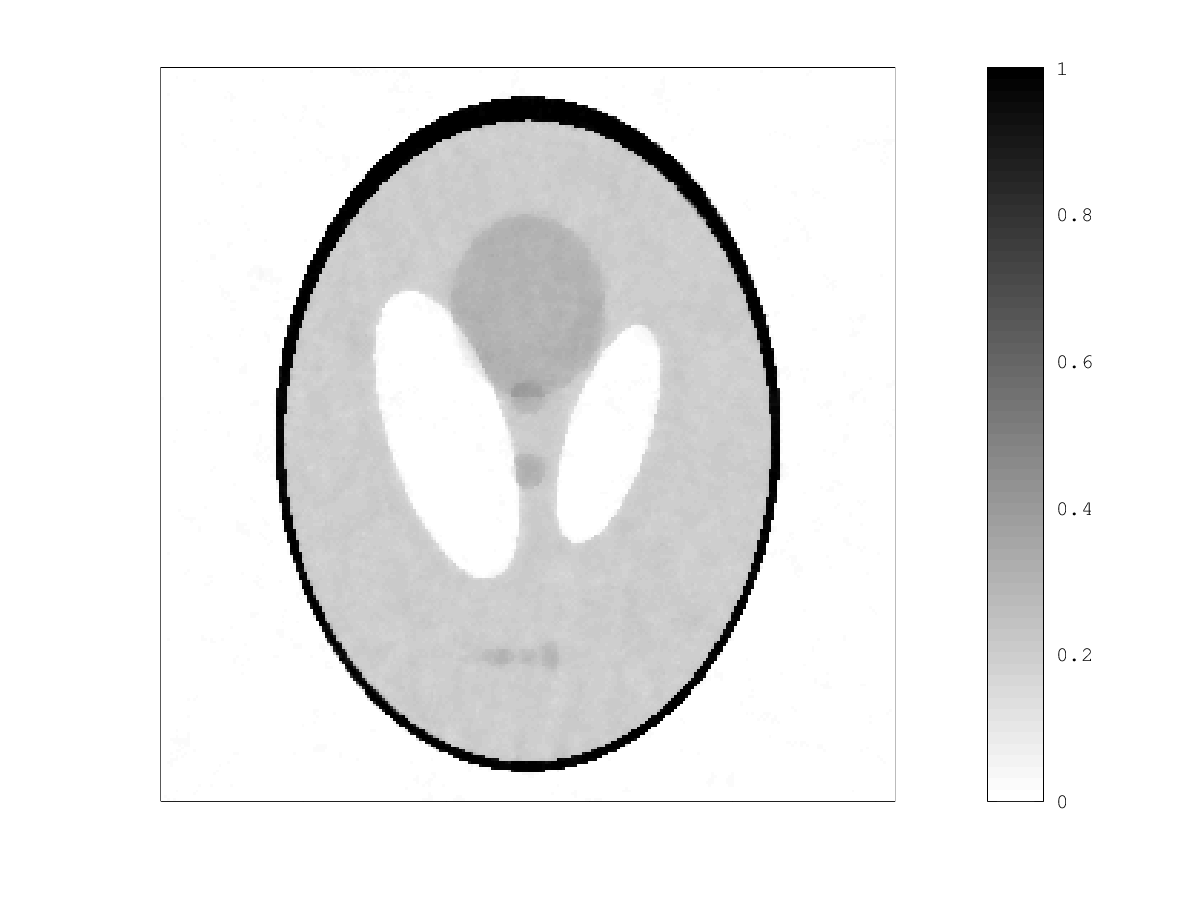}
}
\subfloat[2 strings]{
\includegraphics[scale=0.4]{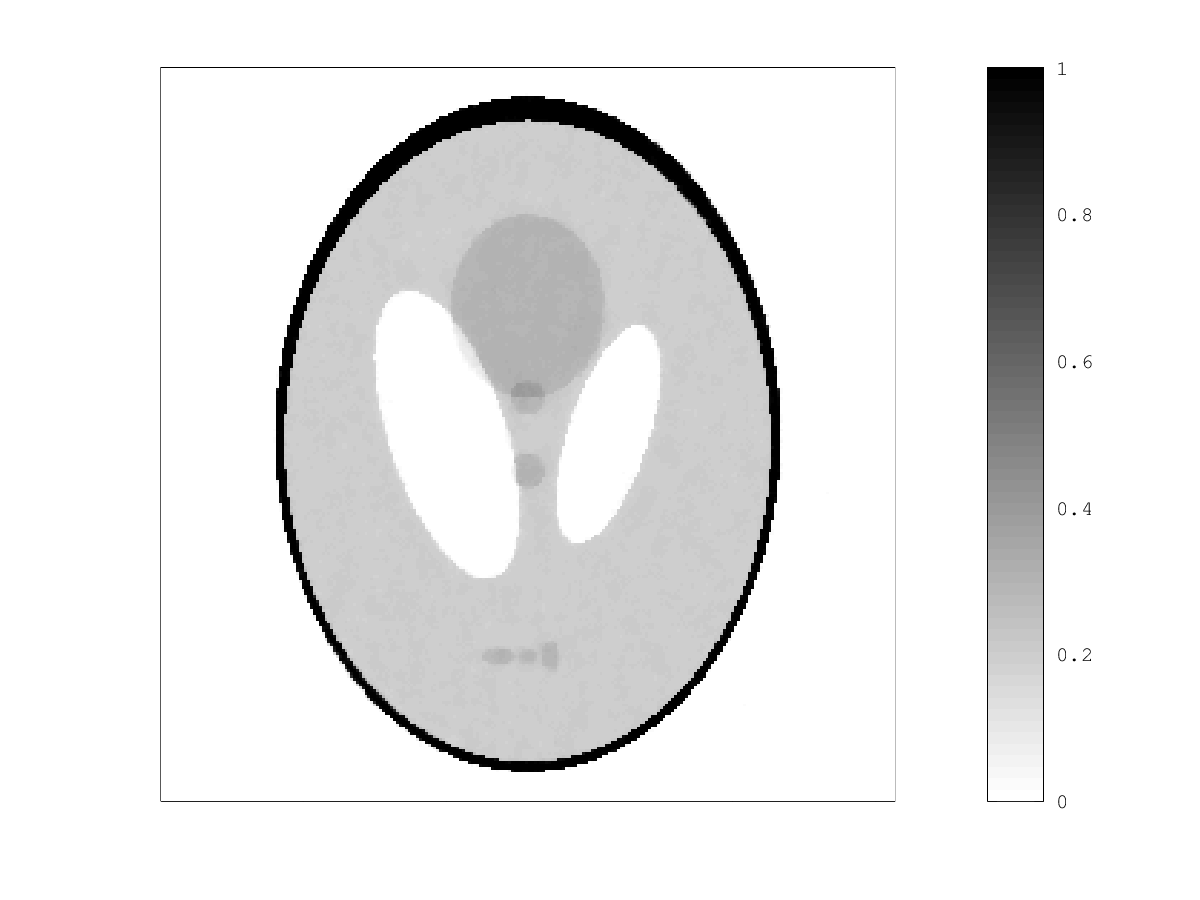}
}
\\ 
\subfloat[3 strings]{
\includegraphics[scale=0.4]{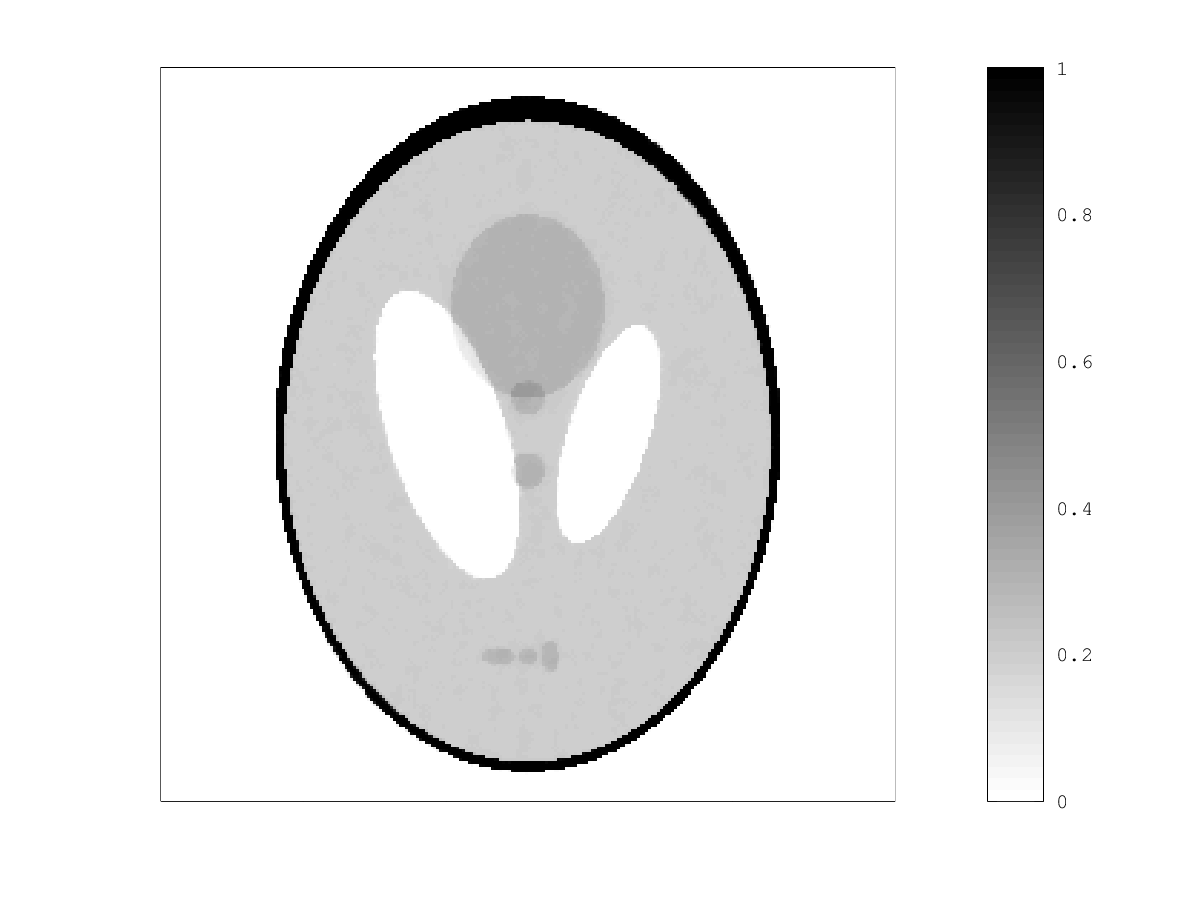}
}
\subfloat[4 strings]{
\includegraphics[scale=0.4]{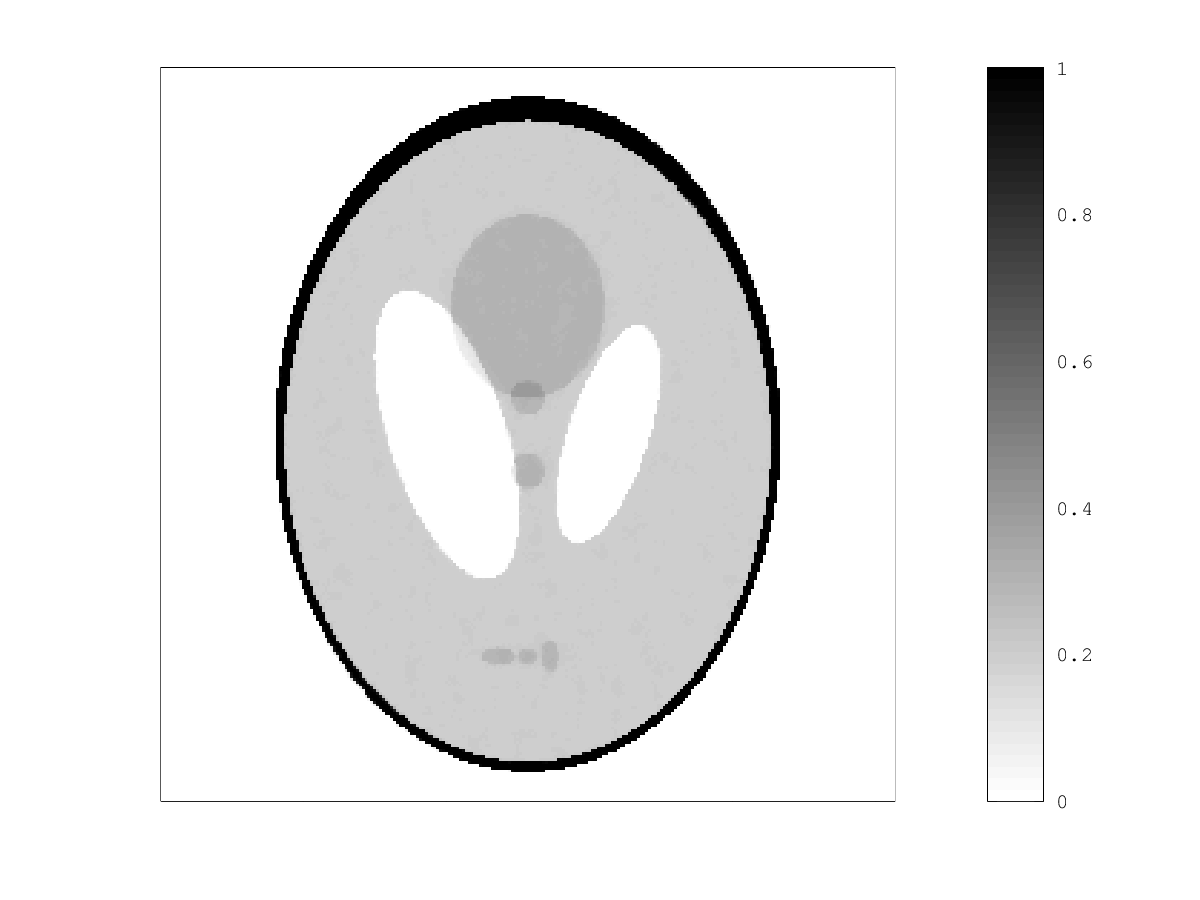}
}
\\
\subfloat[5 strings]{
\includegraphics[scale=0.4]{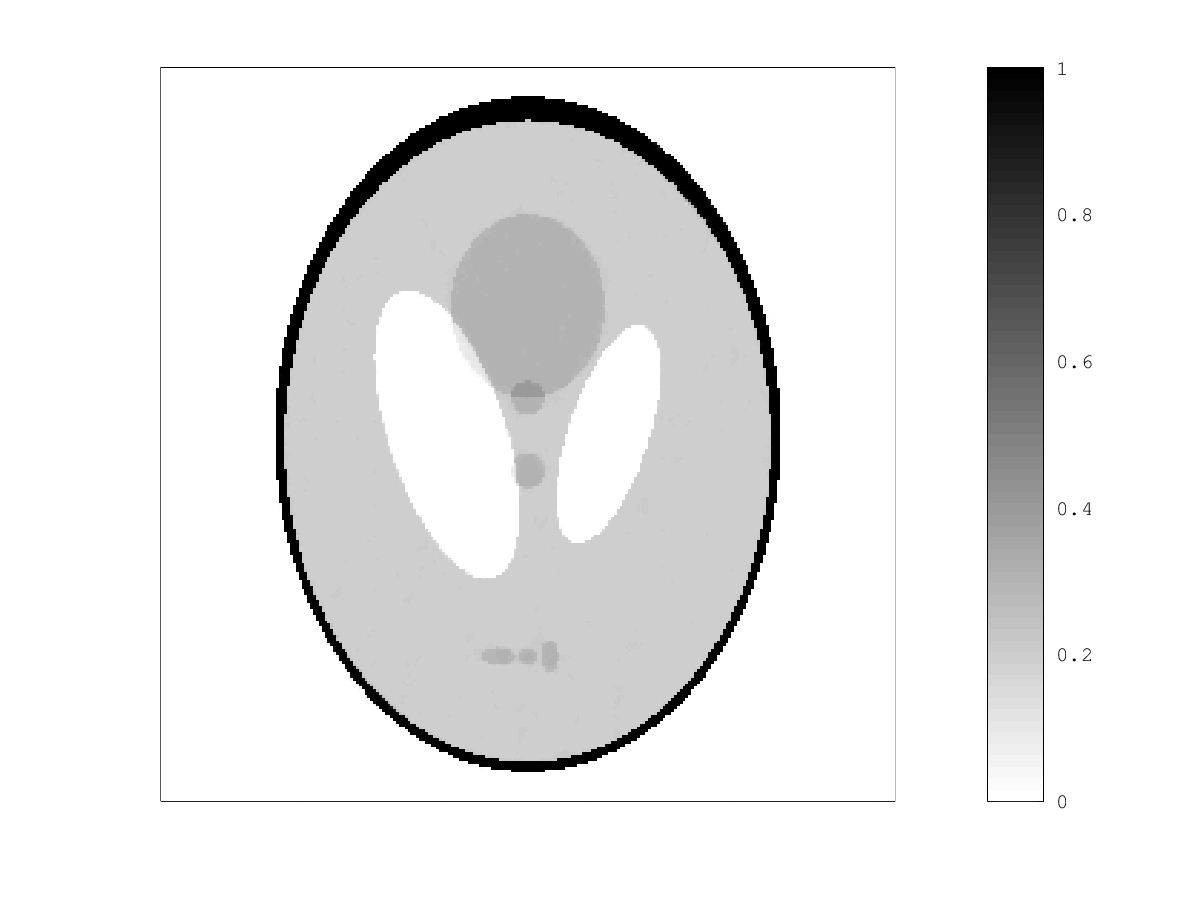}
}
\subfloat[6 strings]{
\includegraphics[scale=0.4]{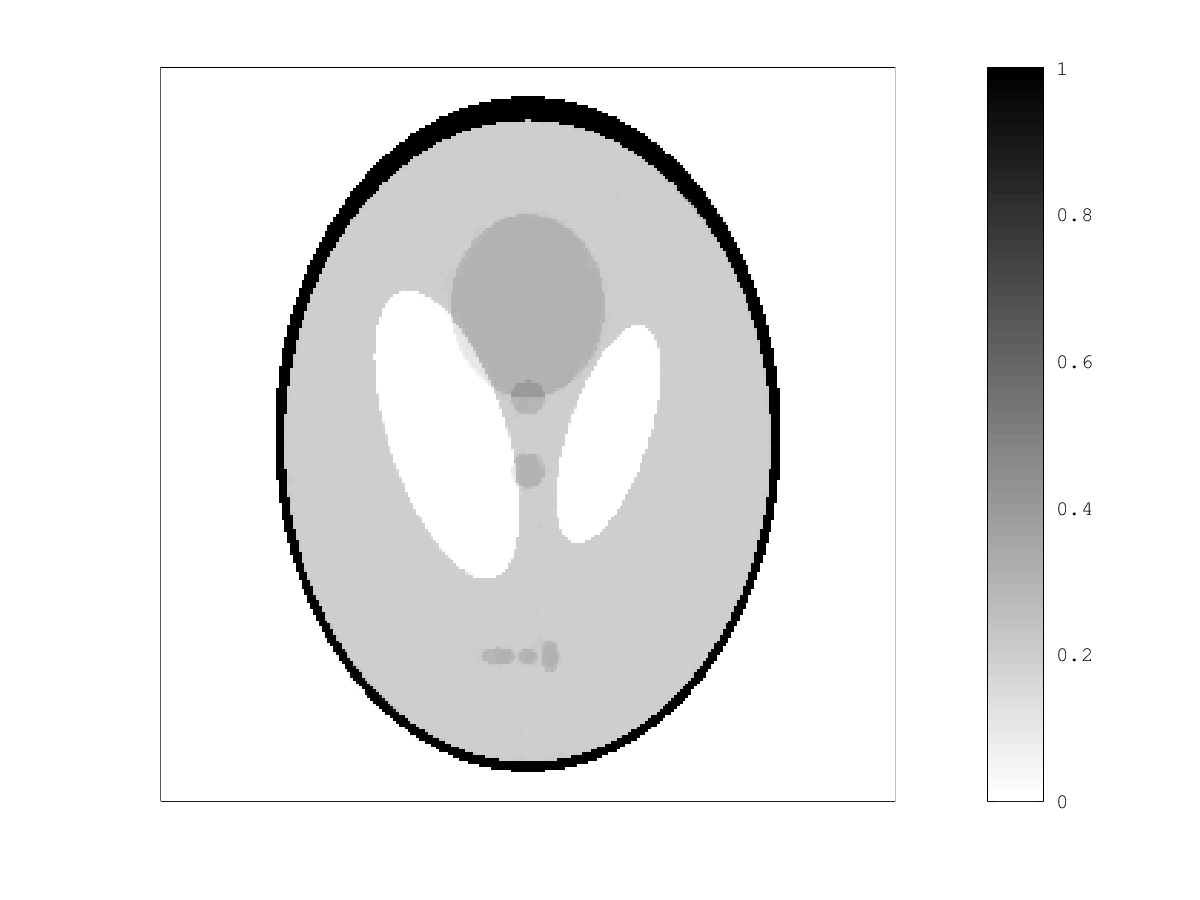}
}
\caption{Reconstructed images obtained in the computation time mentioned in the Figure \ref{graficos}.} \label{imagens}
\end{figure}

Figures \ref{ruido1}, \ref{ruido3} and \ref{ruido2} show plots similar to those in Figure~\ref{graficos} but now under different relative noise levels, which was computed as $\left\| \imb - \imb^{\dagger} \right\| / \left\| \imb^{\dagger} \right\|,$ 
where $\imb^{\dagger}$ is the vector that contains the ideal data.
\begin{figure}[!ht]
\centering
\includegraphics[scale=0.8]{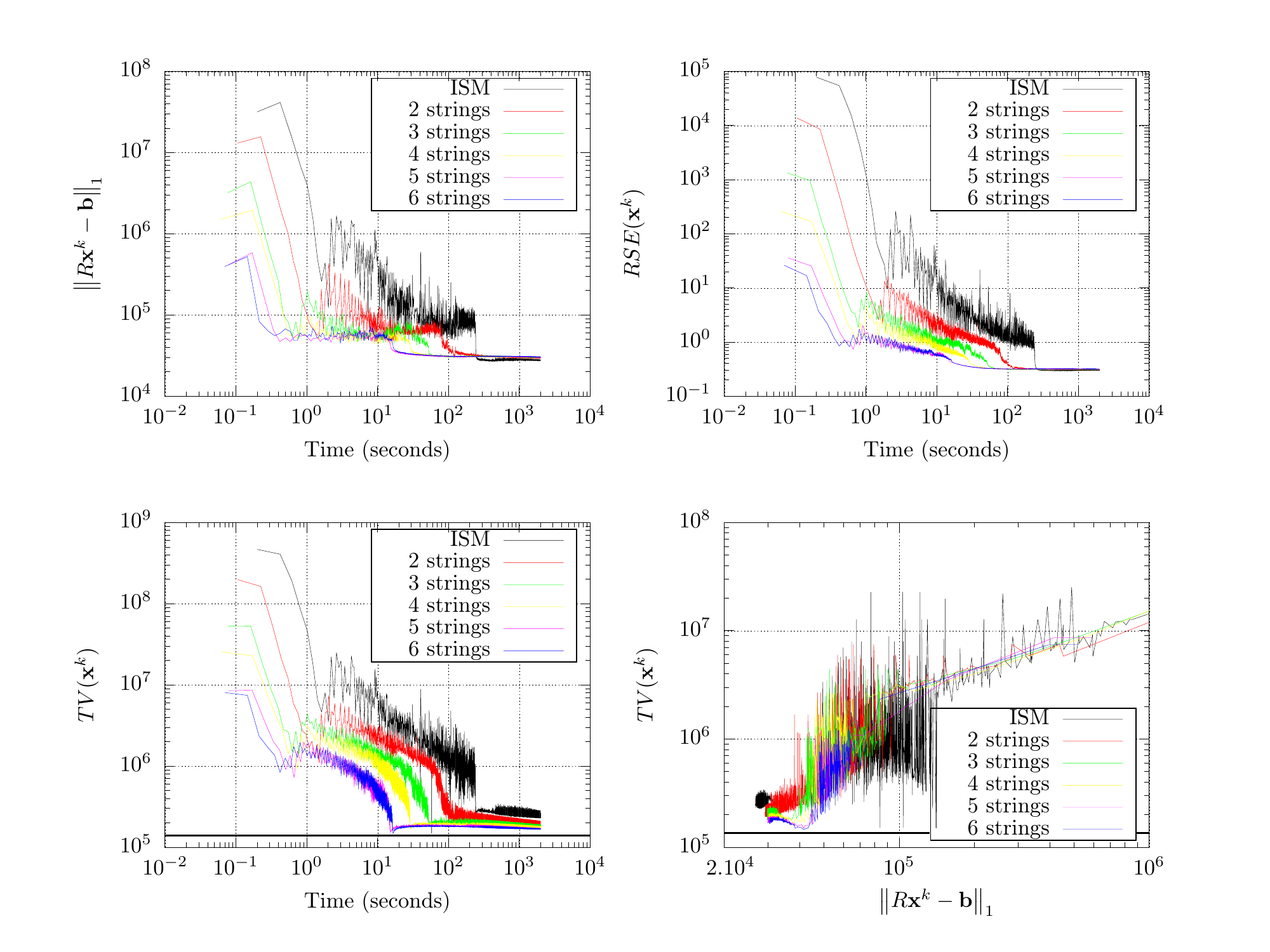}
\caption{Test with $17.8\%$ of relative noise.}\label{ruido1}
\end{figure}
\begin{figure}[!ht]
\centering
\includegraphics[scale=0.8]{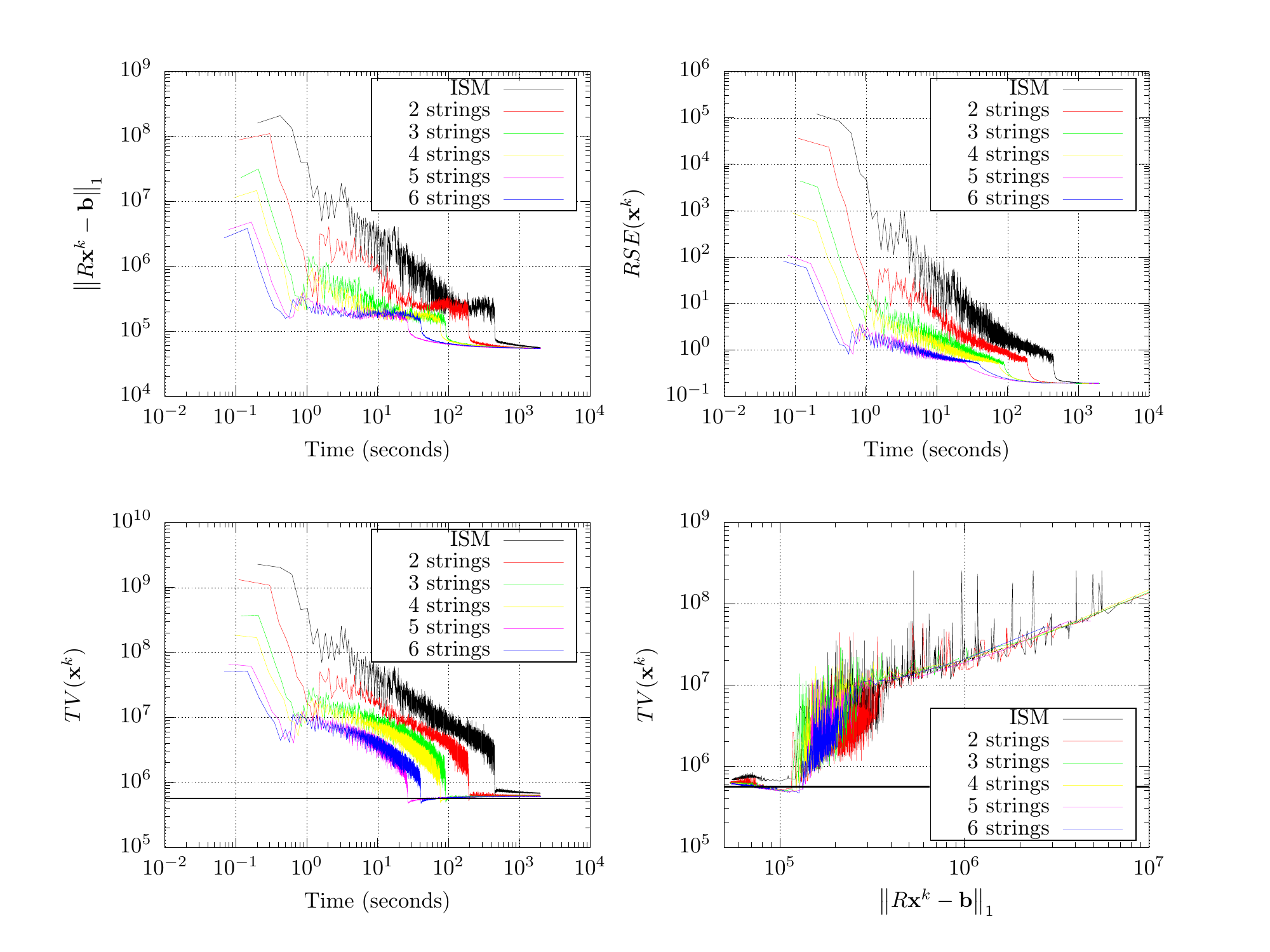}
\caption{Test with $8.78\%$ of relative noise.}\label{ruido3}
\end{figure}
\begin{figure}[!ht]
\centering
\includegraphics[scale=0.8]{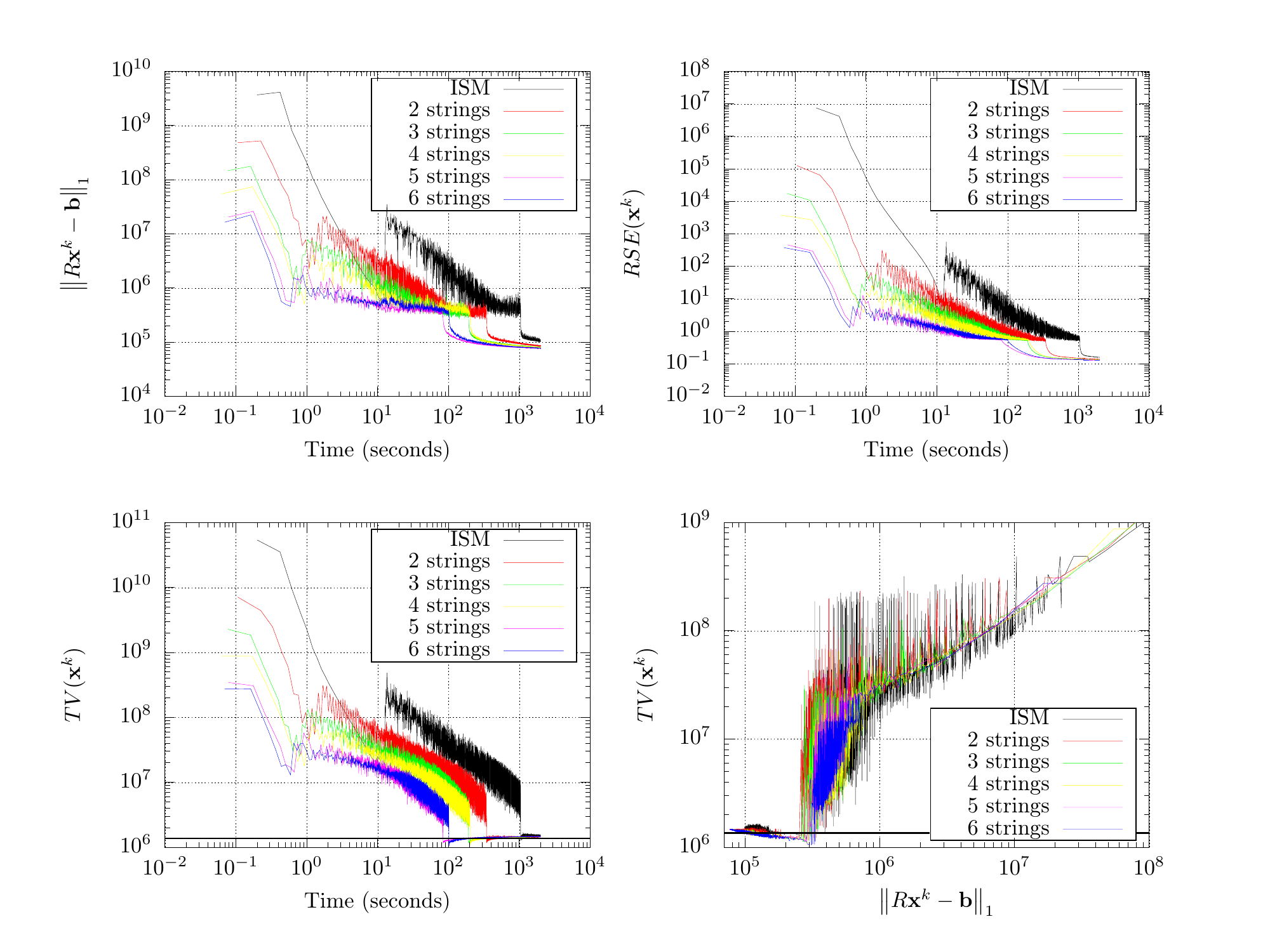}
\caption{Test with $5.65\%$ of relative noise.}\label{ruido2}
\end{figure}
We can note that the behavior of algorithm is similar to the previous case. 
Algorithms with larger number of strings reach results that the ISM takes longer to reach. 
Furthermore, oscillations with lower intensity can be noted, especially for 5 and 6 strings. 
Figure \ref{imagens_ruido} shows the reconstructed images obtained by ISM and method with $6$ strings according to the following rule: we set 
an objective function value and seek the first iteration to fall below this threshold for each method.
Table \ref{tab1} provides the running time and total variation for each case.
These data confirm a good performance of the algorithm with string averaging, in the sense that, for fixed values of objective function, 
algorithm running with $6$ strings provides images in which quality appears to be improved (or at least is similar) with lower running time, if compared against ISM.

\begin{table}[htbp!]
\centering
\setlength{\arrayrulewidth}{2\arrayrulewidth}
\setlength{\belowcaptionskip}{10pt}
\begin{tabular}{|c|c|c|}
\hline
 Method / Noise / $f(\vetx)$  & Time ($s$) & $TV(\vetx)$ \\
\hline \hline
ISM / $17.8\%$ / $3.194\times10^4$ &  $2.45\times10^2$ & $2.7\times10^5$ \\
\hline
$P=6$ / $17.8\%$ / $3.191\times10^4$ & $6.0\times10^1$ & $1.82\times10^5$ \\
\hline
ISM / $8.78\%$ / $6.070\times10^4$ & $7.11\times10^2$ & $6.91\times10^5$ \\
\hline
$P=6$ /$8.78\%$ / $6.093\times10^4$ & $1.5\times10^2$ & $5.87\times10^5$ \\
\hline
ISM / $5.65\%$ / $9.889\times10^4$ & $1.87\times10^3$ & $1.48\times10^6$ \\
\hline
$P=6$ / $5.65\%$ / $9.889\times10^4$ & $2.2\times10^2$ & $1.36\times10^6$ \\
\hline
\end{tabular}
\caption{Running time and total variation obtained by ISM and method with $6$ strings under conditions of Poisson relative noise used in the 
tests for some fixed values of objective function.} \label{tab1}
\end{table}

\begin{figure}[htbp!]
\centering
\subfloat[ISM / $17.8\%$ / $3.194\times10^4$.]{
\includegraphics[scale=0.4]{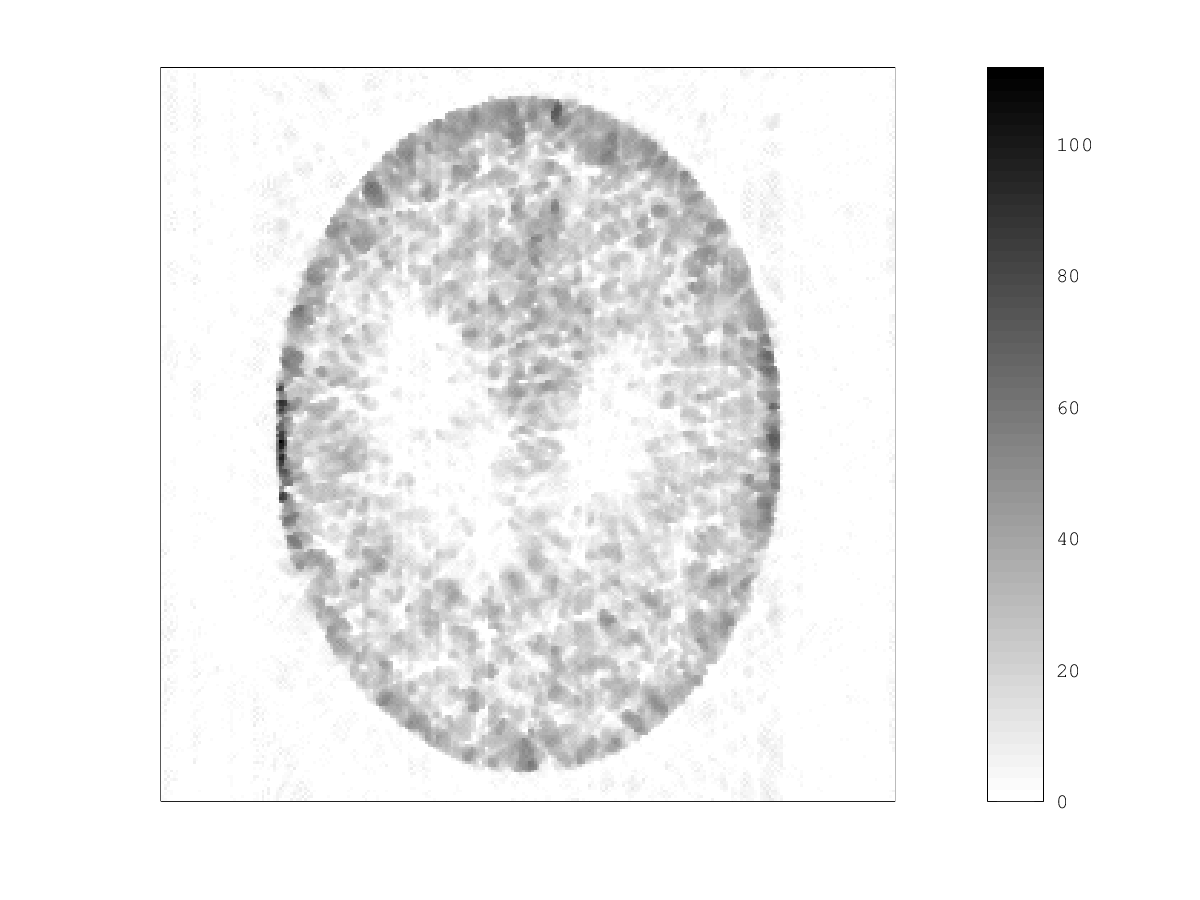}
}
\subfloat[$P=6$ / $17.8\%$ / $3.191\times10^4$.]{
\includegraphics[scale=0.4]{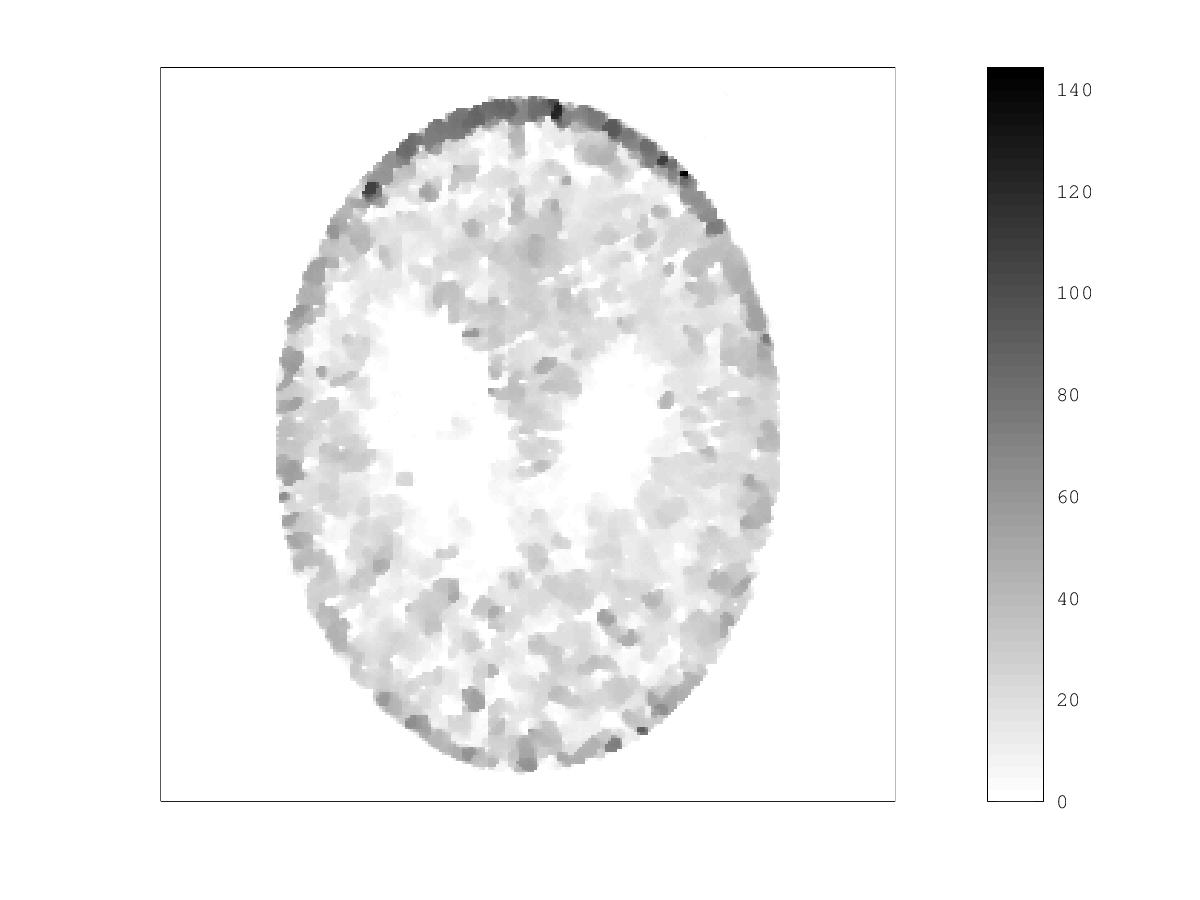}
}
\\ 
\subfloat[ISM / $8.78\%$ / $6.070\times10^4$]{
\includegraphics[scale=0.4]{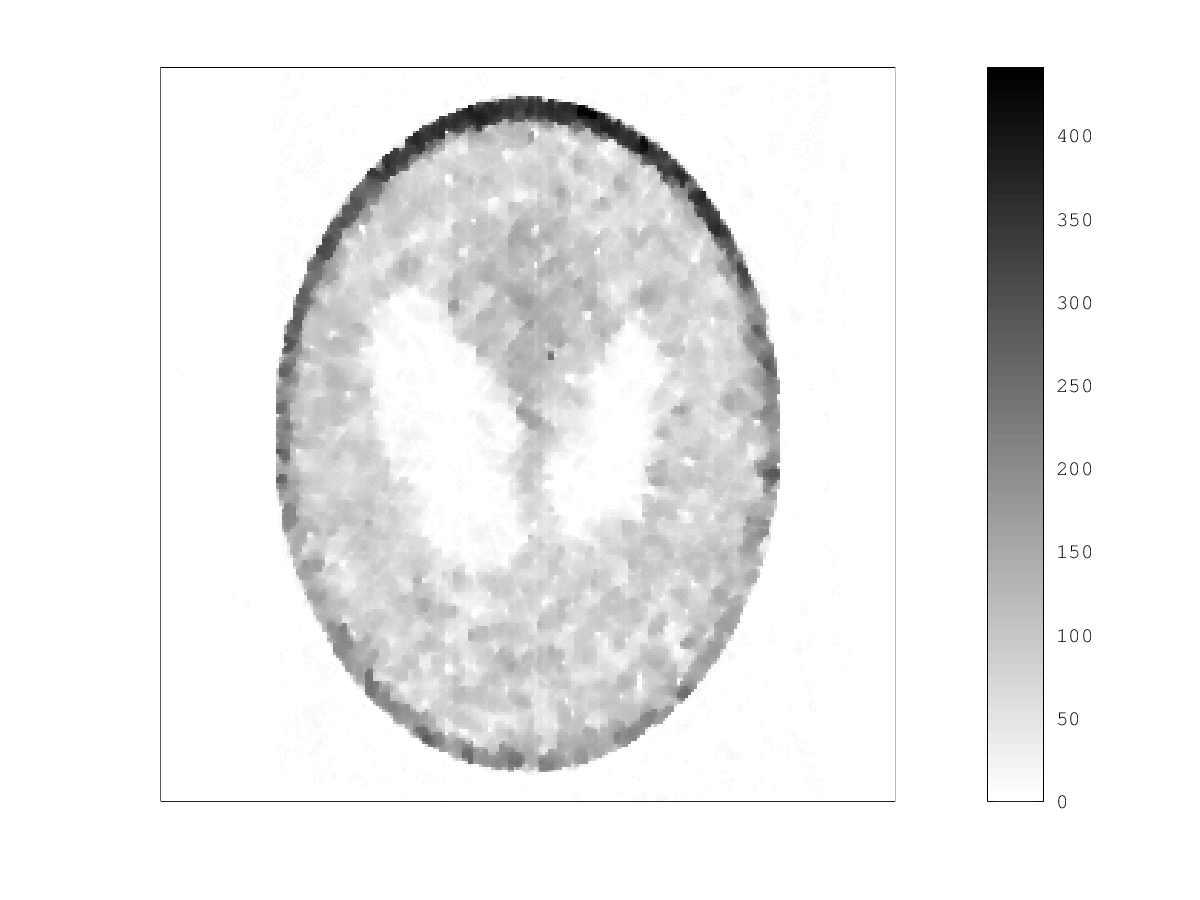}
}
\subfloat[$P=6$ / $8.78\%$ / $6.093\times10^4$.]{
\includegraphics[scale=0.4]{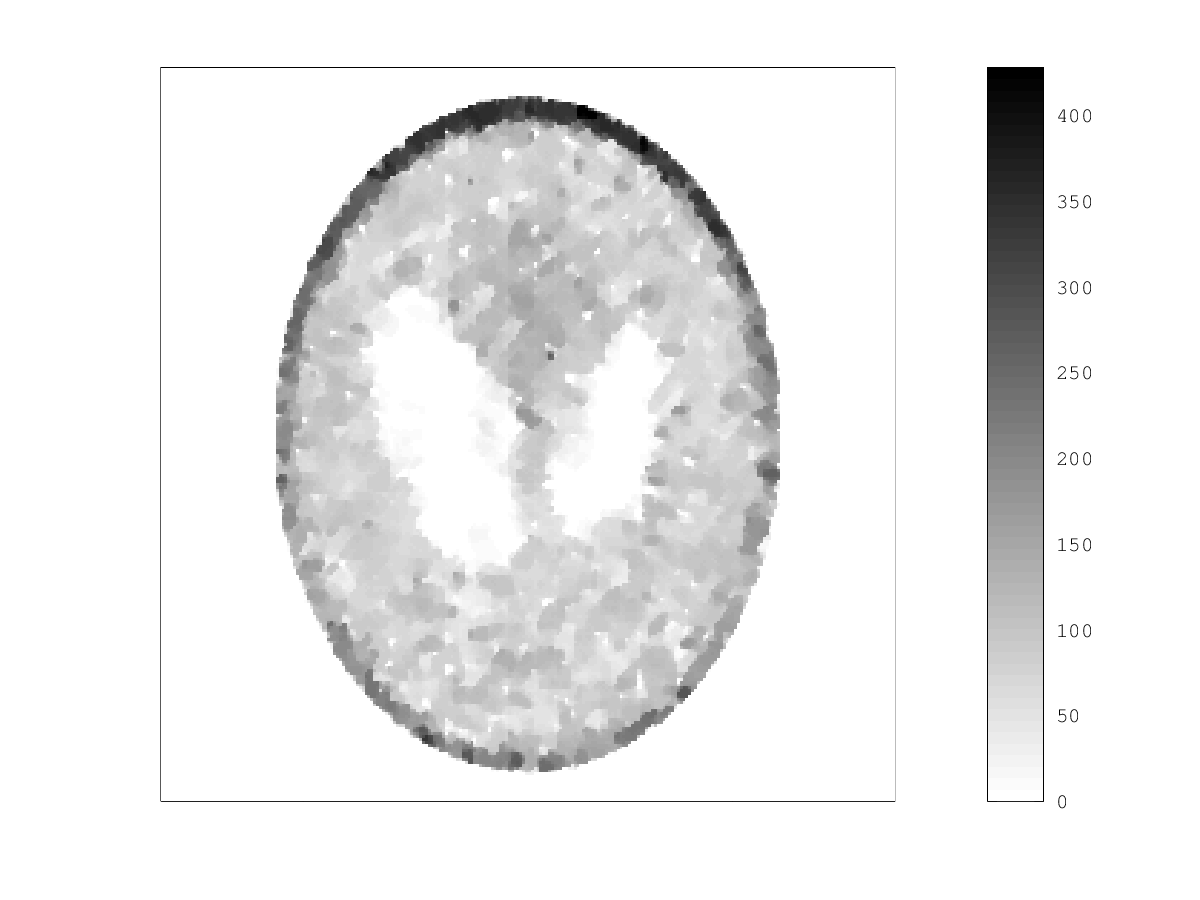}
}
\\
\subfloat[ISM / $5.65\%$ / $9.889\times10^4$.]{
\includegraphics[scale=0.4]{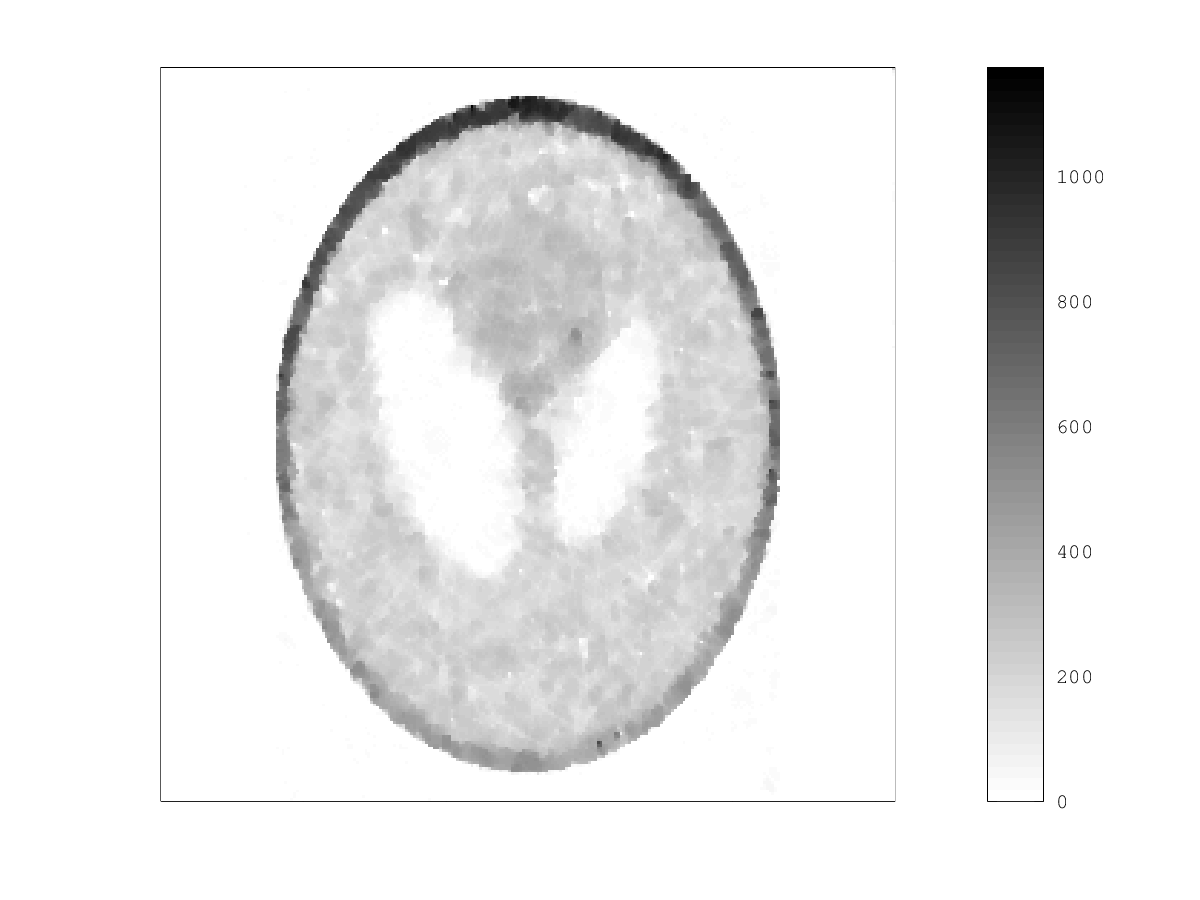}
}
\subfloat[$P=6$ / $5.65\%$ / $9.889\times10^4$.]{
\includegraphics[scale=0.4]{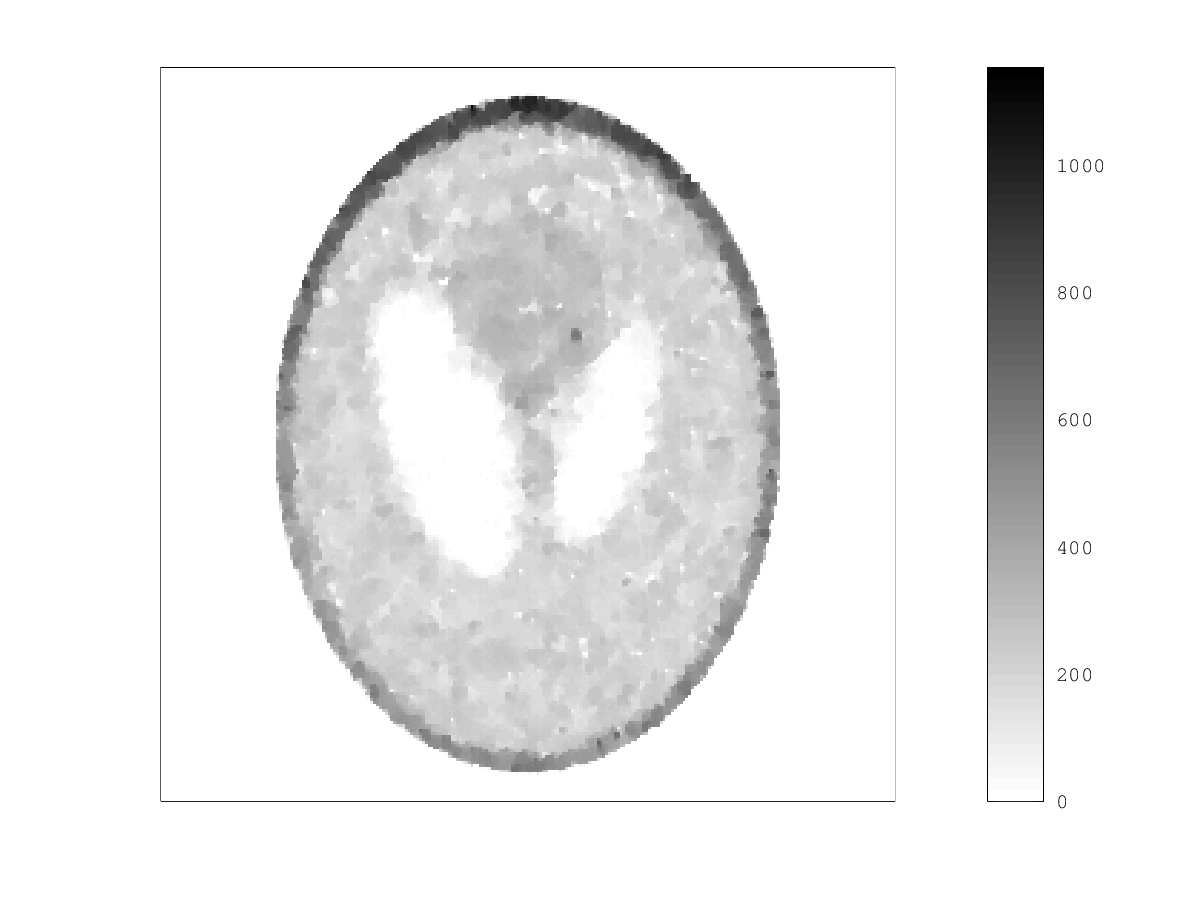}
}
\caption{Reconstructed images obtained in the tests with Poisson noise. Items (a)-(f) exhibit: method / relative noise / $f(\vetx)$. 
Table \ref{tab1} shows running time and total variation obtained in each case.} \label{imagens_ruido}
\end{figure}

\subsection{Tests with real data}

Tomographic data was obtained by synchrotron radiation illumination of eggs of fishes of the species \emph{Prochilodus lineatus} 
collected at the Madeira river's bed at Brazilian National Synchrotron Light Laboratory's (LNLS) facility. 
The eggs had been previously embedded in formaldehyde in order to prevent decay, but were later fixed in water within a 
borosilicate capillary tube for the scan. The sample was placed between the x-ray source and a photomultiplier coupled to a CCD 
capable of recording the images. 
After each radiographic measurement, the sample was rotated by a fixed amount and a new measurement was made.

A monochromator was added to the experiment to filter out low energy photons and avoid overheating of the soft eggs and the 
embedding water. This leads to a low photon flux, which increased the required exposure time to $20$ seconds for each projection 
measurement. Each of this radiographic image was $2048 \times 2048$ pixels covering an area of $0.76 \times 0.76$mm${}^2$. 
Given the high measurement duration of each projection, for the experiment to have a reasonable time span, we have collected 
only $200$ views in the $180^{\circ}$ range, leading to slice tomographic datasets (sinograms) each of dimension 
$2048 \times 200$ (see Figure \ref{ovas_sinograma}).
\begin{figure}[!h]
\centering
\includegraphics[scale=0.7]{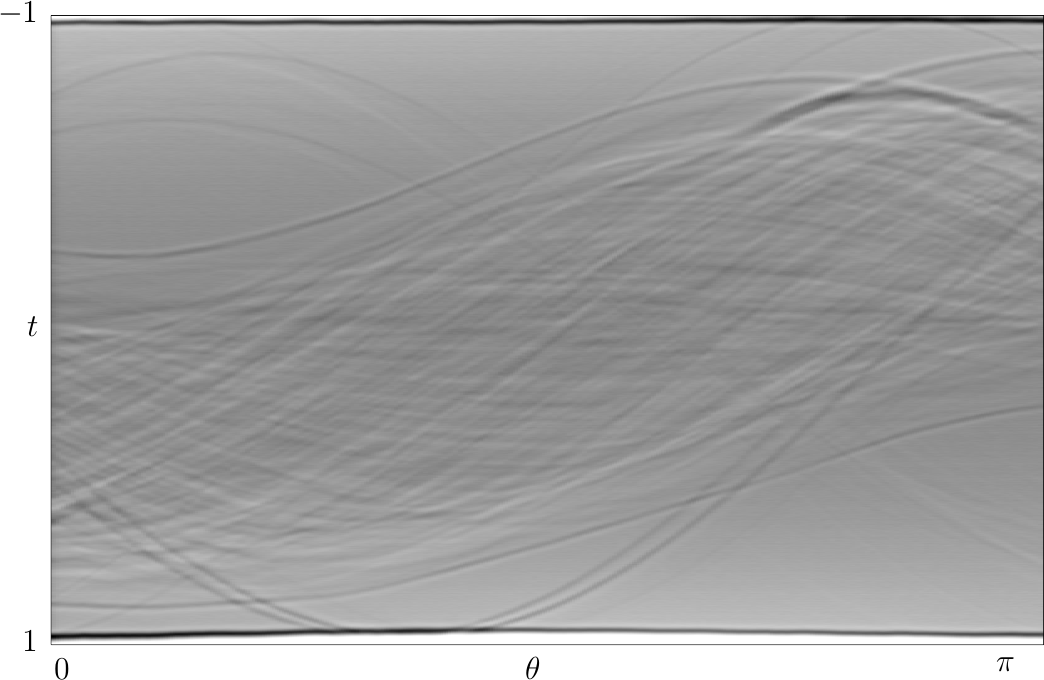}
\caption{Sinogram obtained by synchrotron radiation illumination of eggs of fishes.} \label{ovas_sinograma}
\end{figure}

In this experiment, we use $\tau = 5 \times 10^4$, $\nu = 1.5$ and the other parameters $\rho$, $\alpha$ and $s$ were the same as in the 
previous experiment. 
Furthermore, to avoid high step-size values, we multiply the initial step-size $\lambda_0$ by $0.25$.
Figure \ref{ovas1} shows the plot of $TV$ as function of residual $\ell_1$-norm. 
Better quality reconstructions are generated by the algorithm that uses 6 strings. 
Figure \ref{ovas_imagens} shows the images obtained by reconstruction. By considering that the eggs were immersed in water, which has homogeneous attenuation value, we can 
conclude that image 
in Figure \ref{ovas_imagens}-(b) has less artifacts. Figure \ref{profile} shows profile lines of the reconstructed images in Figure \ref{ovas_imagens}. Note that algorithm running with 
$6$ strings presents a reconstruction with less overshoot and more smoothness than ISM.

\begin{figure}[ht]
\centering
\includegraphics[width=0.825\textwidth]{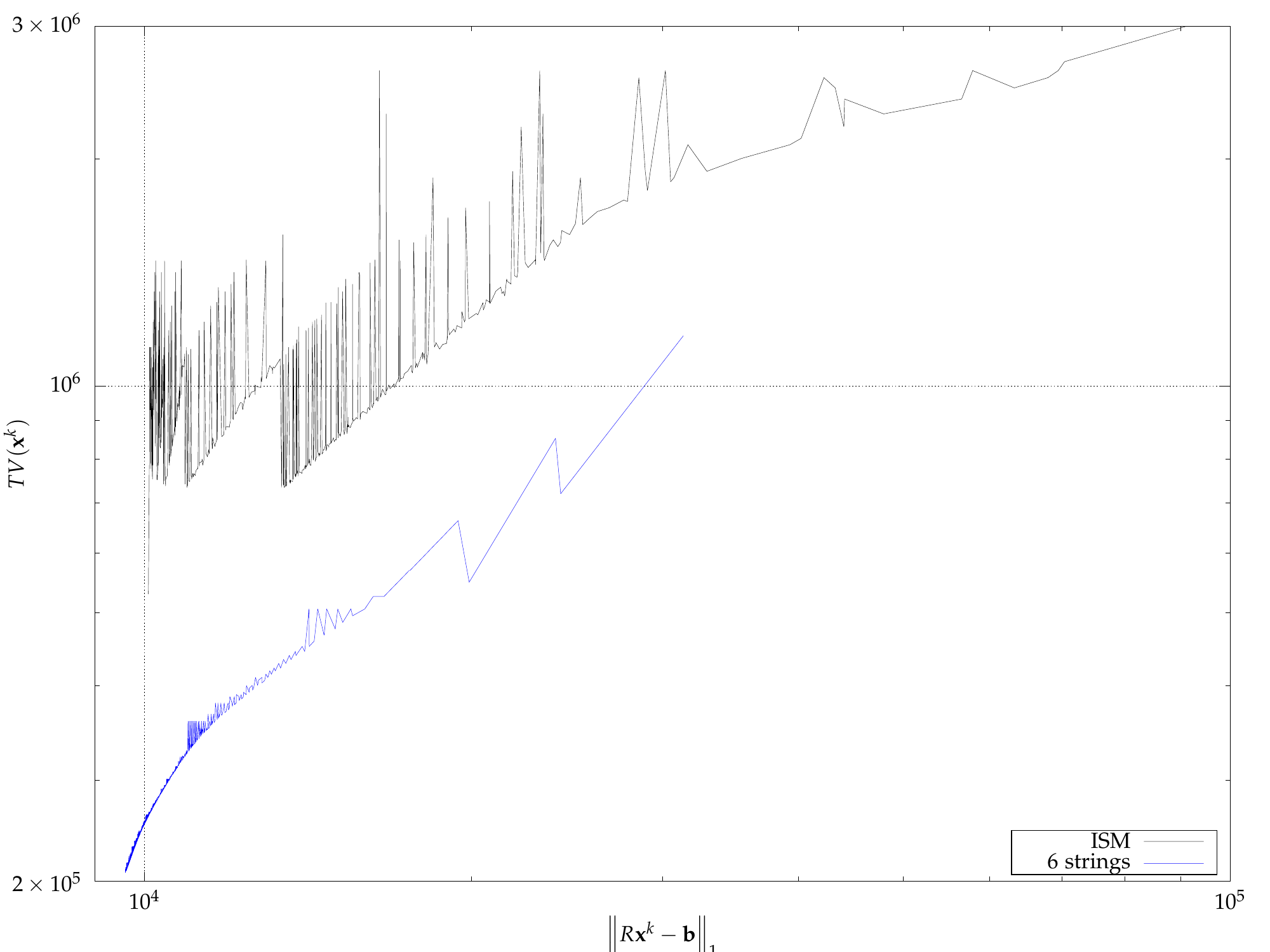}
\caption{Total variation as function of the residual $\ell_1$-norm for the experiment using eggs of fishes. 
Note that the method with 6 strings provides lower values for total variation with a lower oscillation level.} \label{ovas1}
\end{figure}

\begin{figure}[ht]
\begin{center}
\subfloat[ISM (1 string)]{
\includegraphics[width=0.46\textwidth,height=0.46\textwidth]{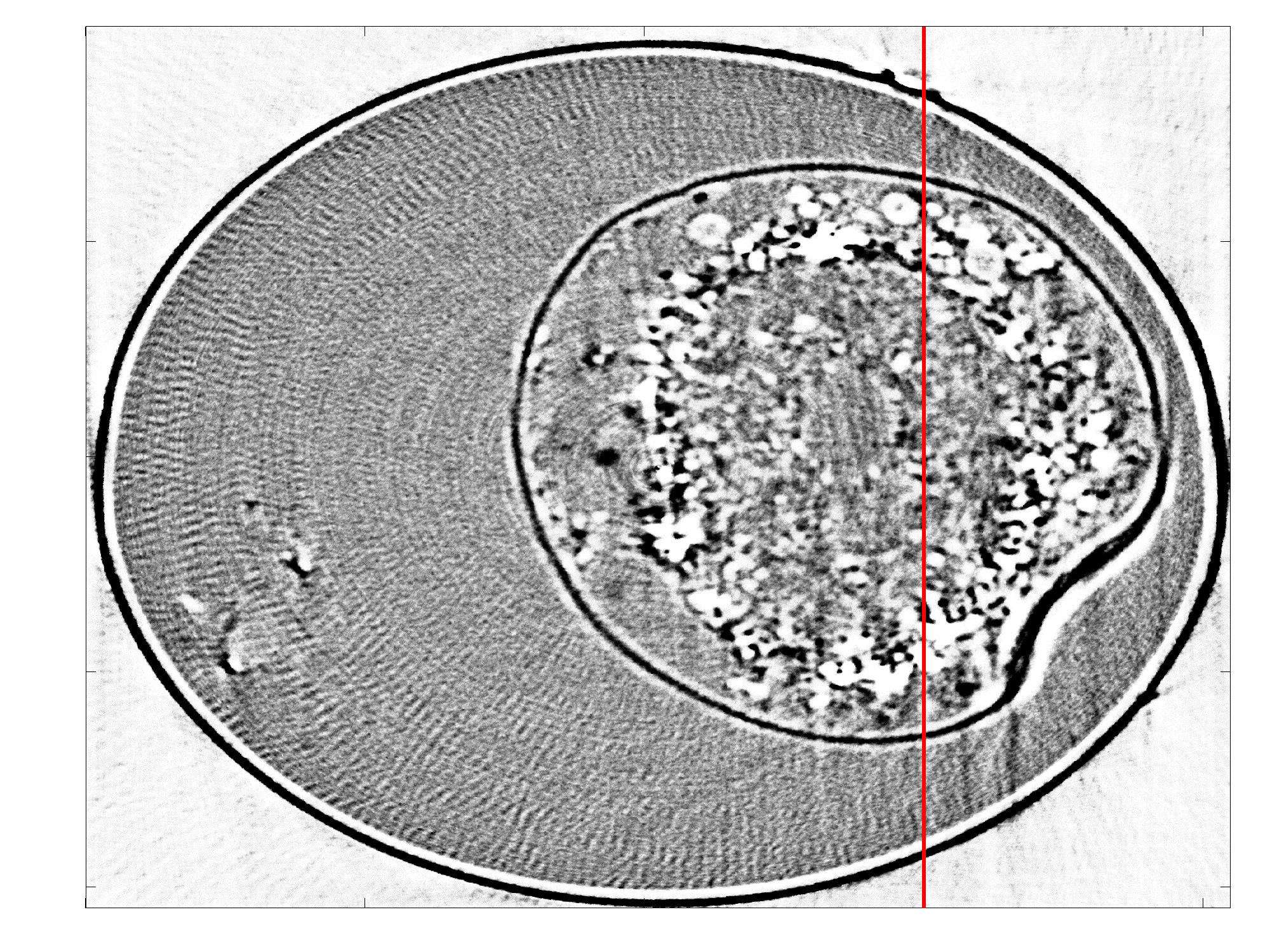}
}
\subfloat[6 strings]{
\includegraphics[width=0.46\textwidth,height=0.46\textwidth]{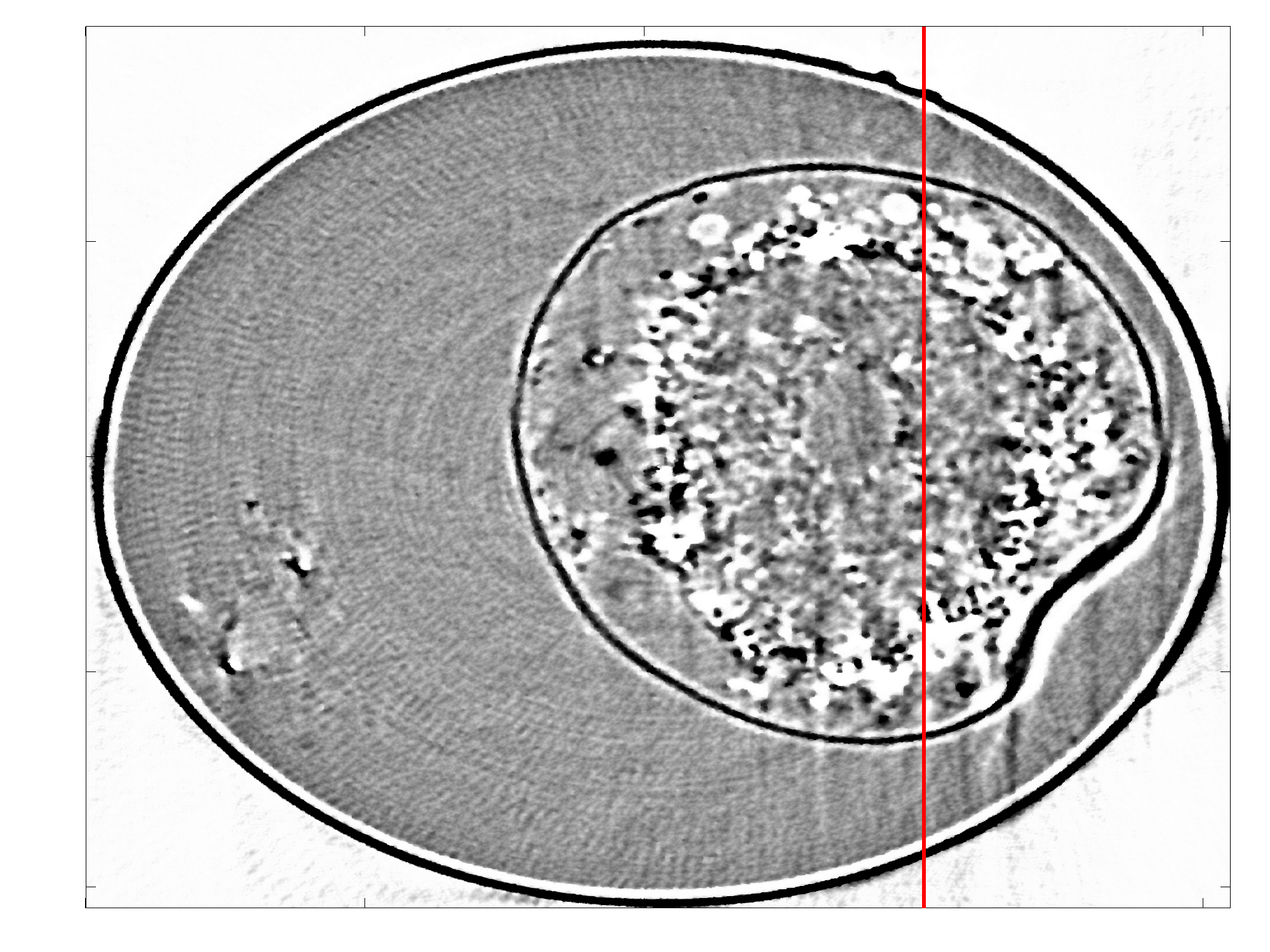}
}
\end{center}
\caption{Reconstructed images from sinogram given in Figure \ref{ovas_sinograma}. The vertical solid red lines show where the profiles of Figure \ref{profile} were taken 
from.} \label{ovas_imagens}
\end{figure}

\begin{figure}[ht!]
\centering
\includegraphics[width=0.825\textwidth]{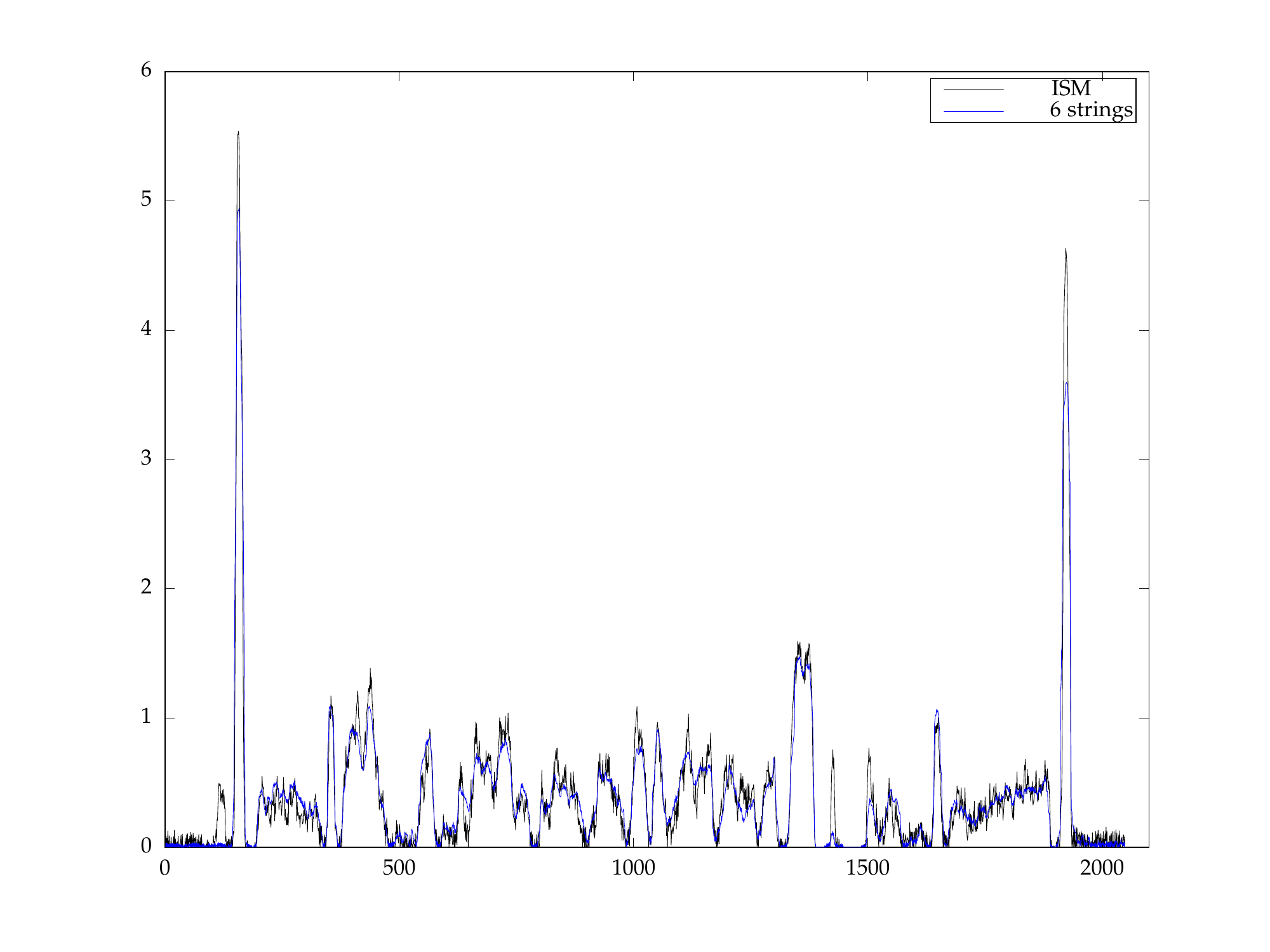}
\caption{Profile lines from images in Figure \ref{ovas_imagens}.} \label{profile}
\end{figure}

\section{Final comments} \label{sec.6}

We have presented a new String-Averaging Incremental Subgradients family of algorithms. 
The theoretical convergence analysis of the method was established and experiments were performed in order to assess the 
effectiveness of the algorithm.
The method featured a good performance in practice, being able to reconstruct images with superior quality when compared to 
classical incremental subgradient algorithms. Furthermore, algorithmic parameters selection was shown to be robust across a 
range of tomographic experiments. The discussed theory involves solving non-smooth constrained convex optimization problems and, 
in this sense, more general models can be numerically addressed by the presented method.
Future work may be related to the application of the string-averaging technique in incremental subgradient algorithms with 
stochastic errors, such as those that appear in~\cite{sundhar2009incremental}~and~\cite{sundhar2010distributed}.

\section*{Acknowledgements}

We would like to thank the LNLS for providing the beam time for the tomographic acquisition, 
obtained under proposal number 17338. We also want to thank Prof. Marcelo dos Anjos (Federal University of Amazonas) for 
kindly providing the fish egg samples used in the presented experimentation. RMO was partially funded by FAPESP Grant 
2015/10171-2. ESH was partially funded by FAPESP Grants 2013/16508-3 and 2013/07375-0 and CNPq Grant 311476/2014-7. EFC 
was funded by FAPESP under Grant 13/19380-8 and CNPq under Grant 311290/2013-2.

\bibliographystyle{acm}
\bibliography{references}

\end{document}